\newtheorem{thm}{Theorem}[section]
\newtheorem{prop}[thm]{Proposition}
\newtheorem{lem}[thm]{Lemma}
\newtheorem{cor}[thm]{Corollary}
\theoremstyle{definition}
\newtheorem{dfn}[thm]{Definition}
\newtheorem{ex}[thm]{Example}
\theoremstyle{remark}
\newtheorem{rem}[thm]{Remark}
\newtheorem{qst}[thm]{Question}
\newcommand{\R}{\ensuremath{\mathbb{R}}}
\newcommand{\F}{\mathcal{F}}
\newcommand{\X}{\mathcal{X}}
\newcommand{\cN}{\mathcal{N}}
\newcommand{\cZ}{\mathcal{Z}}
\newcommand{\cP}{\mathcal{P}}
\DeclareMathOperator{\supp}{supp}
\DeclareMathOperator{\diam}{diam}
\newcommand{\midd}{\mathrel{} \middle| \mathrel{}}
\newcommand{\kf}{d_{\mathrm{KF}}}
\newcommand{\prok}{d_{\mathrm{P}}}
\newcommand{\gp}{d_{\mathrm{GP}}}
\newcommand{\Lip}{{\mathcal{L}}ip}
\newcommand{\conc}{d_{\mathrm{conc}}}
\newcommand{\haus}{d_{\mathrm{H}}}
\newcommand{\id}{\mathrm{id}}
\newcommand{\pr}{\mathrm{pr}}
\newcommand{\Cb}{C_{\mathrm{b}}}
\newcommand{\tF}{\tau_\mathrm{F}}
\newcommand{\tK}{\tau\mathrm{K}}
\DeclareMathOperator{\OD}{ObsDiam}
\DeclareMathOperator{\Ls}{Ls}
\DeclareMathOperator{\Li}{Li}
\DeclareMathOperator{\dis}{dis}
\DeclareMathOperator{\df}{def}
\newcommand{\ep}{\varepsilon}
\newcommand{\lm}{\lambda}
\newcommand{\Lm}{\Lambda}
\renewcommand{\phi}{\varphi}
\title[Topological aspects of $\X$]{Topological aspects of the space of \\ metric measure spaces}
\author[D.~Kazukawa]{Daisuke Kazukawa}
\address{Faculty of Mathematics, Kyushu University, Fukuoka 819-0395, JAPAN}
\email{kazukawa@math.kyushu-u.ac.jp}
\author[H.~Nakajima]{Hiroki Nakajima}
\address{Mathematical Sciences Course, Ehime University, Matsuyama 790-8577, JAPAN}
\email{nakajima.hiroki.nz@ehime-u.ac.jp}
\author[T.~Shioya]{Takashi Shioya}
\address{Mathematical Institute, Tohoku University, Sendai 980-8578, JAPAN}
\email{shioya@math.tohoku.ac.jp}
\date{April 13, 2023}
\subjclass[2020]{Primary 53C23, Secondary 54B20, 54E35}
\keywords{metric measure space, box distance, concentration topology, pyramid, weak topology, hyperspace}
\thanks{This work was supported by JSPS KAKENHI Grant Number JP22K20338, JP22K13908, JP19K03459.}
\begin{document}

\begin{abstract}
Gromov introduced two distance functions, the box distance and the observable distance, on the space of isomorphism classes of metric measure spaces and developed the convergence theory of metric measure spaces.
We investigate several topological properties on the space equipped with these distance functions toward a deep understanding of convergence theory.
\end{abstract}

\maketitle

\section{Introduction}
The study of convergence of metric measure spaces is one of central topics in geometric analysis on metric measure spaces.
This study originates in that of Gromov-Hausdorff convergence/collapsing of Riemannian manifolds, which widely been developed and applied to solutions to many significant problems in geometry and topology.

Gromov introduced two fundamental concepts of distance functions, the {\it box distance function} $\square$ and the {\it observable distance function} $\conc$, on the set, say $\X$, of isomorphism classes of metric measure spaces and developed his distinctive theory in \cite{Grmv}*{Chapter 3.$\frac{1}{2}_+$}.
The box distance function is simpler and is close to a metrization of measured Gromov-Hausdorff convergence. Besides, this distance is equivalent to the Gromov-Prokhorov distance introduced by Greven-Pfaffelhuber-Winter \cite{GPW} (see \cite{Lohr}).
The topology and the convergence notion given by these distance functions are widely used as in \cites{GPW2, St}. On the other hand, the observable distance function induces a very characteristic topology, called the {\it concentration topology}, based on the concentration of measure phenomenon due to L\'evy \cite{Levy} and V.~Milman \cite{VMil} (see also \cite{Led}).
The concentration topology is effective to capture the high-dimensional aspects of spaces and admits the convergence of many sequences whose dimensions are unbounded. The study of the concentration topology has been growing in recent years.

We focus on the topological aspects of the space $\X$ with respect to these distance functions $\square$ and $\conc$. The concentration topology is coarser than the topology induced by the box distance, which is called the {\it box topology} simply in this paper.
As fundamental properties, it is known that the space $(\X, \square)$ is separable, complete, and non-compact and that $(\X, \conc)$ is also separable but is not complete.
However, other topological properties have not yet been studied.
In this paper, we investigate several topological properties toward a deep understanding of the convergence theory.

Moreover, Gromov also introduced a natural compactification, denoted by $\Pi$, of $\X$ with respect to the concentration topology at the same time, which is one of powerful tools to study the concentration topology.
The topology of this compactification is called the {\it weak topology} and each element of $\Pi$ is called a {\it pyramid} (this name comes from its definition).
The space $\Pi$ of pyramids is interested in itself because this contains many infinite-dimensional objects, for example, the (virtual) infinite-dimensional Gaussian space.
We also investigate the weak topology on $\Pi$. Not only $\Pi$ is compact, but it also has already known that $\Pi$ is metrizable by the third author \cite{MMG}.

\subsection*{Around compactness}
We study some properties of $\X$ around compactness. The space $\X$ is globally non-compact with respect to both topologies, but it is also locally non-compact.

\begin{thm}\label{no_cpt_nbd}
Any metric measure space has no compact neighborhood with respect to both the box and concentration topologies. In particular, $\X$ is not locally compact in either topology.
\end{thm}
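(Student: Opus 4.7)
The plan is to exhibit, for any neighborhood $U$ of $X$ in either topology, a sequence in $U$ with no $\conc$-convergent subsequence, which then shows that $U$ is not compact in either topology. The reductions are twofold. First, because $\conc \le \square$, every box-ball $B_\square(X,\delta)$ is contained in the corresponding concentration-ball, so every neighborhood of $X$ in either topology contains some $B_\square(X,\delta)$. Second, because $\tau_\conc$ is coarser than $\tau_\square$, any $\square$-convergent subsequence is automatically $\conc$-convergent, so ruling out a $\conc$-convergent subsequence rules out a $\square$-convergent one as well. Hence it suffices, for each $\delta > 0$, to construct a sequence $(X_n) \subseteq B_\square(X,\delta)$ with no $\conc$-convergent subsequence in $\X$.

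For the construction, fix a base point $x_0 \in X$ and a small parameter $\alpha \in (0,1)$. For each $L > 0$ set $X_{\alpha,L} := (X \sqcup \{*\},\, d_{\alpha,L},\, \mu_{\alpha,L})$, where the metric extends $d_X$ by $d_{\alpha,L}(x,*) := L + d_X(x_0,x)$ (the triangle inequality is routine) and $\mu_{\alpha,L} := (1-\alpha)\mu_X + \alpha\delta_*$. The coupling of $\mu_X$ and $\mu_{\alpha,L}$ that leaves mass $1-\alpha$ on the diagonal and transports the remaining mass $\alpha$ from $X$ onto $\{*\}$ has only this latter $\alpha$-portion off the diagonal; via the equivalence of $\square$ with the Gromov-Prokhorov distance (cited in the introduction), this yields $\square(X, X_{\alpha,L}) \le C\alpha$ for an absolute constant $C$ independent of $L$. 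On the other hand, the $1$-Lipschitz function $f(z) := \min(d_{\alpha,L}(z,*),\, L)$ is constantly $L$ on $X$ and $0$ at $*$, so $f_*\mu_{\alpha,L} = (1-\alpha)\delta_L + \alpha\delta_0$; any Borel set of $f_*\mu_{\alpha,L}$-mass at least $1-\kappa$ with $\kappa < \alpha$ must contain both atoms, giving $\OD(X_{\alpha,L}; -\kappa) \ge L$. Now fix $\alpha$ with $C\alpha < \delta$ and put $X_n := X_{\alpha,n}$: all $X_n$ lie in $B_\square(X,\delta)$, while $\OD(X_n; -\alpha/2) \ge n \to \infty$.

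To conclude, along any $\conc$-convergent sequence in $\X$ the observable diameter must remain bounded at each fixed positive scale (a standard semicontinuity property of $\OD$ under $\conc$), so the divergence $\OD(X_n; -\alpha/2) \to \infty$ precludes any $\conc$-convergent subsequence of $(X_n)$ inside $\X$. The principal technical point is the uniform-in-$L$ box-distance estimate $\square(X, X_{\alpha,L}) \le C\alpha$: it reduces to showing that a one-point mass perturbation has small Gromov-Prokhorov distance from $X$ via the coupling just described, which is immediate from the definitions. Given this, the remaining steps---the observable diameter computation and the semicontinuity argument excluding a limit in $\X$---are direct.
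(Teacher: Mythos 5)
Your proof is correct, but it takes a genuinely different route from the paper's. You handle both topologies with a single construction: attach an atom of mass $\alpha$ at distance $L$ from $X$, observe that $\square(X,X_{\alpha,L})\leq\alpha$ uniformly in $L$ (in fact Lemma~\ref{box_opt} with the diagonal coupling $(1-\alpha)(\id,\id)_*\mu_X+\alpha\,\mu_X\otimes\delta_*$ and $S$ the diagonal gives this with $C=1$, so you need not pass through $\gp$), and that $\OD(X_{\alpha,L};-\kappa)\geq L$ for $\kappa<\alpha$; the exclusion of $\conc$-convergent subsequences is then exactly the limit formula for the observable diameter (Theorem~\ref{lim_form}) combined with the monotonicity of $\OD(\cdot\,;-\kappa)$ in $\kappa$ and the finiteness of $\OD(Y;-\kappa)$ for $Y\in\X$, which is the precise form of the ``semicontinuity'' you invoke. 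The paper instead splits the two cases: for the box topology it builds an infinite $\delta$-discrete net $\dot X\times_\infty Y_n$ inside any ball (Lemma~\ref{lem:tot_bdd}), which is the stronger statement that no neighborhood is even totally bounded; for the concentration topology it passes to the compactification $\Pi$, showing via Gaussian products that no $X\in\X$ is interior to $\X$ in $\Pi$ (Corollary~\ref{no_int}) and then applying the general fact in Proposition~\ref{prop:loc_cpt}. Your argument is more unified and avoids the compactification entirely, at the cost of leaning on the Ozawa--Shioya limit formula; the paper's version buys the sharper non-precompactness statement for $\square$ and the density of $\Pi\setminus\X$, both reused later. The only points to tighten are to cite Theorem~\ref{lim_form} explicitly for the $\OD$ step and to note that $\kappa<\alpha\leq 1/2$ is needed so that a set of $f_*\mu_{\alpha,L}$-measure $>1-\kappa$ must contain both atoms.
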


For the box topology, Theorem \ref{no_cpt_nbd} implies the following fact as a corollary since $(\X, \square)$ is a Baire space through the Baire category theorem.

\begin{cor}\label{sigma_cpt}
For the box topology, $\X$ is not $\sigma$-compact.
\end{cor}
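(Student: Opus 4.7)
The plan is to derive a contradiction from the Baire category theorem, exactly as hinted in the excerpt. Since $(\X,\square)$ is separable and complete, it is a Polish space, and hence a Baire space: no countable union of closed subsets with empty interior can cover $\X$.

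First I would suppose, for contradiction, that $\X=\bigcup_{n\in\N}K_n$ for some compact subsets $K_n\subset\X$. As $(\X,\square)$ is Hausdorff, each $K_n$ is closed. The Baire category theorem therefore forces at least one $K_{n_0}$ to have non-empty interior.

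Next I would pick any point $x$ in the interior of $K_{n_0}$. Then $K_{n_0}$ itself is a compact neighborhood of $x$ in $(\X,\square)$, which directly contradicts Theorem \ref{no_cpt_nbd}. This contradiction shows that no such covering by countably many compacta can exist, proving the corollary.

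There is really no genuine obstacle here; the argument is the standard Baire-category deduction that a Baire space with no compact neighborhoods fails to be $\sigma$-compact. The only point worth emphasizing is that completeness of $(\X,\square)$, already recorded in the introduction, is what makes the Baire property available. The same scheme would not apply to the concentration topology, since $(\X,\conc)$ is not complete, which is presumably why the corollary is stated only for $\square$.
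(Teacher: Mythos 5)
Your argument is correct and coincides with the paper's: the authors also invoke the Baire property of the complete metric space $(\X,\square)$ to force some compact set in a countable cover to have nonempty interior, yielding a compact neighborhood that contradicts Theorem \ref{no_cpt_nbd} (they package this as Proposition \ref{prop:sigma-cpt}). No gaps.
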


On the other hand, since $(\X, \conc)$ is incomplete, for the concentration topology, the above argument is not applied. Actually, we obtain the new fact that the concentration topology is non-Baire.

\begin{thm}\label{conc_baire}
For the concentration topology, $\X$ is not a Baire space. In particular, $\X$ is not completely metrizable.
\end{thm}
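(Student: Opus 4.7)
My approach is to establish non-Baire by combining Theorem~\ref{no_cpt_nbd} with the $\sigma$-compactness of $(\mathcal{X}, \conc)$. Here is the skeleton. Assuming one can write $\mathcal{X} = \bigcup_{n \ge 1} K_n$ with each $K_n$ being $\conc$-compact, each $K_n$ is then $\conc$-closed (compact subsets of Hausdorff spaces are closed), and by Theorem~\ref{no_cpt_nbd}, $K_n$ has empty $\conc$-interior --- otherwise $K_n$ would provide a $\conc$-compact neighborhood of any interior point, contradicting Theorem~\ref{no_cpt_nbd}. Therefore $\mathcal{X}$ is meager in itself, and so $(\mathcal{X}, \conc)$ is not a Baire space. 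The ``in particular'' clause follows because every completely metrizable space is Baire by the classical Baire category theorem, so the negation of Baire rules out complete metrizability.

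For the $\sigma$-compactness, I plan to exploit the compactification $\Pi$ of $\mathcal{X}$ by pyramids, which is compact and metrizable by the third author's earlier work. Since the inclusion $\mathcal{X} \hookrightarrow \Pi$ is a topological embedding onto a dense subset, any closed subset of $\Pi$ contained in $\mathcal{X}$ is automatically compact in $\Pi$ and hence $\conc$-compact. It therefore suffices to realize $\mathcal{X}$ as an $F_{\sigma}$ subset of $\Pi$. I would do so by expressing the principal-pyramid condition ``$\mathcal{P}$ has a maximum in the Lipschitz order'' as a countable union of closed conditions on $\Pi$, using observable-diameter data $\OD(\mathcal{P}; -\kappa)$ at countably many scales $\kappa$ together with a tightness condition that prevents escape to non-principal pyramids.

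The main obstacle is precisely this $\sigma$-compactness step, which is genuinely delicate. Naive observable-diameter bounds are insufficient: the Gromov-type criterion $\sup_{X} \OD(X; -\kappa) < \infty$ characterizes precompactness in $\Pi$ but not in $(\mathcal{X}, \conc)$, since sequences like $\{S^{n}(\sqrt{n})\}_{n}$ satisfy the bound uniformly yet converge in $\Pi$ to the Gaussian pyramid, which is a point of $\Pi \setminus \mathcal{X}$. The compact pieces $K_n$ must therefore be cut out by strictly finer data, combining observable-diameter bounds with uniform tightness of the concentration function so as to force $\Pi$-limits of members of $K_n$ to remain principal. Once this technical step is carried out, Theorem~\ref{no_cpt_nbd} and the fact that complete metrizability implies Baire conclude the proof of Theorem~\ref{conc_baire}.
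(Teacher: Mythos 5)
Your argument has a genuine gap, and it is located exactly where you flag the ``main obstacle'': the $\sigma$-compactness of $(\X, \conc)$. This is not merely a delicate technical step that remains to be written out --- it is an open problem. The paper lists $\sigma$-compactness of $(\X,\conc)$ as ``Unknown'' in its summary table, and its final section poses precisely your reduction as Question 8.1: ``Is $\X$ with the concentration topology $\sigma$-compact? Equivalently, is $\Pi\setminus\X$ a $G_\delta$ subset of $\Pi$?'' Your observation that a $\conc$-compact piece is closed in $\Pi$ and conversely is correct, so your plan of realizing $\X$ as an $F_\sigma$ subset of $\Pi$ is exactly equivalent to the open question; and as you yourself note, the natural observable-diameter criteria detect precompactness in $\Pi$ rather than in $\X$, so no concrete construction of the pieces $K_n$ is given. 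The surrounding logic (Hausdorff $+$ $\sigma$-compact $+$ no point has a compact neighborhood $\Rightarrow$ not Baire, via the contrapositive of Proposition \ref{prop:sigma-cpt}, and Baire category for the ``in particular'' clause) is fine, but the proof reduces the theorem to a statement the authors themselves cannot establish.

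The paper's actual proof sidesteps $\sigma$-compactness entirely by exhibiting an explicit meager decomposition into \emph{closed, nowhere dense} (not compact) sets. Set $\X^\delta := \{X \in \X \mid \square(X,*) \le \delta\}$, so that $\X = \bigcup_{n\ge 1}\X^{1-1/n}$. Each $\X^\delta$ is $\conc$-closed because $\square(\cdot,*)$ is lower semicontinuous under concentration (Proposition \ref{lsc_box}, via Proposition \ref{mmg6.2} and Corollary \ref{box_1pt}), and it has empty $\conc$-interior because for any $X \in \X^\delta$ the products $X \times_p S^n(1)$ concentrate to $X$ while $\square(X \times_p S^n(1), *) \ge \square(S^n(1),*) \to 1$. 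This gives that $\X$ is meager in itself, hence not Baire. If you want to salvage your approach, you should replace the $\sigma$-compactness input with some such explicit closed exhaustion; the invariant $\square(\cdot,*)$ is the tool the paper uses for this.
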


In \cite{BCZ}*{Question 9.1}, it is asked if the Gromov-Hausdorff space is homeomorphic to the space $l^2$. The answer of the analogous question to $(\X, \conc)$ is negative because $l^2$ is a Baire space.

\subsection*{Around connectivity}
We next study some properties of $\X$ and $\Pi$ around connectivity. The following theorem is very clear as a global property.

\begin{thm}\label{contra}
For both the box and concentration topologies, $\X$ is contractible. Moreover, $\Pi$ is contractible in the weak topology.
In particular, all of them are path connected and simply connected.
\end{thm}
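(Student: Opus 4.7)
The plan is to construct explicit contractions through one-parameter families of metric transformations.

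For $\X$, metric scaling suffices. Given $X = (X, d_X, \mu_X) \in \X$ and $t \in [0, 1]$, set $tX := (X, t \cdot d_X, \mu_X)$; this is well-defined on isomorphism classes, with $1 \cdot X = X$ and $0 \cdot X \cong *$. Let $H(X, t) := tX$. Joint continuity with respect to the box topology follows by combining $\square(tX, tY) \leq \square(X, Y)$ (any admissible coupling for $(X, Y)$ yields one for $(tX, tY)$, with distortion scaled by $t \leq 1$) and $\square(tX, sX) \to 0$ as $|t - s| \to 0$ for each fixed $X$ (by tightness of $\mu_X$, on a bounded set of mass $\geq 1 - \ep$ the distortion is bounded by $|t - s| \cdot \diam$). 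A triangle-inequality argument then yields joint continuity. Analogous estimates in terms of Ky-Fan distances between Lipschitz-function pullbacks give continuity in the concentration topology. Since $H(\cdot, 1) = \id_\X$ and $H(\cdot, 0) \equiv *$, this contracts $\X$ to $*$.

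For $\Pi$, the naive scaling $P \mapsto \{tX : X \in P\}$ fails to be continuous at $t = 0$: for instance, if $P$ is the pyramid of all two-point equal-mass spaces (closed in $(\X,\square)$ since unbounded-diameter sequences do not $\square$-converge), then $\{tX : X \in P\} = P$ for every $t > 0$, whereas the expected limit at $t = 0$ is $\{*\}$. Instead, I would use a truncation-based contraction. Fix a continuous increasing $g : [0, 1] \to [0, \infty]$ with $g(0) = 0$ and $g(1) = \infty$, say $g(t) = t/(1 - t)$; for $r \in [0, \infty]$ set $X^{(r)} := (X, \min(d_X, r), \mu_X)$ (with $X^{(\infty)} := X$), which defines a metric by a case analysis of the triangle inequality. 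Define $\tilde H(P, t) := \{X^{(g(t))} : X \in P\}$. Checking that $\tilde H(P, t)$ is a pyramid: downward closure follows because if $W \preceq X^{(g(t))}$ with $X \in P$, then $\diam W \leq g(t)$ and $W \preceq X \in P$ (through the canonical $1$-Lipschitz identity $X \to X^{(g(t))}$), so $W = W^{(g(t))} \in \tilde H(P, t)$; directedness is inherited from $P$ via the Lipschitz-monotonicity of truncation; closedness in $(\X, \square)$ follows from the $1$-Lipschitz property of $X \mapsto X^{(r)}$ with respect to $\square$, together with the uniform bound $\square(X^{(r)}, *) \leq r$ valid for every $X$, which confines the image to a relatively well-behaved region of $\X$.

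The main obstacle is joint continuity of $\tilde H$ in the weak topology on $\Pi$. Characterizing weak convergence via Kuratowski convergence in $(\X, \square)$, it suffices to show $\tau_{r_n}(P_n) \to \tau_r(P)$ in the Kuratowski sense whenever $P_n \to P$ weakly and $r_n \to r \in [0, \infty]$, where $\tau_r := (\cdot)^{(r)}$. This reduces to continuity of $\tau_r$ on $\X$ plus standard stability of Kuratowski limits under jointly continuous transformations; the endpoint $t = 0$ is then handled by the uniform estimate $\square(X^{(r)}, *) \leq r$, which forces $\tilde H(P, t) \to \{*\}$ uniformly in $P \in \Pi$ as $t \to 0$, giving the required contractibility.
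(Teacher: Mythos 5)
Your construction is essentially the one the paper uses: the scaling homotopy $H(X,t)=tX$ on $\X$ (with the same two-step continuity argument, splitting $\square(t_nX_n,tX)$ via $\square(tX,tY)\le\square(X,Y)$ and continuity in $t$ for fixed $X$), and the truncation $F_t(s)=\min\{s,t/(1-t)\}$ on $\Pi$, including the observation that naive scaling of pyramids is discontinuous at $t=0$ and the uniform bound $\square(F_t(X),*)\le t/(1-t)$ at that endpoint. Your verification that $\{X^{(r)}:X\in\cP\}$ is a pyramid is correct, and in fact this set coincides with the paper's $F_t(\cP)$, since $X^{(r)}\prec X$ makes $\bigcup_{X\in\cP}\cP_{F_t(X)}$ already downward closed and $\square$-closed.

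The one genuine gap is the joint weak continuity of $(\cP,t)\mapsto F_t(\cP)$ for $t>0$, which you reduce to ``continuity of $\tau_r$ on $\X$ plus standard stability of Kuratowski limits under jointly continuous transformations.'' No such general stability principle holds: if $A_n\to A$ in the Kuratowski--Painlev\'e sense and $\tau$ is continuous, one does get $\Li\tau(A_n)\supset\tau(A)$, but the upper limit $\Ls\tau(A_n)$ can be strictly larger than $\overline{\tau(A)}$, because a sequence $x_n\in A_n$ with no convergent subsequence may still have $\tau(x_n)$ convergent --- which is exactly what truncation does, collapsing unbounded families into a $\square$-precompact region. This is precisely the point where the paper invokes a nontrivial external theorem (\cite{comts}*{Corollary 1.5}). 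Your route can be repaired without that citation, but only by exploiting the specific structure of truncation rather than a generic stability claim: since $X^{(r)}\prec X$, one has $F_{t_n}(\cP_n)\subset\cP_n$ and hence $\Ls F_{t_n}(\cP_n)\subset\Ls\cP_n=\cP$; moreover every cluster point is a $\square$-limit of spaces of diameter at most $r_n:=t_n/(1-t_n)$, so by lower semicontinuity of the diameter under $\square$-convergence it has diameter at most $r=\lim_n r_n$, equals its own $r$-truncation, and therefore lies in $F_t(\cP)$. Combined with the easy inclusion $\Li F_{t_n}(\cP_n)\supset F_t(\cP)$, which follows from $\square(X^{(r)},X^{(r')})\le|r-r'|$ and $\square(X^{(r)},Y^{(r)})\le\square(X,Y)$, this closes the argument. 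As written, however, the step you dismiss as ``standard'' is the entire difficulty of the $\Pi$ case.
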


This theorem is proved by constructing explicit deformation retractions.
On the other hand, local properties are not clear and difficult to prove.
For the box topology, we obtain a geodesic between two metric measure spaces with respect to the metric $\square$. This is one of the most important results in this paper.

\begin{thm}\label{box_geod}
For the box distance function, $\X$ is a geodesic space.
In particular, $\X$ is locally path connected in the box topology.
\end{thm}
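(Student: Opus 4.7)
The strategy is to show that for every pair $X_0, X_1 \in \X$ and every $\eta > 0$, there exists $Y \in \X$ with $\square(X_0, Y), \square(X_1, Y) \leq \square(X_0, X_1)/2 + \eta$. Together with the completeness of $(\X, \square)$ recalled in the introduction, the existence of such approximate midpoints implies via the standard dyadic construction that $(\X, \square)$ is a geodesic space, and local path-connectedness is then automatic since geodesic balls are path-connected via segments from the center. In fact the midpoint will be obtained as $Y_{1/2}$ of a Lipschitz curve $\{Y_t\}_{t \in [0,1]}$ in $\X$ with $Y_0 \cong X_0$, $Y_1 \cong X_1$, and $\square(Y_s, Y_t) \leq (t-s)(\delta + \eta)$ for $s \leq t$, where $\delta := \square(X_0, X_1)$.

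\textbf{Construction.} By the definition of $\square$ choose measure-preserving parametrizations $\phi_i \colon (I, \Lm) \to (X_i, \mu_{X_i})$, $i = 0, 1$, where $I := [0,1]$ and $\Lm$ is the Lebesgue measure, together with a subset $I_* \subset I$ with $\Lm(I \setminus I_*) \le \delta + \eta$ and $|d_{X_0}(\phi_0(u), \phi_0(v)) - d_{X_1}(\phi_1(u), \phi_1(v))| \le \delta + \eta$ for all $u, v \in I_*$. On the disjoint union $W := X_0 \sqcup X_1$ define the metric $d_W$ which restricts to $d_{X_i}$ on $X_i$ and whose cross-component value is
\[
  d_W(x_0, x_1) := \inf_{u \in I_*}\left[\, d_{X_0}(x_0, \phi_0(u)) + \frac{\delta + \eta}{2} + d_{X_1}(\phi_1(u), x_1) \,\right]
\]
for $x_0 \in X_0$, $x_1 \in X_1$. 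The balance between the cross-gap $(\delta+\eta)/2$ and the distortion bound on $I_*$ guarantees, via a short triangle-inequality chain, that no shortcut through the other component is available, so $d_W|_{X_i} = d_{X_i}$. Now fix a decreasing family of measurable subsets $A_t \subset I$ with $I_* \subset A_t$, $A_0 = I$, $A_1 = I_*$, and $\Lm(A_t) = 1 - t\Lm(I \setminus I_*)$ (for instance $A_t := I_* \cup B_t$ with $\{B_t\}$ a standard shrinking family inside $I \setminus I_*$), and define $\phi_t \colon I \to W$ by $\phi_t := \phi_0$ on $A_t$ and $\phi_t := \phi_1$ on $I \setminus A_t$. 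Set $Y_t := (W, d_W, (\phi_t)_*\Lm)$. Since $(\phi_0)_* \Lm$ is supported in $X_0 \subset W$ and equals $\mu_{X_0}$ there, $Y_0 \cong X_0$ as mm-spaces, and similarly $Y_1 \cong X_1$.

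\textbf{Key estimate and main obstacle.} For $s \le t$, the pair $(\phi_s, \phi_t)$ yields a coupling of $Y_s$ and $Y_t$ via the identity on $I$. On the subset $I_{**} := A_t \cup (I \setminus A_s)$ the two maps $\phi_s$ and $\phi_t$ agree pointwise, so the pulled-back distance functions coincide on $I_{**} \times I_{**}$ and the distortion there vanishes. Since $\Lm(I \setminus I_{**}) = (t - s)\Lm(I \setminus I_*) \le (t - s)(\delta + \eta)$, we conclude
\[
  \square(Y_s, Y_t) \le (t - s)(\delta + \eta),
\]
which completes the construction of the Lipschitz curve. The principal technical challenge is the verification that $d_W|_{X_i} = d_{X_i}$: this requires a careful triangle-inequality argument in $W$ in which the cross-gap $(\delta+\eta)/2$ appears twice (outgoing and returning) and is matched exactly against the distortion bound $\delta + \eta$ on $I_*$. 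Once this is in hand the midpoint $Y_{1/2}$ provides the required approximate midpoint, and completeness of $(\X, \square)$ then closes the argument, yielding both the geodesic property and local path-connectedness.
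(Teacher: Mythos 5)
Your proposal has two genuine gaps. The first is in the construction itself: with your choices $I_*\subset A_t$, $A_1=I_*$, and $\phi_t=\phi_0$ on $A_t$, the mass sitting over $I_*$ never leaves the copy of $X_0$ inside $W$, so the terminal space $Y_1$ carries mass at least $1-\delta-\eta$ on the $X_0$-side and is in general not mm-isomorphic to $X_1$ (it is merely within $\square$-distance $\delta+\eta$ of it); the curve never reaches its intended endpoint, and a midpoint of a curve from $X_0$ to a point only near $X_1$ is not an approximate midpoint of $X_0$ and $X_1$. If you instead let $A_t$ shrink all the way to $\emptyset$ so that $Y_1\cong X_1$, the key estimate collapses: the set where $\phi_s$ and $\phi_t$ disagree then has measure $t-s$, so the identity coupling only yields $\square(Y_s,Y_t)\le t-s$ and the curve has length about $1$ rather than $\delta$. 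This is a structural obstruction, not a bookkeeping slip: in your curve each point of $I$ is either at $\phi_0(u)\in X_0$ or has been teleported across a gap of size about $(\delta+\eta)/2$ to $\phi_1(u)\in X_1$, and the box distance charges an increment that teleports mass $m$ roughly $\min\{m,\ \text{jump size}\}$; since the total mass to be moved is $1$, these increments sum to order $1$. The paper's proof avoids exactly this by moving every point \emph{continuously}: it isometrically embeds $2X_0$ and $2X_1$ into a common Banach space $Z_n$ via the Kuratowski embedding, takes an $r_n$-subtransport plan $\pi_n$ from Strassen's theorem, and pushes $\pi_n$ forward under the midpoint map $M(x_0,x_1)=\tfrac{1}{2}(x_0+x_1)$ (patching in the deficiency); each point then travels distance at most $r_n/2$, so the Prokhorov, hence Gromov--Prokhorov, hence box distance to either endpoint is at most $r_n/2$ no matter how much mass moves.

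The second gap is in your reduction. Approximate midpoints together with completeness yield only a length (intrinsic) space; \cite{BBI}*{Theorem 2.4.16} requires \emph{exact} midpoints, and the usual upgrade from length to geodesic via Hopf--Rinow requires local compactness, which $(\X,\square)$ fails by Theorem \ref{no_cpt_nbd} of this paper. So even a correct $(\delta/2+\eta)$-midpoint construction would not finish the argument. The paper handles this by showing that its approximate midpoints $X_{\frac{1}{2},n}$ form a $\square$-precompact family (via uniform finite $\ep$-nets built from $\cN_0$, $\cN_1$, and their $M$-images) and extracting a $\square$-convergent subsequence whose limit is an exact midpoint.
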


The existence of geodesics is important and useful geometrically not only topologically.
Moreover, we prove that any two distinct spaces have uncountably many geodesics between them with respect to the box distance function (see Theorem \ref{thm:multigeod}).
Therefore any geodesic branches everywhere and the Alexandrov curvature of $\X$ is not bounded from below nor from above with respect to the box distance function.
For the concentration and weak topologies, it is difficult to obtain a geodesic at present, but it is possible to show the local path connectivity.

\begin{thm}\label{loc_path_conn}
Both $\X$ with the concentration topology and $\Pi$ with the weak topology are locally path connected.
\end{thm}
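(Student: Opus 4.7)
The strategy for both assertions is to construct explicit path-connected neighborhoods via convex interpolation of pseudo-metrics at the level of parametric representations. Every $X \in \X$ is isomorphic to $([0,1], \rho_X, \lambda)$ for some measurable pseudo-metric $\rho_X$ on the Lebesgue unit interval, where $\lambda$ is the Lebesgue measure. Given parametrizations $\rho_0, \rho_1$ of two spaces $X_0, X_1 \in \X$, the convex combination $\rho_t := (1-t)\rho_0 + t\rho_1$ is again a pseudo-metric for each $t \in [0,1]$, defining a path $t \mapsto X_t := ([0,1], \rho_t, \lambda)$ in $\X$. This path is continuous in $(\X, \square)$, hence in $(\X, \conc)$ since $\conc$ is dominated by $\square$, by standard continuity properties of parametric representations.

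The key algebraic observation is a Lipschitz interpolation principle: if $f\colon [0,1]\to\R$ is $1$-Lipschitz with respect to $\rho_0$ and $g$ is $1$-Lipschitz with respect to $\rho_1$, then $h := (1-t)f + tg$ is $1$-Lipschitz with respect to $\rho_t$, and the Ky Fan distance satisfies $\kf(f, h) = \kf(0, t(g-f)) \le \kf(f, g)$. A symmetric estimate in the reverse direction---approximating a $1$-Lipschitz function on $X_t$ by one on $X_0$---is obtained via the inf-convolution $f(s) := \inf_{u \in [0,1]} (h(u) + \rho_0(s, u))$, with the error controlled by $t\,(\rho_1-\rho_0)^+$ on a large set. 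Combined with the observable characterization of $\conc$ as a Hausdorff--Ky Fan distance between parametrized $1$-Lipschitz function families, this should yield an estimate of the form $\conc(X_0, X_t) \le C\,\conc(X_0, X_1)$ for a uniform constant $C$, provided the parametrizations are chosen (almost) optimally for $\conc$. Local path connectivity of $(\X, \conc)$ at $X_0$ then follows: given a concentration-open neighborhood $U$ of $X_0$ and a concentration-ball of radius $\ep$ inside $U$, the concentration-ball of radius $\ep/C$ consists of spaces $X_1$ that can be joined to $X_0$ by convex-interpolation paths staying inside $U$.

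For $(\Pi, \text{weak})$, the construction extends to pyramids either by density of $\X$ in $\Pi$ (via $X \mapsto \cP(X)$) combined with a diagonal approximation argument that leverages the metrizability of $(\Pi, \text{weak})$ from \cite{MMG}, or by lifting the convex interpolation directly to pyramids via their parametric description as monotone families of pseudo-metrics. In either case, one reduces local path connectivity at $\cP$ to the uniform quantitative estimate from the previous step, applied to mm-space approximants of $\cP$ and a nearby pyramid $\cQ$.

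The main obstacle is the rigorous justification of the estimate $\conc(X_0, X_t) \le C\,\conc(X_0, X_1)$, particularly in the reverse Hausdorff direction: approximating a $1$-Lipschitz function on the interpolated pseudo-metric $\rho_t$ by one on $\rho_0$ requires controlling the pseudo-metric difference $\rho_1 - \rho_0$ on a set of measure close to one, and translating this back into the Ky Fan distance between observable pushforwards. A secondary obstacle is the lift to $\Pi$, where the quantitative mm-space bounds must be preserved under weak-topology limits of pyramids; here metrizability of $\Pi$ reduces the issue to a sequential statement, but the construction of an interpolating pyramid family that varies continuously at $\cP$ still requires care.
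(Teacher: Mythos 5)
Your approach is genuinely different from the paper's, and it has a real gap at exactly the point you flag as ``the main obstacle'': the estimate $\conc(X_0,X_t)\le C\,\conc(X_0,X_1)$ is not established, and the mechanism you propose for the reverse Hausdorff inclusion cannot deliver it. The inf-convolution $f(s):=\inf_u(h(u)+\rho_0(s,u))$ is indeed $1$-Lipschitz for $\rho_0$, but the error bound $h-f\le t\,\sup_u(\rho_1-\rho_0)^+$ is a \emph{pointwise} bound on the difference of the pseudo-metrics, and $\conc$-closeness gives no control whatsoever on $\rho_1-\rho_0$ --- that is precisely the difference between $\conc$ and $\square$. Concretely, take $X_0=*$ and $X_1=S^n(1)$: then $\conc(X_0,X_1)\to 0$, yet for \emph{any} parametrizations $\rho_0\equiv 0$ and $(\rho_1-\rho_0)^+=\rho_1$ is of order $\pi/2$ on a set of nearly full measure in $[0,1]^2$, so your error term is of order $t$, not of order $\conc(X_0,X_1)$. (In this example the conclusion happens to hold, but only via measure concentration of $h$ itself, not via your bound; in general I see no reason the claimed uniform estimate is true, and note that the paper records the geodesy of $(\X,\conc)$ as an open question --- a uniform two-sided control of $\conc(X_0,X_t)$ along an explicit interpolation would come close to settling it.) A secondary issue is that the passage to $\Pi$ is only sketched in two alternative directions, neither carried out.

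The paper avoids any such quantitative estimate by arguing by contradiction and exploiting the Lipschitz order. Given $Y_n$ concentrating to $X$, Proposition \ref{mmg6.2} produces $Z_n\prec Y_n$ with $Z_n\to X$ in $\square$; Proposition \ref{monotone_path} (which is a convex interpolation of metrics, but along the $1$-Lipschitz dominating map $Y_n\to Z_n$, so that the whole path is sandwiched $Z_n\prec\gamma_n(t)\prec Y_n$ in the Lipschitz order) joins $Y_n$ to $Z_n$ by a $\square$-continuous path, and a box geodesic joins $Z_n$ to $X$. The order sandwich $\cP_{Z_n}\subset\cP_{\gamma_n(t_n)}\subset\cP_{Y_n}$ together with compactness of $\Pi$ then forces every sequence of intermediate points to converge weakly, hence in concentration, to $X$ --- which contradicts the assumed failure of local path connectedness. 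If you want to salvage your construction, the fix is to interpolate along a dominating map rather than along arbitrary parametrizations, and to replace the quantitative bound by this qualitative sandwiching argument; as written, the key inequality is unsupported and its proposed proof fails on the most basic examples of concentration.
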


As a consequence of the above discussion, we also obtain the following characteristic corollary for the weak topology.

\begin{cor}\label{Peano}
For the weak topology, $\Pi$ is a Peano space. Namely, $\Pi$ is a continuous image of the unit interval.
\end{cor}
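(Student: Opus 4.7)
The plan is to invoke the classical Hahn--Mazurkiewicz theorem, which characterizes Peano spaces (continuous images of $[0,1]$) among Hausdorff spaces as exactly those which are nonempty, compact, connected, locally connected, and metrizable. So the strategy is to check each of these four properties for $(\Pi, \text{weak topology})$, drawing on results either cited in the introduction or established earlier in the paper.

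First I would recall that $\Pi$ is already known to be compact (Gromov) and metrizable (by the third author's result \cite{MMG}); both facts are recorded above. Next, connectedness of $\Pi$ follows immediately from Theorem \ref{contra}: a contractible space is in particular path connected and hence connected. Finally, local connectedness of $\Pi$ follows from Theorem \ref{loc_path_conn}, since local path connectedness implies local connectedness (every path connected open set is connected, so every point admits a neighborhood basis of connected sets).

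With these four ingredients assembled, the conclusion is immediate by Hahn--Mazurkiewicz: there exists a continuous surjection $[0,1] \to \Pi$, i.e.\ $\Pi$ is a Peano space. I do not anticipate any genuine obstacle here, since this corollary is essentially a bookkeeping consequence of the previously established Theorems \ref{contra} and \ref{loc_path_conn}; the only care required is to cite the correct form of Hahn--Mazurkiewicz (for compact metrizable spaces) and to note that all the required hypotheses are already at hand.
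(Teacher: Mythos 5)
Your proposal is correct and is exactly the paper's argument: the authors likewise deduce the corollary from the Hahn--Mazurkiewicz theorem, taking compactness and metrizability of $(\Pi,\rho)$ from Theorem \ref{Py:thm}, connectedness from the contractibility in Theorem \ref{contra}, and local connectedness from the local path connectedness in Theorem \ref{loc_path_conn}. No gaps.
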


The topological properties of $\X$ and $\Pi$ are summarized in the table below (see {\sc Table} 1).

\begingroup
\renewcommand{\arraystretch}{1.2}
\begin{table}[ht]
\caption{}
\begin{tabular}{|c|c|c|c|}\hline
& $(\X, \square)$ & $(\X, \conc)$ & $(\Pi, \rho)$ \\\hline\hline
compact & No & No & Yes \\\hline
separable & Yes & Yes & Yes \\\hline
complete${}^\dagger$ & Yes & No & Yes \\\hline\hline
locally compact & No${}^*$ & No${}^*$ & Yes \\\hline
$\sigma$-compact & No${}^*$ & Unknown & Yes \\\hline
Baire space & Yes & No${}^*$ & Yes \\\hline
Polish & Yes & No${}^*$ & Yes \\\hline\hline
(globally) contractible & Yes${}^*$ & Yes${}^*$ & Yes${}^*$ \\\hline
locally path connected & Yes${}^*$ & Yes${}^*$ & Yes${}^*$ \\\hline
geodesic space${}^\dagger$ & Yes${}^*$ & Unknown & Unknown \\\hline
\end{tabular}
\begin{tabular}{l}
${}^*$ indicates our new results in this paper. \\
${}^\dagger$ indicates geometric properties with respect to the standard metrics.
\end{tabular}
\end{table}
\endgroup

\subsection*{Revisit the weak topology on $\Pi$}
As an application of our results, we give a reinterpretation of the weak topology using the theory of hyperspace. Here, a topological space consisting of (closed) subsets of a topological space $X$ is called a {\it hyperspace} over $X$. A pyramid in $\Pi$ is originally defined as a ($\square$-closed) subset of $\X$ satisfying certain conditions. Therefore, it is very natural to focus on the relation with the hyperspace.

Let $X$ be a Hausdorff space and let $\F(X)$ be the set of all closed subsets of $X$. The Kuratowski-Painlev\'e convergence and the Fell topology on $\F(X)$ is well-studied. The Kuratowski-Painlev\'e convergence is topological if and only if $X$ is locally compact. Here, a convergence is topological provided that there exists a topology achieving it. Moreover, in this case, the Fell topology achieves the Kuratowski-Painlev\'e convergence actually.

We now consider the space $\F(\X, \square)$. By Theorem \ref{no_cpt_nbd}, the underlying space $(\X, \square)$ is not locally compact, so that the Kuratowski-Painlev\'e convergence is not topological in this case. However, the finest topology whose convergence is weaker than the Kuratowski-Painlev\'e convergence always exists. This topology, write $\tK$, is called the {\it topologization} of the Kuratowski-Painlev\'e convergence. It follows from the general theory of hyperspace that the hyperspace $(\F(\X, \square), \tK)$ is compact, $T_1$, and sequential (in particular, it is sequentially compact). Note that $\tK$ is strictly finer than the Fell topology. We have the new interpretation that the space $\Pi$ is a subspace of the compact hyperspace $(\F(\X, \square), \tK)$.

\begin{thm}\label{emb}
The inclusion map $\Pi \ni \cP \mapsto \cP \in (\F(\X, \square), \tK)$ is a topological embedding map.
\end{thm}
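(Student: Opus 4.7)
The plan is to identify the weak topology on $\Pi$ with the Kuratowski-Painlev\'e convergence restricted to $\Pi$, and then promote the easy continuity of $\iota$ into a homeomorphism onto its image by exploiting the compactness and metrizability of $(\Pi, \rho)$. As a starting point, I would recall the standard sequential characterization of the weak topology, essentially built into the definition of the metric $\rho$: for a sequence $\{\cP_n\} \subset \Pi$ and $\cP \in \Pi$, one has $\cP_n \to \cP$ weakly if and only if $\cP = \Li \cP_n = \Ls \cP_n$ as subsets of $(\X, \square)$. In other words, the weak convergence on $\Pi$ is precisely the Kuratowski-Painlev\'e convergence in $\F(\X, \square)$ restricted to $\Pi$.

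With this identification in hand, continuity of the injection $\iota : (\Pi, \rho) \to (\F(\X, \square), \tK)$ is immediate. Since $(\Pi, \rho)$ is metrizable, sequential continuity suffices: $\cP_n \to \cP$ weakly implies $\cP_n \to \cP$ in the K-P sense, which implies $\cP_n \to \cP$ in $\tK$, the last step being the defining property of the topologization.

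For the reverse direction I would argue sequentially. Suppose $\cP_n \to \cP$ in $\tK$ with all terms in $\Pi$. By sequential compactness of $(\Pi, \rho)$, any subsequence $\{\cP_{n_k}\}$ contains a further subsequence $\{\cP_{n_{k_j}}\}$ converging weakly, and hence in K-P and in $\tK$, to some $\mathcal{Q} \in \Pi$. Since this sub-subsequence also $\tK$-converges to $\cP$ by inheritance, and a short diagonal argument (extract K-P-subsequences converging to each alleged $\tK$-limit and apply uniqueness of K-P limits in the Hausdorff space $(\X, \square)$) shows that $\tK$-sequential limits are unique, we must have $\mathcal{Q} = \cP$. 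The usual subsequence principle in the metric space $(\Pi, \rho)$ then gives $\cP_n \to \cP$ weakly. To upgrade this sequential equivalence to equality of the two topologies on $\Pi$, I would use that $\Pi$ is $\tK$-closed in $\F(\X, \square)$ (since K-P-limits of pyramids are again pyramids); then $\iota(\Pi)$, as a closed subspace of the sequential space $(\F(\X, \square), \tK)$, is itself sequential, and two sequential topologies on the same set with the same convergent sequences must coincide.

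The principal technical obstacle I anticipate is verifying that $\Pi$ is closed in $(\F(\X, \square), \tK)$, that is, that the K-P-limit of a sequence of pyramids is a pyramid: this needs the $\square$-closedness of the Lipschitz order on $\X$ together with some care concerning the hereditariness and directedness of the limit. An alternative route would be to check directly that the subspace topology on $\iota(\Pi)$ is Hausdorff, whereupon the classical fact that any continuous bijection from a compact space onto a Hausdorff space is a homeomorphism would finish the argument; but separating two distinct pyramids by disjoint $\tK$-open sets in $\F(\X, \square)$ seems no easier than establishing closedness of $\Pi$ in $\tK$.
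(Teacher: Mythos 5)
Your proposal is correct and takes essentially the same route as the paper, which simply observes that both $(\Pi,\rho)$ and $(\F(\X,\square),\tK)$ are sequential and that, on sequences of pyramids, the weak, Kuratowski-Painlev\'e, and $\tK$ convergences all coincide; your argument is an expanded version of this. Two minor remarks: uniqueness of sequential $\tK$-limits is cleanest via $\tF\subset\tK$ together with the cited fact that $\tF$-convergence of sequences over the first-countable Hausdorff space $(\X,\square)$ equals Kuratowski-Painlev\'e convergence (your subsequence-extraction justification is circular as written), and the $\tK$-closedness of $\Pi$ that you flag as the principal obstacle follows at once from the same compactness-of-$(\Pi,\rho)$ subsequence argument you already use, with no need to verify the pyramid axioms for the limit directly.
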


This shows that the weak topology on $\Pi$ is a natural compact topology induced from the hyperspace $\F(\X, \square)$.

This paper is organized as follows.
In Section \ref{sec:prelim}, we describe some definitions and prepare some fundamental tools. A reader who is familiar with them can safely skip this section.
In Section \ref{sec:cpt_nbd}, we prove Theorem \ref{no_cpt_nbd} and Corollary \ref{sigma_cpt}.
A key technique is to make metric measure spaces near a given space by the $l_p$-product.
In Section \ref{sec:Baire}, we prove Theorem \ref{conc_baire} and some related properties.
A key tool is the box distance from the one-point space which is an invariant on $\X$.
In Section \ref{sec:contra}, we prove Theorem \ref{contra} by constructing explicit deformation retractions via the metric transformation.
In Section \ref{sec:geod}, we prove Theorem \ref{box_geod}. We will show that a midpoint between two spaces is given by the limit of the sequence of explicit spaces.
In Section \ref{sec:loc_path_conn}, we prove Theorem \ref{loc_path_conn} and Corollary \ref{Peano}. For the concentration and weak topologies, it is possible to create a good continuous path in a small ball instead of geodesics.
In Section \ref{sec:hypersp}, we prove Theorem \ref{emb} and describe the relation between the weak topology and the hyperspace theory.

Following this paper, we also study the scale-change action on the space of metric measure spaces in \cite{bundle}.
In \cite{bundle}, we have discovered the following surprising facts.
\begin{itemize}
\item $\X$ is not homeomorphic to a cone over the quotient space of the scale-change action.
\item This action induces a nontrivial and locally trivial principal bundle structure on $\X \setminus \{*\}$, where $*$ is a one-point metric measure space which is only one fixed point.
\end{itemize}
Moreover, a similar statement has been obtained for the space $\Pi$ of pyramids.

\section{Preliminaries}\label{sec:prelim}
In this section, we describe the definitions and some properties of metric measure space, the box distance, the observable distance, pyramid, and the weak topology. We use most of these notions along \cite{MMG}. As for more details, we refer to \cite{MMG} and \cite{Grmv}*{Chapter 3$\frac{1}{2}_+$}.

\subsection{Metric measure spaces}
Let $(X, d_X)$ be a complete separable metric space and $\mu_X$ a Borel probability measure on $X$. We call the triple $(X, d_X, \mu_X)$ a {\it metric measure space}, or an {\it mm-space} for short. We sometimes say that $X$ is an mm-space, in which case the metric and the measure of $X$ are respectively indicated by $d_X$ and $\mu_X$.

\begin{dfn}[mm-Isomorphism]
Two mm-spaces $X$ and $Y$ are said to be {\it mm-isomorphic} to each other if there exists an isometry $f \colon \supp{\mu_X} \to \supp{\mu_Y}$ such that $f_* \mu_X = \mu_Y$, where $f_* \mu_X$ is the push-forward measure of $\mu_X$ by $f$. Such an isometry $f$ is called an {\it mm-isomorphism}. Denote by $\mathcal{X}$ the set of mm-isomorphism classes of mm-spaces.
\end{dfn}

Note that an mm-space $X$ is mm-isomorphic to $(\supp{\mu_X}, d_X , \mu_X)$. We assume that an mm-space $X$ satisfies
\begin{equation*}
X = \supp{\mu_X}
\end{equation*}
unless otherwise stated. We denote by $*$ the one-point mm-space with trivial metric and Dirac measure.

\begin{dfn}[Lipschitz order]
Let $X$ and $Y$ be two mm-spaces. We say that $X$ ({\it Lipschitz}) {\it dominates} $Y$ and write $Y \prec X$ if there exists a $1$-Lipschitz map $f \colon X \to Y$ satisfying $f_* \mu_X = \mu_Y$. We call the relation $\prec$ on $\X$ the {\it Lipschitz order}.
\end{dfn}

The Lipschitz order $\prec$ is a partial order relation on $\X$.

\subsection{Box distance and observable distance}
For a subset $A$ of a metric space $(X, d_X)$ and for a real number $r > 0$, we set
\[
U_r(A) :=  \{x \in X \mid d_X(x, A) < r\},
\]
where $d_X(x, A) := \inf_{a \in A} d_X(x, a)$.

\begin{dfn}[Prokhorov distance]
The {\it Prokhorov distance} $\prok(\mu, \nu)$ between two Borel probability measures $\mu$ and $\nu$ on a metric space $X$ is defined to be the infimum of $\varepsilon > 0$ satisfying
\[
\mu(U_\varepsilon(A)) \geq \nu(A) - \varepsilon
\]
for any Borel subset $A \subset X$.
\end{dfn}

The Prokhorov metric $\prok$ is a metrization of the weak convergence of Borel probability measures on $X$ provided that $X$ is a separable metric space.

\begin{dfn}[Ky Fan metric]
Let $(X, \mu)$ be a measure space and $(Y, d_Y)$ a metric space. For two $\mu$-measurable maps $f,g \colon X \to Y$, we define $\kf^\mu (f, g)$ to be the infimum of $\varepsilon \geq 0$ satisfying
\begin{equation*}
\mu(\{x \in X \mid d_Y(f(x),g(x)) > \varepsilon \}) \leq \varepsilon.
\end{equation*}
The function $\kf^\mu$ is a metric on the set of $\mu$-measurable maps from $X$ to $Y$ by identifying two maps if they are equal to each other $\mu$-almost everywhere. We call $\kf^\mu$ the {\it Ky Fan metric}.
\end{dfn}

\begin{lem}[\cite{MMG}*{Lemma 1.26}]\label{prok_kf}
Let $X$ be a topological space with a Borel probability measure $\mu$ and $Y$ a metric space. For any two Borel measurable maps $f, g \colon X \to Y$, we have
\begin{equation*}
\prok(f_*\mu, g_* \mu) \leq \kf^\mu (f, g).
\end{equation*}
\end{lem}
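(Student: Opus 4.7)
The plan is to unwind both definitions and reduce the inequality to elementary measure-theoretic bookkeeping. Fix any $\varepsilon > \kf^\mu(f,g)$; by the definition of the Ky Fan metric (and monotonicity of the superlevel set in $\varepsilon$), the exceptional set
\[
E := \{x \in X \mid d_Y(f(x), g(x)) > \varepsilon\}
\]
has $\mu(E) \le \varepsilon$, so off $E$ the two maps are pointwise within $\varepsilon$. I would then fix an arbitrary Borel set $A \subset Y$ and try to control $g_*\mu(A)$ by the mass of $f_*\mu$ on a small neighborhood of $A$.

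The key observation is that if $x \in g^{-1}(A) \setminus E$, then $g(x) \in A$ and $d_Y(f(x), g(x)) \le \varepsilon$, hence $d_Y(f(x), A) \le \varepsilon$. To match the strict inequality in the definition of $U_\bullet(A)$, I would first pick an auxiliary $\varepsilon' > \varepsilon$ so that $f(x) \in U_{\varepsilon'}(A)$ holds on the good part. This yields the inclusion $g^{-1}(A) \subset E \cup f^{-1}(U_{\varepsilon'}(A))$, and taking $\mu$-measure gives
\[
g_*\mu(A) \le \mu(E) + f_*\mu(U_{\varepsilon'}(A)) \le \varepsilon + f_*\mu(U_{\varepsilon'}(A)) < \varepsilon' + f_*\mu(U_{\varepsilon'}(A)).
\]
Since $A$ was arbitrary, this exhibits $\varepsilon'$ as admissible in the defining infimum of $\prok(f_*\mu, g_*\mu)$, so $\prok(f_*\mu, g_*\mu) \le \varepsilon'$.

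Letting $\varepsilon' \downarrow \varepsilon$ and then $\varepsilon \downarrow \kf^\mu(f,g)$ yields the desired bound. The argument is essentially routine, and the only subtle point worth flagging is the mismatch between the strict inequality in $U_\varepsilon(A) = \{d_Y(\cdot,A) < \varepsilon\}$ and the non-strict threshold in the Ky Fan definition, which is handled cleanly by the small inflation from $\varepsilon$ to $\varepsilon'$. I do not anticipate any real obstacle.
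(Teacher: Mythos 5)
Your proof is correct. The paper states this lemma without proof, citing \cite{MMG}*{Lemma 1.26}, and your argument --- bounding $\mu(E)\le\varepsilon$ for $\varepsilon>\kf^\mu(f,g)$, using the inclusion $g^{-1}(A)\subset E\cup f^{-1}(U_{\varepsilon'}(A))$, and inflating $\varepsilon$ to $\varepsilon'$ to reconcile the non-strict Ky Fan threshold with the open neighborhood $U_{\varepsilon'}(A)$ --- is exactly the standard argument given there.
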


\begin{dfn}[Parameter]
Let $I := [0,1)$ and let $X$ be an mm-space. A map $\varphi \colon I \to X$ is called a {\it parameter} of $X$ if $\varphi$ is a Borel measurable map such that
\begin{equation*}
\varphi_\ast \mathcal{L}^1 = \mu_X,
\end{equation*}
where $\mathcal{L}^1$ is the one-dimensional Lebesgue measure on $I$.
\end{dfn}

Note that any mm-space has a parameter (see \cite{MMG}*{Lemma 4.2}).

\begin{dfn}[Box distance]
We define the {\it box distance} $\square(X, Y)$ between two mm-spaces $X$ and $Y$ to be the infimum of $\varepsilon \geq 0$ satisfying that there exist parameters $\varphi \colon I \to X$, $\psi \colon I \to Y$, and a Borel subset $I_0 \subset I$ with $\mathcal{L}^1(I_0) \geq 1 - \varepsilon$ such that
\begin{equation*}
|d_X(\varphi(s), \varphi(t)) - d_Y(\psi(s), \psi(t))| \leq \varepsilon
\end{equation*}
for any $s,t \in I_0$.
\end{dfn}

We remark that $\square(X, Y) < 1$ for any $X, Y$ but can be as close to 1 as desired.

\begin{thm}[\cite{MMG}*{Theorem 4.10}]
The box distance function $\square$ is a complete separable metric on $\mathcal{X}$.
\end{thm}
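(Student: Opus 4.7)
The plan is to verify in turn that $\square$ is a metric, that $(\X,\square)$ is separable, and that $(\X,\square)$ is complete. Symmetry and non-negativity are immediate from the definition, and $\square(X,X)=0$ follows by taking $\varphi=\psi$ to be any parameter of $X$. The real content of the metric axioms lies in the triangle inequality together with the implication $\square(X,Y)=0\Rightarrow X\simeq Y$.

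For the triangle inequality, given $\square(X,Y)<\varepsilon_1$ and $\square(Y,Z)<\varepsilon_2$, I would pick witnessing parameters $(\varphi_1,\psi_1)$ for the first distance, matching on $I_0$, and $(\varphi_2,\psi_2)$ for the second, matching on $I_0'$. Since $\psi_1,\varphi_2\colon I\to Y$ are both parameters and $(I,\mathcal{L}^1)$, $(Y,\mu_Y)$ are standard probability spaces, the isomorphism theorem for such spaces yields a Borel measure-preserving $T\colon I\to I$ with $\varphi_2\circ T=\psi_1$ $\mathcal{L}^1$-a.e. Replacing $(\varphi_2,\psi_2)$ by $(\varphi_2\circ T,\psi_2\circ T)$ and intersecting the good sets gives parameters of $X$ and $Z$ matching on $\tilde I:=I_0\cap T^{-1}(I_0')$ with combined error $\le\varepsilon_1+\varepsilon_2$ and $\mathcal{L}^1(\tilde I)\ge 1-\varepsilon_1-\varepsilon_2$, yielding $\square(X,Z)\le\varepsilon_1+\varepsilon_2$. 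For definiteness, from parameters $\varphi_n,\psi_n$ witnessing $\square(X,Y)<\varepsilon_n\to 0$ and a countable dense set $D\subset I$, a diagonal subsequence argument makes the two distance matrices $d_X(\varphi_n(s),\varphi_n(t))$ and $d_Y(\psi_n(s),\psi_n(t))$ agree in the limit on $D\times D$; Gromov's reconstruction principle---that the law of the pairwise-distance matrix of an i.i.d.\ sequence determines the mm-space up to mm-isomorphism---then gives $X\simeq Y$.

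For separability, I would exhibit as a countable dense set the finite mm-spaces whose point set carries pairwise rational distances and rational mass weights summing to one. Given $X$ and $\varepsilon>0$, Radon-ness of $\mu_X$ produces a compact $K\subset X$ with $\mu_X(K)>1-\varepsilon$; partitioning $K$ into finitely many Borel sets of diameter $<\varepsilon$ and concentrating the restricted mass of each to a chosen point, then rescaling and passing to rational approximations of masses and distances, yields a member of the countable family within $\square$-distance comparable to $\varepsilon$ of $X$.

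For completeness, given a Cauchy sequence $\{X_n\}\subset\X$ I would extract a subsequence with $\square(X_n,X_{n+1})<2^{-n}$ and iterate the alignment trick used for the triangle inequality: inductively choose parameters $\eta_n\colon I\to X_n$ such that, for a.e.\ $(s,t)\in I\times I$, the sequence $d_{X_n}(\eta_n(s),\eta_n(t))$ is Cauchy, with the Borel--Cantelli lemma handling the accumulation of bad sets since their measures are geometrically summable. The resulting a.e.\ pointwise limit is a symmetric pseudometric $d_\infty$ on $I$; quotienting $(I,\mathcal{L}^1)$ by $\{d_\infty=0\}$, completing, and pushing forward $\mathcal{L}^1$ produces a limit mm-space $X_\infty$ with $\square(X_n,X_\infty)\to 0$ along the subsequence, which by the Cauchy property promotes to convergence of the original sequence. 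The main obstacle is precisely this simultaneous parameter alignment across all $n$: one must arrange that each successive measure-preserving reparametrization of $I$ remains compatible with the previous ones and that the bad sets governing a.e.\ convergence stay strictly away from full measure---this is exactly what the geometric decay of $\square(X_n,X_{n+1})$ buys.
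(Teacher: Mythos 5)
The paper offers no proof of this statement: it is quoted verbatim from \cite{MMG}*{Theorem 4.10}, so the only meaningful comparison is with the standard argument in that reference, whose outline (metric axioms via parameter alignment and the reconstruction theorem, separability via finite spaces with rational data, completeness via a rapidly Cauchy subsequence and an a.e.\ limit of pulled-back distance functions) your proposal follows.

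There is, however, one step that is genuinely false as stated and on which both your triangle inequality and your completeness argument lean: the claim that for two parameters $\psi_1,\varphi_2\colon I\to Y$ of the same mm-space there is a Borel measure-preserving $T\colon I\to I$ with $\varphi_2\circ T=\psi_1$ almost everywhere. The isomorphism theorem for standard probability spaces does not give this. Take $Y=([0,1),|\cdot|,\mathcal{L}^1)$, $\psi_1(x)=2x \bmod 1$ and $\varphi_2(x)=3x\bmod 1$: the conditional of $\mathcal{L}^1$ over $\varphi_2$ at $y$ consists of three atoms of mass $1/3$, while that over $\psi_1$ consists of two atoms of mass $1/2$, and no fiberwise push-forward can turn masses $\{1/3,1/3,1/3\}$ into $\{1/2,1/2\}$ or vice versa; so no exact intertwining exists in either direction. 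The correct tool is the weaker (and true) lemma that for any $\delta>0$ one can find a measure-preserving $T$ with $\kf^{\mathcal{L}^1}(\varphi_2\circ T,\psi_1)\le\delta$ (partition $Y$ into finitely many small Borel pieces and match the equal-measure preimages), or equivalently a coupling of $\mathcal{L}^1$ with itself concentrated on $\{\varphi_2(s')=\psi_1(s)\}$; since $\square$ is defined as an infimum, the extra $\delta$ costs nothing in the limit. Your completeness argument needs the same repair, with the $\delta$'s chosen summable. Separately, your definiteness argument is shakier than it looks: a countable dense $D\subset I$ is of no use because the parameters are merely Borel (so $\varphi_n(D)$ need not be dense in $X$), the good sets $I_0^{(n)}$ vary with $n$ and need not contain $D$, and the pointwise sequences $d_X(\varphi_n(s),\varphi_n(t))$ need not stabilize. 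The clean route, which is the one you in fact name, is to observe that the law of the random distance matrix of an i.i.d.\ $\mu_X$-sample is independent of the parameter, that $\square(X,Y)<\ep$ forces the two laws to be $\ep$-close in a Prokhorov sense, and then to invoke Gromov's reconstruction theorem once the laws coincide. With these two repairs the proposal becomes a correct rendering of the standard proof.
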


Various distances equivalent to the box distance are defined and studied, for example, the Gromov-Prokhorov distance introduced by Greven-Pfaffelhuber-Winter \cite{GPW}.

\begin{thm}[\cite{Lohr}*{Theorem 3.1}, \cite{MMG}*{Remark 4.16}]\label{box_gp}
For any two mm-spaces $X$ and $Y$, we have
\[
\square(X, Y) = \gp((X, 2d_X, \mu_X), (Y, 2d_Y, \mu_Y)),
\]
where $\gp(X, Y)$ is the Gromov-Prokhorov metric defined to be the infimum of $\prok(\mu_X, \mu_Y)$ for all metrics on the disjoint union of $X$ and $Y$ that are extensions of $d_X$ and $d_Y$. In particular,
\[
\gp(X, Y) \leq \square(X, Y) \leq 2\gp(X, Y).
\]
\end{thm}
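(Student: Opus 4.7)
The plan is to prove the equality $\square(X,Y) = \gp((X,2d_X,\mu_X), (Y,2d_Y,\mu_Y))$ by establishing both inequalities separately; the two-sided bound $\gp(X,Y) \leq \square(X,Y) \leq 2\gp(X,Y)$ then follows from the scaling relation $\gp(X,Y) \leq \gp(2X, 2Y) \leq 2 \gp(X,Y)$, obtained by rescaling an almost-optimal common extension by $1/2$ and noting that the Prokhorov distance scales between the factors $1$ and $2$ under doubling of the ambient metric.

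For the direction $\gp(2X, 2Y) \leq \square(X,Y)$, given $\varepsilon > \square(X,Y)$ and almost-optimal parameters $\varphi \colon I \to X$, $\psi \colon I \to Y$ together with a Borel set $I_0 \subset I$ of Lebesgue measure $\geq 1-\varepsilon$ on which $|d_X(\varphi(s),\varphi(t)) - d_Y(\psi(s),\psi(t))| \leq \varepsilon$, I would glue $X$ and $Y$ along the parameters by the explicit formula
\[
\hat{d}(x, y) := \inf_{s \in I_0}\bigl\{ 2 d_X(x, \varphi(s)) + \varepsilon + 2 d_Y(\psi(s), y) \bigr\}, \quad x \in X,\ y \in Y,
\]
extending $2d_X$ on $X$ and $2d_Y$ on $Y$. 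The main step is verifying the cross triangle inequality $\hat d(x_1, y) + \hat d(y, x_2) \geq 2 d_X(x_1, x_2)$: the two explicit $\varepsilon$-gaps in the infimum contribute $+2\varepsilon$, which exactly cancels the $-2\varepsilon$ picked up when the distortion bound on $I_0$ is used to pass from $2d_Y(\psi(s),\psi(t))$ to $2d_X(\varphi(s),\varphi(t))$, so that the lower bound $2d_X(x_1,\varphi(s)) + 2d_X(\varphi(s),\varphi(t)) + 2d_X(\varphi(t), x_2)$ falls out. Then the coupling $(\varphi, \psi)_* \mathcal L^1$ of $\mu_X, \mu_Y$ puts mass at least $1-\varepsilon$ on $\{\hat d \leq \varepsilon\}$, and Strassen's theorem yields $\prok_{\hat d}(\mu_X, \mu_Y) \leq \varepsilon$.

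For the reverse direction $\square(X, Y) \leq \gp(2X,2Y)$, given an almost-optimal metric $\hat d$ on $X \sqcup Y$ extending $2d_X, 2d_Y$ with $\prok_{\hat d}(\mu_X, \mu_Y) < \varepsilon$, Strassen's theorem supplies a coupling $\pi$ of $\mu_X, \mu_Y$ with $\pi\{\hat d > \varepsilon\} \leq \varepsilon$, and the standard parametrization of Borel probability measures on a Polish space by $I$ (compare \cite{MMG}*{Lemma 4.2}) produces Borel maps $\varphi \colon I \to X$ and $\psi \colon I \to Y$ with $(\varphi,\psi)_* \mathcal L^1 = \pi$; these are parameters of $X$ and $Y$. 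The set $I_0 := \{s \in I \mid \hat d(\varphi(s), \psi(s)) \leq \varepsilon\}$ then has measure $\geq 1-\varepsilon$, and the triangle inequality in $(X \sqcup Y, \hat d)$ gives
\[
|2d_X(\varphi(s),\varphi(t)) - 2d_Y(\psi(s),\psi(t))| \leq \hat d(\varphi(s),\psi(s)) + \hat d(\varphi(t),\psi(t)) \leq 2\varepsilon
\]
for $s, t \in I_0$, whence $\square(X,Y) \leq \varepsilon$. The main subtlety — and precisely what forces the rescaling by $2$ — is the matching between the unavoidable factor $2$ in this triangle-inequality estimate on the ``$\gp$ to $\square$'' side and the $\varepsilon$-gap that must be built into the common extension $\hat d$ on the constructive side; any other constant either breaks the triangle inequality of $\hat d$ or loses the exact identification.
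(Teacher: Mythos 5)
Your argument is correct. The paper itself gives no proof of this statement --- it is quoted from L\"ohr's Theorem 3.1 and Remark 4.16 of \cite{MMG} --- and your proof follows essentially the same route as those references: gluing the two spaces along near-optimal parameters with an explicit $\varepsilon$-gap for one inequality, and converting a near-optimal coupling back into parameters via the quadrilateral inequality for the other, with the factor $2$ entering exactly where you say it does.
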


The topology induced from the box distance has historically various names, for example, the weak-Gromov topology. However we call it simply the {\it box topology} in this paper.

The following lemma is useful to calculate the box distance.

\begin{lem}[\cite{N}*{Theorem 1.1}]\label{box_opt}
Let $X$ and $Y$ be two mm-spaces. Then
\[
\square(X, Y) = \min_{\pi \in \Pi(\mu_X, \mu_Y)} \min_{S \subset X \times Y} \max\{\dis{S}, 1-\pi(S)\},
\]
where $\Pi(\mu_X, \mu_Y)$ is the set of couplings between $\mu_X$ and $\mu_Y$, and
\[
\dis{S} := \sup{\left\{|d_X(x, x') - d_Y(y, y')| : (x, y), (x', y') \in S \right\}}
\]
for a Borel subset $S \subset X \times Y$, which is called the {\it distortion} of $S$.
\end{lem}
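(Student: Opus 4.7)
The plan is to establish the identity as two inequalities, viewing a coupling $\pi$ and a ``core'' set $S$ as reformulations of a parameter pair $(\varphi, \psi)$ together with an exceptional set $I_0$; the attainment of the minima is then upgraded by a separate compactness argument.

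For $\square(X,Y) \ge \min_\pi \min_S \max\{\dis{S}, 1 - \pi(S)\}$, I would start from parameters $\varphi \colon I \to X$, $\psi \colon I \to Y$ and a Borel set $I_0 \subset I$ with $\mathcal{L}^1(I_0) \ge 1 - \varepsilon$ witnessing $\square(X,Y) \le \varepsilon$. Setting $\Phi := (\varphi, \psi)$, the pushforward $\pi := \Phi_* \mathcal{L}^1$ lies in $\Pi(\mu_X, \mu_Y)$ because its marginals are $\varphi_* \mathcal{L}^1 = \mu_X$ and $\psi_* \mathcal{L}^1 = \mu_Y$. Taking $S := \overline{\Phi(I_0)}$, the inclusion $I_0 \subset \Phi^{-1}(S)$ yields $\pi(S) \ge \mathcal{L}^1(I_0) \ge 1 - \varepsilon$, and continuity of the metrics propagates the distortion bound from $\Phi(I_0)$ to its closure, so $\dis{S} \le \varepsilon$.

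For the reverse inequality $\square(X,Y) \le \min_\pi \min_S \max\{\dis{S}, 1 - \pi(S)\}$, I would reverse the correspondence. Given a coupling $\pi$ and a Borel set $S \subset X \times Y$ with $\max\{\dis{S}, 1 - \pi(S)\} \le \varepsilon$, I pick a parameter $\Phi \colon I \to X \times Y$ of the Polish probability space $(X \times Y, \pi)$, which exists since every mm-space admits a parameter. Decomposing $\Phi = (\varphi, \psi)$, the projections make $\varphi$ and $\psi$ parameters of $X$ and $Y$. Setting $I_0 := \Phi^{-1}(S)$, we have $\mathcal{L}^1(I_0) = \pi(S) \ge 1 - \varepsilon$, and for $s, t \in I_0$ both $\Phi(s)$ and $\Phi(t)$ lie in $S$, so the pointwise distortion condition is just the defining property of $\dis{S}$.

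The main obstacle is upgrading these infima to minima. The outer layer is standard: $\Pi(\mu_X, \mu_Y)$ is weakly compact by Prokhorov's theorem, and one expects lower semicontinuity of the inner functional in $\pi$. For the inner infimum at fixed $\pi$, the subtlety is that a naive Hausdorff limit of a near-optimal sequence $S_n$ gives $\dis{S^*} \le \liminf \dis{S_n}$ but does not immediately preserve the measure lower bound. My plan is to use inner regularity of $\pi$ to assume each $S_n$ is compact, invoke tightness of $\pi$ to trap most of each $S_n$ inside a fixed compact $K \subset X \times Y$ (losing only $\delta$ in measure), apply the Blaschke selection theorem inside $K$ to extract a Hausdorff-convergent subsequence with limit $T_\delta$ satisfying $\dis{T_\delta} \le r^*$ and $\pi(T_\delta) \ge 1 - r^* - \delta$, and finally take a diagonal limit along $\delta_k \to 0$ to produce the required minimizer $S^*$.
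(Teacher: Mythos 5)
The paper does not actually prove this lemma; it is imported verbatim from \cite{N}*{Theorem 1.1}, so there is no in-paper argument to compare against. Judged on its own terms, your two inequalities are correct and are essentially the natural dictionary between the parameter definition of $\square$ and the coupling formulation: pushing $\mathcal{L}^1$ forward by $\Phi=(\varphi,\psi)$ gives a coupling, $S:=\overline{\Phi(I_0)}$ is closed (hence Borel, sidestepping the issue that $\Phi(I_0)$ itself need only be analytic), and continuity of $(p,q)\mapsto |d_X-d_Y|$ does pass the distortion bound to the closure; conversely a parameter of the mm-space $(X\times Y, \pi)$ exists by \cite{MMG}*{Lemma 4.2}, its components are parameters of $X$ and $Y$ by the marginal condition, and $I_0:=\Phi^{-1}(S)$ works. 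So the identity with infima in place of minima is fully established by your argument.

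The attainment is where the substance lies, and your sketch has the right ingredients (inner regularity, tightness, Blaschke selection, upper semicontinuity of $\pi$ along Hausdorff limits via outer regularity) but two steps are stated too loosely to count as a proof. First, the ``diagonal limit along $\delta_k\to 0$'': the sets $T_{\delta_k}$ live in different ambient compacts and are not nested, so you cannot simply pass to a limit of them; the standard repair is to fix an increasing exhaustion $K_1\subset K_2\subset\cdots$ with $\pi(K_j)\ge 1-2^{-j}$, extract one common subsequence along which $S_n\cap K_j$ Hausdorff-converges to some $T^{(j)}$ for every $j$, observe $T^{(j)}\subset T^{(j+1)}$, and take $S^*:=\overline{\bigcup_j T^{(j)}}$, whose distortion is still $\le r^*$ because distortion commutes with increasing unions and closures. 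Second, the outer minimum: ``lower semicontinuity in $\pi$'' has to be proved jointly, i.e.\ for $\pi_n\to\pi$ weakly with near-optimal compact $S_n$ trapped in a fixed compact (uniform tightness holds because the marginals are fixed), one needs $\limsup_n \pi_n(S_n)\le \pi(T)$ for the Hausdorff limit $T$, which follows from the portmanteau inequality applied to closed $\ep$-neighborhoods of $T$; this should be said explicitly. With these two points filled in, your proof is complete and, as far as one can tell, follows the same optimal-transport route as the cited source.
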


\begin{cor}\label{box_1pt}
For any mm-space $X$,
\[
\square(X, *) = \min_{A \subset X} \max{\{\diam{A}, 1-\mu_X(A) \}}.
\]
In particular, if $Y \prec X$, then $\square(Y, *) \leq \square(X, *)$.
\end{cor}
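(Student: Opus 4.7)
The plan is to derive the formula by specializing Lemma \ref{box_opt} to the case $Y = *$, and then to transport an optimal subset along the 1-Lipschitz map that witnesses the Lipschitz order.

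First, I would observe that for the one-point space $* = \{p\}$ with $\mu_* = \delta_p$, the set $\Pi(\mu_X, \mu_*)$ has exactly one element $\pi = \mu_X \otimes \delta_p$, and every Borel subset of $X \times \{p\}$ is of the form $S = A \times \{p\}$ for some Borel $A \subset X$. Under this identification $\pi(S) = \mu_X(A)$, and
\[
\dis S = \sup\bigl\{|d_X(x,x') - d_*(p,p)| : x,x' \in A\bigr\} = \diam A.
\]
Substituting into Lemma \ref{box_opt} yields exactly the claimed identity, and the minimum is attained because Lemma \ref{box_opt} already asserts attainment of both outer and inner infima.

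For the monotonicity under $\prec$, let $f \colon X \to Y$ be a 1-Lipschitz map with $f_* \mu_X = \mu_Y$, and pick $A \subset X$ achieving the minimum in the formula for $\square(X,*)$. I would then set $B := \overline{f(A)}$. Since $f$ is continuous, $B$ is closed, hence Borel; since $f$ is 1-Lipschitz, $\diam B = \diam f(A) \leq \diam A$; and since $A \subset f^{-1}(B)$, the pushforward identity $f_*\mu_X = \mu_Y$ gives $\mu_Y(B) = \mu_X(f^{-1}(B)) \geq \mu_X(A)$. Plugging $B$ into the formula for $\square(Y,*)$ yields
\[
\square(Y,*) \leq \max\{\diam B, 1-\mu_Y(B)\} \leq \max\{\diam A, 1-\mu_X(A)\} = \square(X,*).
\]

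The only subtlety is that $f(A)$ itself need not be Borel, which is circumvented by passing to its closure; this step preserves the diameter bound and can only enlarge the measure, so no information is lost. Beyond this measurability point, the whole corollary is a routine specialization of Lemma \ref{box_opt} combined with a one-line transport argument, so I do not expect any real obstacle.
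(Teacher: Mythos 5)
Your proof is correct and follows exactly the route the paper intends: the corollary is stated without proof as an immediate specialization of Lemma \ref{box_opt} to $Y=*$, and your identification of the unique coupling, the computation $\dis(A\times\{p\})=\diam A$, and the pushforward of an optimal set along the $1$-Lipschitz map (with the closure taken to ensure Borel measurability) are all sound. Nothing further is needed.
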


Any mm-space can be approximated by a finite mm-space. Here, finite means having finitely many points.

\begin{prop}[\cite{MMG}*{Proposition 4.20}]\label{mmg4.20}
Let $X$ be an mm-space and let $\ep>0$. There exists a finite mm-space $\dot{X}$ such that $\square(X, \dot{X}) < \ep$.
\end{prop}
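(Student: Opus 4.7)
The plan is to approximate $X$ by a discrete quantization built from a finite partition of a compact approximant, and then verify the box distance estimate via the coupling formula of Lemma \ref{box_opt}.

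First I would use the fact that $(X, d_X)$ is complete and separable so that the Borel probability measure $\mu_X$ is inner regular (Ulam's theorem). Choose a compact subset $K \subset X$ with $\mu_X(K) \geq 1 - \varepsilon/3$. Since $K$ is compact and totally bounded, cover it by finitely many open balls of radius $\varepsilon/3$ centered at points $x_1, \ldots, x_n \in K$, and refine this cover into a Borel partition $A_1, \ldots, A_n$ of $K$ with $A_i \subset B(x_i, \varepsilon/3)$. Let $A_0 := X \setminus K$, so that $\mu_X(A_0) \leq \varepsilon/3$.

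Next I would define the candidate finite mm-space $\dot{X}$ to have underlying set $\{x_1, \ldots, x_n\}$ (discarding those $x_i$ with $\mu_X(A_i)=0$ if one insists on $\supp\mu_{\dot X}=\dot X$), metric inherited from $d_X$, and measure $\mu_{\dot{X}}(\{x_i\}) := \mu_X(A_i)$ for $i \geq 2$ and $\mu_{\dot{X}}(\{x_1\}) := \mu_X(A_1) + \mu_X(A_0)$, so that $\mu_{\dot X}$ is a Borel probability measure. Build a coupling $\pi \in \Pi(\mu_X, \mu_{\dot X})$ that sends each $A_i$ onto $\{x_i\}$ for $i = 1, \ldots, n$ and the leftover set $A_0$ onto $\{x_1\}$; explicitly, $\pi = \sum_{i=1}^n (\id \times c_{x_i})_*(\mu_X|_{A_i}) + (\id \times c_{x_1})_*(\mu_X|_{A_0})$, where $c_{x_i}$ denotes the constant map to $x_i$.

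Finally, take $S := \bigsqcup_{i=1}^n A_i \times \{x_i\}$. Then $\pi(S) = \mu_X(K) \geq 1 - \varepsilon/3$, and for any $(x, x_i), (x', x_j) \in S$ the triangle inequality gives
\[
|d_X(x,x') - d_{\dot X}(x_i, x_j)| \leq d_X(x, x_i) + d_X(x', x_j) < \tfrac{2\varepsilon}{3},
\]
so $\dis S \leq 2\varepsilon/3$. Lemma \ref{box_opt} then yields
\[
\square(X, \dot{X}) \leq \max\{\dis S,\ 1 - \pi(S)\} \leq \tfrac{2\varepsilon}{3} < \varepsilon.
\]

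The only mildly delicate step is the inner regularity of $\mu_X$, which requires completeness and separability of $X$; everything else is a standard partition-and-coupling argument, so I do not anticipate a serious obstacle.
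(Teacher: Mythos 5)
Your argument is correct: the partition-and-coupling construction is sound, the coupling $\pi$ has the right marginals, $\pi(S)=\mu_X(K)\ge 1-\ep/3$, and the triangle inequality gives $\dis S\le 2\ep/3$, so Lemma \ref{box_opt} indeed yields $\square(X,\dot X)<\ep$. The paper itself states this proposition as a citation to \cite{MMG}*{Proposition 4.20} without reproducing a proof, and your discretization via an $\ep$-net of a compact set of nearly full measure is essentially the standard argument given there.
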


Given an mm-space $X$ and a parameter $\varphi \colon I \to X$ of $X$, we set
\[
\varphi^* \Lip_1(X) := \{ f \circ \varphi \mid f \colon X \to \R \text{ is $1$-Lipschitz} \},
\]
which consists of Borel measurable functions on $I$.

\begin{dfn}[Observable distance]
We define the {\it observable distance} $\conc(X, Y)$ between two mm-spaces $X$ and $Y$ by
\begin{equation*}
\conc(X, Y) := \inf_{\varphi, \psi} \haus(\varphi^* \Lip_1(X), \psi^* \Lip_1(Y)),
\end{equation*}
where $\varphi \colon I \to X$ and $\psi \colon I \to Y$ run over all parameters of $X$ and $Y$ respectively, and $\haus$ is the Hausdorff distance with respect to the metric $\kf^{\mathcal{L}^1}$.
\end{dfn}

\begin{thm}[\cite{MMG}*{Proposition 5.5 and Theorem 5.13}]
The observable distance function $\conc$ is a metric on $\mathcal{X}$. Moreover, for any two mm-spaces $X$ and $Y$,
\[
\conc(X, Y) \leq \square(X, Y).
\]
\end{thm}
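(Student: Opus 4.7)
My plan breaks into two independent parts: establishing the bound $\conc(X,Y) \leq \square(X,Y)$, which is the concrete computational step, and verifying that $\conc$ satisfies the metric axioms.

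For the inequality, I would start from parameters realizing the box distance and manufacture, for every $1$-Lipschitz function on $X$, a Ky Fan-close pullback on $Y$. Fix $\varepsilon > \square(X,Y)$ and choose parameters $\varphi \colon I \to X$ and $\psi \colon I \to Y$ together with a Borel set $I_0 \subset I$ of Lebesgue measure at least $1-\varepsilon$ with $|d_X(\varphi(s),\varphi(t)) - d_Y(\psi(s),\psi(t))| \leq \varepsilon$ for all $s,t \in I_0$. Given any $f \in \Lip_1(X)$, I would define $g \colon Y \to \R$ by the McShane--Whitney-type formula
\[
g(y) := \inf_{t \in I_0} \bigl( f(\varphi(t)) + d_Y(y, \psi(t)) \bigr),
\]
which is automatically $1$-Lipschitz. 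The bound $g(\psi(s)) \leq f(\varphi(s))$ for $s \in I_0$ is immediate by choosing $t=s$, and the reverse inequality $g(\psi(s)) \geq f(\varphi(s)) - \varepsilon$ follows by combining the $1$-Lipschitz property of $f$ with the distortion bound on $I_0$. Thus $|f\circ\varphi - g\circ\psi| \leq \varepsilon$ on $I_0$, so $\kf^{\mathcal{L}^1}(f\circ\varphi, g\circ\psi) \leq \varepsilon$. The symmetric construction transfers functions in the other direction, yielding $\haus(\varphi^*\Lip_1(X), \psi^*\Lip_1(Y)) \leq \varepsilon$; passing to the infimum over $\varepsilon$ finishes this part.

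For the metric properties, symmetry and non-negativity are built into the definition. The triangle inequality requires a reparametrization argument: given parameter pairs $(\varphi_1,\psi_1)$ and $(\psi_2,\chi_2)$ almost realizing $\conc(X,Y)$ and $\conc(Y,Z)$ respectively, I would invoke the fact that any two parameters of the same mm-space can be aligned by a measure-preserving transformation $T \colon I \to I$, so that $\psi_1 = \psi_2 \circ T$ almost everywhere. Since precomposition with such a $T$ is an isometry of the space of $\mathcal{L}^1$-measurable maps equipped with $\kf^{\mathcal{L}^1}$, replacing $\varphi_1$ by $\varphi_1 \circ T^{-1}$ aligns the $Y$-parameters without changing the Hausdorff distance, and the ordinary triangle inequality for $\haus$ closes the loop. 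Definiteness, namely $\conc(X,Y)=0 \Rightarrow X \cong Y$, is the genuinely nontrivial step: I would approach it through Gromov's reconstruction principle, noting that vanishing $\conc$ forces every finite tuple of $1$-Lipschitz functions on $X$ to be matched in joint distribution by $1$-Lipschitz functions on $Y$ and vice versa, which suffices to recover the mm-isomorphism class.

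The principal obstacle is definiteness, which is not purely formal and relies on the fact that an mm-space is fully determined by its $1$-Lipschitz observables. A secondary technical point is the measurable choice of the reparametrization $T$ in the triangle inequality: this is painless when the parameters generate all of $\mathcal{L}^1$, but if the pushforwards have atoms or the parameters are highly degenerate one must pass through a common atomless refinement (for instance by replacing $I$ by $I \times I$ with product measure, which does not enlarge $\varphi^*\Lip_1(X)$) before performing the alignment.
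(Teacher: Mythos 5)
First, a framing remark: the paper does not prove this statement at all --- it is imported verbatim from \cite{MMG}*{Proposition 5.5 and Theorem 5.13} --- so your proposal can only be measured against that source. Your first part, the inequality $\conc(X,Y)\le\square(X,Y)$, is correct and is essentially the argument of \cite{MMG}*{Theorem 5.13}: from parameters and a set $I_0$ witnessing $\square(X,Y)<\ep$, the McShane-type formula $g(y)=\inf_{t\in I_0}\bigl(f(\varphi(t))+d_Y(y,\psi(t))\bigr)$ produces a $1$-Lipschitz $g$ with $|f\circ\varphi-g\circ\psi|\le\ep$ on $I_0$ (one should add a line checking the infimum is finite, which follows from the distortion bound together with the $1$-Lipschitz property of $f$), and symmetry gives $\haus(\varphi^*\Lip_1(X),\psi^*\Lip_1(Y))\le\ep$.

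The genuine gap is in the triangle inequality. The ``fact'' that two parameters $\psi_1,\psi_2$ of the same mm-space $Y$ satisfy $\psi_1=\psi_2\circ T$ a.e.\ for some measure-preserving $T\colon I\to I$ is false, and your diagnosis of when it fails is backwards: the obstruction has nothing to do with atoms of $\mu_Y$, but with the conditional measures of $\mathcal{L}^1$ along the fibers of the two parameters. Take $Y=I$ with Lebesgue measure, $\psi_1=\id_I$ and $\psi_2(t)=2t\bmod 1$. If $\psi_2\circ T=\id_I$ a.e., then $A:=T^{-1}([0,1/2))$ would satisfy $\mathcal{L}^1(A\cap C)=\mathcal{L}^1(C)/2$ for every Borel $C$ (take $C=2B$ and compute $T^{-1}(B)=A\cap 2B$), which is impossible: $C=A$ forces $\mathcal{L}^1(A)=0$ while measure preservation forces $\mathcal{L}^1(A)=1/2$. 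So precisely in the ``painless'' case where $\psi_1$ generates the whole $\sigma$-algebra there may be no such $T$ in the direction you need; moreover $T^{-1}$ need not exist, since measure-preserving maps are not invertible. The standard repair --- and what \cite{MMG} actually proves --- is an \emph{approximate} alignment lemma: for every $\ep>0$ there is a Borel measure-preserving $T$ with $\kf^{\mathcal{L}^1}(\psi_2\circ T,\psi_1)\le\ep$. Precomposing one parameter pair with $T$ preserves its Hausdorff distance and costs only an additive $\ep$ when switching between $(\psi_1\circ T)^*\Lip_1(Y)$ and $\psi_2^*\Lip_1(Y)$ (because $1$-Lipschitz functions do not increase $\kf^{\mathcal{L}^1}$); letting $\ep\to0$ then closes the triangle inequality. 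Your common-atomless-refinement hedge can be made to work, but it requires Rokhlin-type disintegration and a measurable selection of fiberwise isomorphisms, none of which is supplied. Finally, definiteness is, as you say, the real content: your reduction to matching joint distributions of finite tuples of $1$-Lipschitz observables still needs (i) a compactness/tightness argument to extract limit functions on a possibly unbounded $Y$, and (ii) the passage from matched $1$-Lipschitz tuples to matched distance-matrix distributions before Gromov's reconstruction theorem applies; this occupies most of \cite{MMG}*{Section 5.2} and is not dispatched by the sentence you devote to it.
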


We call the topology on $\X$ induced from $\conc$ the {\it concentration topology}.
We say that a sequence $\{X_n\}_{n=1}^\infty$ of mm-spaces  {\it concentrates} to an mm-space $X$ if $X_n$ $\conc$-converges to $X$ as $n \to \infty$. Since the concentration topology is coarser than the box topology, $(\X, \conc)$ is separable.

\begin{ex}
Let $S^n(1)$ be the $n$-dimensional unit sphere in $\R^{n+1}$ with the standard Riemannian structure. The sequence $\{S^n(1)\}_{n=1}^\infty$ is a typical example of concentrated sequences without any $\square$-convergent subsequence (see \cite{MMG}*{Corollary 5.20} or Lemma \ref{sphere_box}). $\{S^n(1)\}_{n=1}^\infty$ concentrates to the one-point mm-space $*$ as $n \to \infty$.

Furthermore, we consider the mm-spaces
\[
X_n := \prod_{k=1}^n S^k(1), \ n=1,2,\ldots,
\]
with the natural Riemannian product structure. The sequence $\{X_n\}_{n=1}^\infty$ is $\conc$-Cauchy but does not concentrate to any mm-space (see \cite{MMG}*{Example 7.36} and \cite{KY}). In particular, $(\X, \conc)$ is not complete.
\end{ex}

Denote by $\bar{\X}$ the completion of $(\X, \conc)$.

\subsection{Pyramid}

\begin{dfn}[Pyramid] \label{Py:dfn}
A subset $\cP \subset \X$ is called a {\it pyramid} if it satisfies the following {\rm(1) -- (3)}.
\begin{enumerate}
\item If $X \in \cP$ and if $Y \prec X$, then $Y \in \cP$.
\item For any $Y, Y' \in \cP$, there exists $X \in \cP$ such that $Y \prec X$ and $Y' \prec X$.
\item $\cP$ is nonempty and $\square$-closed.
\end{enumerate}
We denote the set of all pyramids by $\Pi$. Note that Gromov's definition of a pyramid is only by (1) and (2). The condition (3) is added in \cite{MMG}.

For an mm-space $X$, we define
\begin{equation*}
\cP_X := \left\{Y \in \X \midd Y \prec X \right\},
\end{equation*}
which is a pyramid. We call $\cP_X$ the {\it pyramid associated with $X$}.
\end{dfn}

We observe that $Y \prec X$ if and only if $\cP_Y \subset \cP_X$. Note that $\X$ itself is a pyramid.

We define the weak convergence of pyramids as follows. This is exactly the Kuratowski-Painlev\'e convergence as closed subsets of $(\X, \square)$ (see Definition \ref{KPconv}).

\begin{dfn}[Weak convergence]
Let $\cP$ and $\cP_n$, $n = 1, 2, \ldots$, be pyramids. We say that $\cP_n$ {\it converges weakly to} $\cP$ as $n \to \infty$ if the following (1) and (2) are both satisfied.
\begin{enumerate}
\item For any mm-space $X \in \cP$, we have
\begin{equation*}
\lim_{n \to \infty} \square(X, \cP_n) = 0.
\end{equation*}
\item For any mm-space $X \in \X \setminus \cP$, we have
\begin{equation*}
\liminf_{n \to \infty} \square(X, \cP_n) > 0.
\end{equation*}
\end{enumerate}
\end{dfn}

\begin{thm}[\cite{MMG}*{Section 6}]\label{Py:thm}
There exists a metric $\rho$ on $\Pi$ such that the following {\rm (1) -- (4)} hold.
\begin{enumerate}
\item $\rho$ is compatible with weak convergence.
\item $\Pi$ is $\rho$-compact.
\item The map
$\iota \colon \X \ni X \mapsto \cP_X \in \Pi$
is a $1$-Lipschitz topological embedding map with respect to
$\conc$ and $\rho$.
\item $\iota(\X)$ is $\rho$-dense in $\Pi$.
\end{enumerate}
\end{thm}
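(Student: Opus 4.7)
The plan is to represent each pyramid $\cP \in \Pi$ by its $\square$-distance function $f_\cP \colon \X \to [0,1)$, $f_\cP(Z) := \square(Z, \cP)$. Since $\cP$ is $\square$-closed (Definition \ref{Py:dfn}(3)), one has $\cP = f_\cP^{-1}(0)$, so $\cP \mapsto f_\cP$ is injective, and each $f_\cP$ is 1-Lipschitz with respect to $\square$ and bounded by 1. Fixing a $\square$-dense countable sequence $\{Z_k\}_{k=1}^\infty \subset \X$ (Proposition \ref{mmg4.20}), I define
\[
\rho(\cP, \cP') := \sum_{k=1}^\infty 2^{-k} |f_\cP(Z_k) - f_{\cP'}(Z_k)|.
\]
The triangle inequality is immediate, and $\rho(\cP, \cP') = 0$ forces $f_\cP = f_{\cP'}$ on the dense set, hence everywhere by 1-Lipschitzness, hence $\cP = \cP'$, so $\rho$ is a metric.

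For item (1), equi-1-Lipschitzness of $\{f_{\cP_n}\}$ makes $\rho$-convergence equivalent to pointwise convergence $f_{\cP_n} \to f_\cP$ on all of $\X$, and I would show this in turn is equivalent to weak convergence. Pointwise convergence of distance functions immediately yields the two defining conditions of weak convergence. Conversely, given weak convergence, $\limsup f_{\cP_n}(X) \le f_\cP(X)$ follows by picking $Y \in \cP$ with $\square(X, Y)$ close to $f_\cP(X)$ and invoking condition (1) to make $\square(Y, \cP_n)$ small; the lower bound $\liminf f_{\cP_n}(X) \ge f_\cP(X)$ uses condition (2) with a contradiction argument, where the resulting near-minimizers in $\cP_n$ are controlled via the directed/down-closed structure of pyramids in lieu of local compactness. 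For (2), a diagonal argument extracts a subsequence along which $f_{\cP_n}(Z_k)$ converges for each $k$, extending by equi-continuity to a pointwise 1-Lipschitz limit $f$ on $\X$. Setting $\cP := f^{-1}(0)$, I verify it is a pyramid: $\square$-closedness is automatic from continuity of $f$; non-emptiness follows since $* \prec X$ for all $X$ implies $* \in \cP_n$ for every $n$, whence $f(*) = 0$; Lipschitz-downward closure is inherited from the $\cP_n$ via monotonicity of $\square$-distances to downward-closed sets (in the spirit of Corollary \ref{box_1pt}); and directedness of $\cP$ is obtained by combining directedness of each $\cP_n$ with a further extraction, which is the most delicate verification. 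Item (1) then identifies $f = f_\cP$, closing (2).

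For (3), the 1-Lipschitz bound $\rho(\cP_X, \cP_Y) \le \conc(X, Y)$ reduces to bounding $|\square(Z, \cP_X) - \square(Z, \cP_Y)|$ by $\conc(X, Y)$ for each $Z$; using $\square(Z, \cP_X) = \inf_{W \prec X} \square(Z, W)$, one converts a near-optimal parameter pair realising $\conc(X, Y)$ into a near-optimal coupling between elements $W \prec X$ and $W' \prec Y$ and appeals to Lemma \ref{box_opt}. Injectivity of $\iota$ is antisymmetry of the Lipschitz order, and continuity of $\iota^{-1}$ on $\iota(\X)$ reduces to the sharpened statement that weak convergence of $\cP_{X_n}$ to $\cP_X$ forces $\conc(X_n, X) \to 0$. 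For (4), I would enumerate a $\square$-dense countable subset $\{W_m\}_{m=1}^\infty$ of $\cP$ and use directedness (Definition \ref{Py:dfn}(2)) inductively to construct $Y_n \in \cP$ with $W_m \prec Y_n$ for all $m \le n$; then $\cP_{Y_n} \to \cP$ weakly because every $W_m$ lies in $\cP_{Y_n}$ eventually, while $\cP_{Y_n} \subset \cP$ preserves condition (2) of weak convergence. The main obstacle is the compactness step (2), specifically the verification that the pointwise limit $\cP = f^{-1}(0)$ is directed: the absence of local compactness of $(\X, \square)$ precludes any off-the-shelf compactness argument and forces one to exploit the directed/down-closed structure of pyramids together with tightness arguments for approximate upper bounds in finite sub-products.
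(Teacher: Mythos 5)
This theorem is not proved in the paper at all: it is quoted from \cite{MMG}*{Section 6}, and the only construction recorded here (in the remark following the theorem) is
$\rho(\cP,\cP') = \sum_k 2^{-k-2}k^{-1}\haus(\cP\cap\X(k,k),\cP'\cap\X(k,k))$,
i.e.\ a weighted Hausdorff distance over the $\square$-\emph{compact} slices $\X(k,k)$. Your construction is genuinely different, and unfortunately it runs into a gap that is not a technicality but the crux. Your $\rho$ is exactly a metrization of the Wijsman topology $\tau_{\mathrm{W}_\square}$ restricted to $\Pi$: $\rho$-convergence is pointwise convergence of $Z\mapsto\square(Z,\cP_n)$. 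Weak convergence of pyramids, however, is Kuratowski--Painlev\'e convergence in $(\X,\square)$ (Definition \ref{KPconv} and Theorem \ref{emb}). The two notions agree only up to the inequality $\liminf_n\square(X,\cP_n)\geq\square(X,\cP)$, and this lower semicontinuity is precisely what fails in general when the ambient space is not locally compact --- which $(\X,\square)$ is not, at any point (Theorem \ref{no_cpt_nbd}). Kuratowski--Painlev\'e convergence only tells you that near-minimizers $Y_k\in\cP_{n_k}$ with $\square(X,Y_k)\to c<\square(X,\cP)$ would have to cluster to produce a point of $\Ls\cP_n$ outside $\cP$; without local compactness they need not cluster at all, and no amount of down-closedness of the $\cP_{n_k}$ obviously manufactures a convergent replacement. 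Indeed, the paper explicitly records that $\tK$ is only the \emph{infimum} of the Wijsman topologies over all compatible metrics, that there is no minimum over $(\X,\square)$, and it poses as an open question (the final question of the last section) whether $\Pi$ embeds topologically into $(\F(\X,\square),\tau_{\mathrm{W}_\square})$. Your step (1) is equivalent to answering that question affirmatively, so it cannot be treated as a routine verification; as written, your $\rho$ is only known to induce a topology at least as fine as the weak topology.

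The same unproved implication undermines your compactness step. After the diagonal extraction you obtain a pointwise $1$-Lipschitz limit $f$ of the functions $f_{\cP_n}$, but a pointwise limit of distance functions on a non-locally-compact space need not be the distance function of its own zero set (its zero set can be strictly ``too small''); identifying $f$ with $f_{f^{-1}(0)}$ is again exactly the Wijsman lower bound you have not established. So even granting that $f^{-1}(0)$ is a pyramid (your directedness-via-couplings sketch is the right idea for that part), you would only conclude that $\cP_n$ converges weakly to $f^{-1}(0)$, not that $\rho(\cP_n,f^{-1}(0))\to 0$. By contrast, the construction via $\haus(\cdot\cap\X(k,k),\cdot\cap\X(k,k))$ sidesteps all of this by doing the extraction inside genuinely compact sets, where Hausdorff, Wijsman, and Kuratowski--Painlev\'e convergence all coincide; the price is the nontrivial lemma from \cite{MMG} that $\cP=\overline{\bigcup_k\cP\cap\X(k,k)}^{\,\square}$ for every pyramid. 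If you want to salvage your approach, you must either prove the Wijsman--Kuratowski equivalence on $\Pi$ (thereby settling the paper's open question) or abandon the distance-function representation in favor of the compact-slice one.
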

In particular, $(\Pi,\rho)$ is a compactification of $(\X,\conc)$. We often identify $X$ with $\cP_X$, and we say that a sequence of mm-spaces {\it converges weakly} to a pyramid if the associated pyramid converges weakly. In a minor abuse of notation, we use $\X$ as the image $\iota(\X)$ in $\Pi$.

\begin{rem}
One of constructions of the metric $\rho$ is as follows:
\[
\rho(\cP, \cP') := \sum_{k=1}^\infty \frac{1}{2^{k+2}k} \haus(\cP \cap \X(k,k), \cP'\cap \X(k,k)),
\]
where $\haus$ is the Hausdorff metric with respect to $\square$ and
\[
\X(N,R) := \left\{ (\R^N, \|\cdot\|_\infty, \mu) \midd \begin{array}{l} \mu \text{ is a Borel probability measure on } \R^N \text{ such that} \\ \supp\mu \text{ is contained in the closed $R$-ball centered at 0.} \end{array} \right\}.
\]
Note that $\X(N,R)$ is a $\square$-compact subset of $\X$.
\end{rem}

$(\Pi, \rho)$ is also a compactification of the completion $\bar{\X}$ of $(\X, \conc)$.

\begin{thm}[\cite{MMG}*{Theorem 7.27}]
The natural extension $\iota \colon \bar{\X} \to \Pi$ of the $1$-Lipschitz map $\iota \colon \X \ni X \mapsto \cP_X \in \Pi$ is a topological embedding map.
\end{thm}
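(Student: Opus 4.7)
The plan is to construct the extension using the standard uniform-extension theorem, and then verify injectivity and continuity of the inverse on the image, both reducing to one sequential comparison between $\conc$ and $\rho$. Since $\iota\colon(\X,\conc)\to(\Pi,\rho)$ is $1$-Lipschitz by Theorem~\ref{Py:thm} and $(\Pi,\rho)$ is compact, hence complete, the extension theorem for uniformly continuous maps into complete metric spaces produces a unique continuous extension $\bar\iota\colon\bar{\X}\to\Pi$, which is itself $1$-Lipschitz. Explicitly, for $\xi\in\bar{\X}$ represented by a $\conc$-Cauchy sequence $(X_n)\subset\X$, one sets $\bar\iota(\xi):=\lim_{n\to\infty}\cP_{X_n}$ in $(\Pi,\rho)$; this limit exists by completeness and is independent of the representative.

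To show $\bar\iota$ is a topological embedding, I aim to establish the sequential criterion: \emph{for any sequence $(X_n)\subset\X$ and any $\xi\in\bar{\X}$, the weak convergence $\cP_{X_n}\to\bar\iota(\xi)$ implies $\conc(X_n,\xi)\to 0$ in $\bar{\X}$.} The converse is immediate from the $1$-Lipschitz property. Granting this criterion, both desired properties follow: for injectivity, if $\bar\iota(\xi)=\bar\iota(\eta)$, pick any $X_n\to\xi$ in $\bar{\X}$, so $\cP_{X_n}\to\bar\iota(\eta)$, and the criterion forces $X_n\to\eta$, hence $\xi=\eta$ by uniqueness of limits; for continuity of $\bar\iota^{-1}$, given $\bar\iota(\xi_n)\to\bar\iota(\xi)$, approximate each $\xi_n$ by $Z_n\in\X$ with $\conc(Z_n,\xi_n)<1/n$, whence $\cP_{Z_n}\to\bar\iota(\xi)$; the criterion then gives $Z_n\to\xi$ in $\bar{\X}$, so $\conc(\xi_n,\xi)\to 0$.

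The crux of the proof is the sequential criterion. My approach exploits the explicit description of $\rho$ given in the Remark just after Theorem~\ref{Py:thm}: weak convergence $\cP_{X_n}\to\cP$ amounts to $\square$-Hausdorff convergence of the slices $\cP_{X_n}\cap\X(k,k)$ to $\cP\cap\X(k,k)$ on each compact piece $\X(k,k)$. Fixing a $\conc$-Cauchy representative $(Y_m)\subset\X$ of $\xi$, the same slice-wise convergence holds for $(\cP_{Y_m})$, so for each $k$ and large $n,m$ one selects common approximators in $\X(k,k)$ that lie, up to small $\square$-error, simultaneously in $\cP_{X_n}$ and $\cP_{Y_m}$; these furnish finite-dimensional couplings between $X_n$ and $Y_m$. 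The main obstacle is upgrading these finite-dimensional $\square$-estimates into a $\conc$-bound on $\conc(X_n,Y_m)$: one must control the tail as $k\to\infty$ by exploiting both the inequality $\conc\leq\square$ and the $\conc$-Cauchy property of $(Y_m)$, so that high-dimensional features contribute negligibly. This passage from finite-dimensional pyramid slices back to the full observable distance is the technical crux of the proof.
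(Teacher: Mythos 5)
Your reduction of the theorem to the sequential criterion (weak convergence $\cP_{X_n}\to\bar\iota(\xi)$ implies $\conc$-convergence $X_n\to\xi$ in $\bar{\X}$) is correctly organized: the extension exists by the standard uniform-continuity argument, and both injectivity and continuity of the inverse do follow from that criterion exactly as you say. The problem is that the criterion is not a lemma you can defer --- it \emph{is} the theorem. The statement that weak convergence of associated pyramids forces concentration is precisely the hard content of \cite{MMG}*{Theorem 7.27} (and already of the embedding statement for $\X$ itself in Theorem \ref{Py:thm}(3)), and your last paragraph explicitly stops at ``the technical crux'' without carrying it out. So the proposal is a correct reduction plus an unexecuted plan, not a proof.

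Moreover, the plan as sketched is unlikely to close the gap. Knowing that the slices $\cP_{X_n}\cap\X(k,k)$ and $\cP_{Y_m}\cap\X(k,k)$ are Hausdorff-close in $\square$ tells you that $X_n$ and $Y_m$ dominate nearly the same family of low-complexity spaces; it does not directly produce parameters $\varphi,\psi\colon I\to X_n,Y_m$ with $\haus(\varphi^*\Lip_1(X_n),\psi^*\Lip_1(Y_m))$ small, which is what a bound on $\conc(X_n,Y_m)$ requires by definition. Passing from ``common dominated approximators'' to a single pair of parameters aligning the full $1$-Lipschitz function classes is where the machinery of \cite{MMG} (measurements, enforcing maps, and the characterization of $\conc$-Cauchy sequences in Chapter 7) enters; the inequality $\conc\leq\square$ and the Cauchy property of $(Y_m)$ alone do not supply it. Also note that the ``tail in $k$'' is not the real difficulty --- $\rho$ already discounts large $k$ geometrically --- so identifying that as the main obstacle misplaces where the work lies. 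As it stands, the argument has a genuine gap at its central step.
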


We use $\bar{\X}$ as the image $\iota(\bar{\X})$ in $\Pi$ similar to $\X$.

The following proposition, which follows from the definition of the weak convergence directly, will be frequently used in this paper.

\begin{prop}\label{mmg6.2}
If a sequence $\{X_n\}_{n=1}^\infty$ of mm-spaces concentrates to an mm-space $X$ as $n \to \infty$, then there exists a sequence $\{Y_n\}_{n=1}^\infty$ of mm-spaces $\square$-converging to $X$ with $Y_n \prec X_n$ for every $n$.
\end{prop}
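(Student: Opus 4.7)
The plan is to reduce the statement to the definition of weak convergence of pyramids via the embedding $\iota \colon \X \to \Pi$, and then extract the approximating sequence from the condition $\square(X, \cP_{X_n}) \to 0$.

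First, I would invoke Theorem \ref{Py:thm}~(3): since $\iota$ is a continuous embedding with respect to $\conc$ and $\rho$, the hypothesis $\conc(X_n, X) \to 0$ yields $\rho(\cP_{X_n}, \cP_X) \to 0$, i.e., the associated pyramids $\cP_{X_n}$ converge weakly to $\cP_X$. Next, I would apply condition~(1) in the definition of weak convergence to the distinguished element $X \in \cP_X$ (note $X \prec X$ trivially, so $X \in \cP_X$). This immediately gives
\[
\lim_{n \to \infty} \square(X, \cP_{X_n}) = 0.
\]

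Finally, for each $n$, since $\cP_{X_n}$ is nonempty, I can choose an mm-space $Y_n \in \cP_{X_n}$ with
\[
\square(X, Y_n) \leq \square(X, \cP_{X_n}) + \frac{1}{n}.
\]
By the definition of $\cP_{X_n}$, this choice automatically satisfies $Y_n \prec X_n$, and the preceding estimate forces $\square(X, Y_n) \to 0$, which is precisely the desired $\square$-convergence.

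There is essentially no obstacle here: the entire content of the proposition is packaged in clause~(1) of the weak-convergence definition together with the continuity of $\iota$. The only thing to be slightly careful about is whether the infimum $\square(X, \cP_{X_n})$ is attained; it need not be, but since $\cP_{X_n}$ is $\square$-closed and nonempty, an $\tfrac{1}{n}$-almost minimizer always exists, and that is all we need.
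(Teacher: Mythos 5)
Your argument is correct and is precisely the route the paper intends: the paper states that the proposition ``follows from the definition of the weak convergence directly,'' and your proof simply spells this out via Theorem \ref{Py:thm} and an $\tfrac{1}{n}$-almost minimizer in $\cP_{X_n}$. No gaps; the observation that the infimum need not be attained but an approximate minimizer suffices is exactly the right level of care.
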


\begin{lem}[\cite{MMG}*{Lemma 7.14}]\label{approximation}
For any pyramid $\cP$, there exists a sequence $\{Y_m\}_{m = 1}^\infty$ of mm-spaces such that
\begin{equation*}
Y_1 \prec Y_2 \prec \cdots \prec Y_m \prec \cdots \quad \text{ and } \quad \overline{\bigcup_{m = 1}^\infty \cP_{Y_m}}^{\, \square} = \cP.
\end{equation*}
\end{lem}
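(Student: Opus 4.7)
\textbf{Proof plan for Lemma \ref{approximation}.}
The plan is to build the desired chain by combining two ingredients: separability of $(\X,\square)$ (which passes to the subspace $\cP$) and the directedness condition (2) in Definition \ref{Py:dfn}. First I would pick a countable $\square$-dense subset $\{Z_k\}_{k=1}^\infty$ of $\cP$; such a set exists because $\cP \subset (\X,\square)$ is a separable metric space. Then I would construct the sequence $\{Y_m\}$ inductively inside $\cP$: set $Y_1 := Z_1 \in \cP$, and given $Y_{m-1} \in \cP$, apply directedness to the pair $Y_{m-1}, Z_m \in \cP$ to obtain $Y_m \in \cP$ with $Y_{m-1} \prec Y_m$ and $Z_m \prec Y_m$. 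This immediately gives the Lipschitz-monotone chain $Y_1 \prec Y_2 \prec \cdots$.

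Next I would verify the equality $\overline{\bigcup_{m} \cP_{Y_m}}^{\,\square} = \cP$. For the inclusion $\subset$, I use that each $Y_m \in \cP$ and that pyramids are downward closed under $\prec$ by condition (1), so $\cP_{Y_m} \subset \cP$ for every $m$; combined with the $\square$-closedness from condition (3), the closure stays inside $\cP$. For the reverse inclusion $\supset$, fix an arbitrary $X \in \cP$ and choose a subsequence $Z_{k_n} \xrightarrow{\square} X$ from the dense set. By construction, as soon as $k_n \geq n$ (which holds for all sufficiently large $n$ after passing to a further subsequence if needed), we have $Z_{k_n} \prec Y_{k_n}$, i.e. $Z_{k_n} \in \cP_{Y_{k_n}} \subset \bigcup_m \cP_{Y_m}$. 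Hence $X$ lies in the $\square$-closure of the union.

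I do not expect a serious obstacle here; the argument is essentially standard, as pyramids are directed posets with countable cofinality whenever the ambient space is separable. The only point that requires a little care is making sure the inductive choices genuinely stay inside $\cP$ — this is exactly what directedness guarantees — and making sure the dense subset can be extracted from $\cP$ itself rather than from $\X$. Both are routine consequences of the fact that a subspace of a separable metric space is separable and of axioms (1)--(3) of Definition \ref{Py:dfn}.
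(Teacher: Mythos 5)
Your argument is correct and is essentially the standard proof of the cited result (\cite{MMG}*{Lemma 7.14}), which the paper itself does not reproduce: separability of $\cP$ as a subspace of $(\X,\square)$ plus the directedness axiom (2) yields a cofinal increasing chain, and axioms (1) and (3) together with density give the closure identity. One cosmetic remark: the subsequence extraction in your last step is unnecessary, since $Z_m \prec Y_m$ for every $m$ already places the entire dense set inside $\bigcup_{m}\cP_{Y_m}$, whence $\cP \subset \overline{\bigcup_{m}\cP_{Y_m}}^{\,\square}$ directly.
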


Such a sequence $\{Y_m\}_{m=1}^\infty$ is called an {\it approximation of $\cP$}. We see that $Y_m$ converges weakly to $\cP$ as $m \to \infty$ and that $Y_m \in \cP$ for all $m$.

\begin{ex}[Virtual infinite-dimensional Gaussian space]\label{Gauss}
Let $\lambda$ be a positive real number. The $n$-dimensional Euclidean space $(\R^n,\|\cdot\|)$ with the $n$-dimensional centered Gaussian measure $\gamma^n_{\lambda^2}$ on $\R^n$ of variance $\lambda^2$ is called the {\it $n$-dimensional Gaussian space with variance $\lambda^2$}, write $\Gamma^n_{\lambda^2}$.
The natural projections from $\R^{n+1}$ to $\R^n$, $n=1,2,\ldots$, imply
\begin{equation*}
\Gamma^1_{\lambda^2} \prec \Gamma^2_{\lambda^2} \prec \cdots \prec \Gamma^n_{\lambda^2} \prec \cdots
\end{equation*}
and $\{\Gamma^n_{\lambda^2}\}_{n=1}^\infty$ converges weakly to the pyramid
\begin{equation*}
\cP_{\Gamma^\infty_{\lambda^2}} := \overline{\bigcup_{n=1}^\infty \cP_{\Gamma^n_{\lambda^2}}}^{\, \square}
\end{equation*}
as $n \to \infty$. We call $\cP_{\Gamma^\infty_{\lambda^2}}$ the {\it virtual infinite-dimensional Gaussian space with variance $\lambda^2$}.
We remark that $\cP_{\Gamma^\infty_{\lambda^2}}$ is neither in $\X$ nor in the completion $\bar{\X}$ (see \cite{MMG}*{Corollary 7.42}).
\end{ex}

\subsection{Observable diameter and metric transformation}

The observable diameter is one of the most fundamental invariants of an mm-space and a pyramid.

\begin{dfn}[Partial and observable diameter]
Let $X$ be an mm-space. For a real number $\alpha$, we define the {\it partial diameter} $\diam(X;\alpha) = \diam(\mu_X;\alpha)$ of $X$ to be the infimum of $\diam{A}$, where $A \subset X$ runs over all Borel subsets with $\mu_X (A) \geq \alpha$ and $\diam{A}$ denotes the diameter of $A$. For a real number $\kappa > 0$, we define the {\it observable diameter} of $X$ by
\[
\OD(X;-\kappa):= \sup\left\{\diam(f_* {\mu_X};1-\kappa) \midd f \colon X \to \R \text{ is $1$-Lipschitz}\right\} (<+\infty).
\]
Moreover, for a real number $\kappa > 0$, we define the {\it observable diameter of a pyramid} $\cP$ by
\[
\OD(\cP;-\kappa):= \lim_{\delta \to 0+}\sup_{X \in \cP} \OD(X;-(\kappa+\delta)) (\leq +\infty).
\]
\end{dfn}

The observable diameter for mm-spaces is an invariant under mm-isomorphism. Note that
\[
\OD(\cP_X;-\kappa) = \OD(X;-\kappa)
\]
for any $\kappa > 0$ and that $\OD(\cP;-\kappa)$ is monotone non-increasing and right-continuous in $\kappa > 0$. Moreover, we define
\[
\OD(\cP) := \inf_{\kappa > 0} \max\{\OD(\cP;-\kappa), \kappa\}
\]
for any pyramid $\cP$ and $\OD(X) := \OD(\cP_X)$ for any mm-space $X$. It is easy to see that
\[
\OD(\cP) = \sup_{X \in \cP} \OD(X).
\]

\begin{thm}[\cite{OS}*{Theorem 1.1}, Limit formula for observable diameter]\label{lim_form}
Let $\cP$ and $\cP_n$, $n = 1, 2, \ldots$, be pyramids. If $\cP_n$ converges weakly to $\cP$ as $n \to \infty$, then
\begin{align*}
\OD(\cP;-\kappa) & = \lim_{\ep\to 0+} \liminf_{n\to\infty} \OD(\cP_n;-(\kappa+\ep)) \\
& = \lim_{\ep\to 0+} \limsup_{n\to\infty} \OD(\cP_n;-(\kappa+\ep))
\end{align*}
for any $\kappa > 0$.
\end{thm}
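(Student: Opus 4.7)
The plan is to prove two inequalities,
$$\OD(\cP;-\kappa) \leq \lim_{\ep\to 0+} \liminf_{n\to\infty} \OD(\cP_n;-(\kappa+\ep))$$
and
$$\OD(\cP;-\kappa) \geq \lim_{\ep\to 0+} \limsup_{n\to\infty} \OD(\cP_n;-(\kappa+\ep)),$$
the trivial $\liminf \leq \limsup$ then closing the chain. Throughout I exploit that $\OD(X;-\kappa')$ is non-increasing and right-continuous in $\kappa'$ (and the same for pyramids by their definition), so the outer $\ep$-limits are monotone and can be treated as suprema.

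For the lower bound, fix $X \in \cP$. The first clause of weak convergence, combined with $\square$-closedness of each $\cP_n$, provides $X_n \in \cP_n$ with $\delta_n := \square(X_n, X) \to 0$. The key subsidiary estimate is the semi-continuity
$$\liminf_{n\to\infty} \OD(X_n; -(\kappa+\ep)) \;\geq\; \OD(X; -(\kappa+\ep)),$$
which I would prove by using Theorem \ref{box_gp} to realize $X$ and $X_n$ inside a common metric space with $\prok(\mu_X, \mu_{X_n}) < \delta_n + 1/n$, extending any $1$-Lipschitz $f \colon X \to \R$ by McShane to a $1$-Lipschitz $\tilde f_n \colon X_n \to \R$, and applying a Lemma \ref{prok_kf}-style comparison to conclude that $f_* \mu_X$ and $(\tilde f_n)_* \mu_{X_n}$ lie within Prokhorov distance $O(\delta_n)$. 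This translates to an $O(\delta_n)$-shift between the relevant partial diameters, and right-continuity of $\OD(X;-\cdot)$ upgrades it to the displayed inequality. Since $\OD(X_n;-(\kappa+\ep)) \leq \OD(\cP_n;-(\kappa+\ep))$, taking $\sup_{X\in\cP}$ and then $\ep\to 0+$, and invoking the defining formula $\OD(\cP;-\kappa) = \lim_{\delta\to 0+}\sup_{X\in\cP}\OD(X;-(\kappa+\delta))$, yields the lower bound.

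For the upper bound I may assume the right-hand quantity is finite. Fix $\ep, \eta > 0$. For each $n$, choose $Y_n \in \cP_n$ and a $1$-Lipschitz $f_n \colon Y_n \to \R$ that almost saturate $\OD(\cP_n;-(\kappa+\ep))$, and pass to the one-dimensional shadow
$$Z_n := (\R, |\cdot|, (f_n)_* \mu_{Y_n}),$$
which satisfies $Z_n \prec Y_n$ via $f_n$ and hence $Z_n \in \cP_n$ by downward closure. On the real line the observable diameter is just the partial diameter of the measure, so $\OD(Z_n;-(\kappa+\ep)) \geq \OD(\cP_n;-(\kappa+\ep)) - \eta$. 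Translating each $Z_n$ so that a fixed quantile of $(f_n)_*\mu_{Y_n}$ sits at the origin, the uniform boundedness of the partial diameters makes $\{Z_n\}$ $\square$-precompact: any mass of total weight $\leq \kappa+\ep$ that tries to escape to infinity may be harmlessly truncated in the limit on $\R$. The second clause of weak convergence then forces the $\square$-limit $Z$ of a subsequence to lie in $\cP$. The reverse of the previous semi-continuity estimate gives
$$\OD(Z;-(\kappa+\ep-\eta)) \;\geq\; \limsup_{n\to\infty} \OD(Z_n;-(\kappa+\ep)) - O(\eta) \;\geq\; \limsup_{n\to\infty} \OD(\cP_n;-(\kappa+\ep)) - O(\eta),$$
and since $Z \in \cP$ with $\kappa+\ep-\eta > \kappa$, the left side is at most $\OD(\cP;-\kappa)$ by definition. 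Letting $\eta\to 0+$ and then $\ep\to 0+$ completes the upper bound.

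The main obstacle is precisely this tightness step in the upper bound: one must control the portion of $(f_n)_*\mu_{Y_n}$, of total mass up to $\kappa+\ep$, that may drift to infinity as $n\to\infty$, and ensure that after passage to a $\square$-limit on $\R$ the partial diameters do not collapse. The reduction to one-dimensional shadows $Z_n$ is essential, since it replaces a compactness question for the $Y_n$ themselves (which need not possess any $\square$-convergent subsequence, as the sphere sequence $\{S^n(1)\}$ illustrates) by a manageable quantile-centering and tightness problem on $\R$.
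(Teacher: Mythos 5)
First, a remark on the comparison you were asked for: the paper does not prove this statement at all --- it is imported verbatim from \cite{OS}*{Theorem 1.1} --- so the only question is whether your argument stands on its own. Your lower-bound half essentially does: realizing $X$ and a sequence $X_n \in \cP_n$ with $\square(X_n,X)\to 0$ in a common space via Theorem \ref{box_gp}, extending $1$-Lipschitz functions by McShane, and using that push-forward under a $1$-Lipschitz map does not increase the Prokhorov distance (this monotonicity, not Lemma \ref{prok_kf}, is the comparison you actually need, since $f_*\mu_X$ and $(\tilde f_n)_*\mu_{X_n}$ are push-forwards of \emph{different} measures) gives $\OD(X_n;-(\kappa+\ep))\geq \diam(f_*\mu_X;1-(\kappa+\ep+\delta_n'))-2\delta_n'$, and the $\ep$-limits close by right-continuity. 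That half is fine.

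The upper bound, however, has a genuine gap exactly at the step you yourself flag: the tightness of the shadows $Z_n=(\R,|\cdot|,(f_n)_*\mu_{Y_n})$. A uniform bound on $\diam(\nu_n;1-(\kappa+\ep))$ does \emph{not} make $\{Z_n\}$ $\square$-precompact after centering: for $\nu_n=(1-\kappa)\delta_0+\kappa\delta_n$ on $\R$, Lemma \ref{box_opt} shows $\square(Z_m,Z_n)\geq\kappa$ for all $m\neq n$ (every two-point subset of $Z_m\times Z_n$ has distortion $\geq 1$, so the optimal $S$ is a singleton of mass at most $1-\kappa$), even though $\diam(\nu_n;1-\kappa-\ep)=0$. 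So the escaping mass must really be removed, and your ``harmless truncation'' has to satisfy two competing constraints that you never reconcile: (i) the modified space must still lie in $\cP_n$, so that clause (2) of weak convergence can place the limit in $\cP$ --- this rules out conditioning $\nu_n$ on a bounded interval and renormalizing, which is not a Lipschitz-order domination; and (ii) it must not decrease $\diam(\cdot;1-(\kappa+\ep))$ --- this rules out the nearest-point projection onto the smallest interval carrying mass $1-(\kappa+\ep)$, because the boundary atoms of mass up to $\kappa+\ep$ created by that projection let a strictly shorter set capture mass $1-(\kappa+\ep)$, a loss by a factor on the order of $(1-2(\kappa+\ep))/(1-(\kappa+\ep))$ that does not vanish with your $\eta$. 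A correct choice does exist (project onto an interval whose endpoints are at distance much larger than $\sup_n\diam(\nu_n;1-(\kappa+\ep))$ from a near-optimal core interval, so any competitor set touching a boundary atom is forced to have huge diameter; this needs $\kappa+\ep<1/2$ and a careful case analysis), but supplying it is the actual content of the theorem, and your write-up asserts rather than proves it. Two smaller points: assuming the right-hand side finite is not a harmless reduction for the inequality $\OD(\cP;-\kappa)\geq L$ when $L=+\infty$; and for reference, the proof in \cite{OS} sidesteps tightness entirely by expressing both weak convergence and $\OD(\cP;-\kappa)$ through the $\square$-compact slices $\cP\cap\X(1,R)$ and Hausdorff convergence there.
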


\begin{thm}[\cite{OS}*{Corollary 5.8}]\label{Levyfam}
Let $\cP_n$, $n = 1,2,\ldots$, be pyramids. Then the following {\rm (1)} -- {\rm (3)} are
equivalent to each other.
\begin{enumerate}
\item $\cP_n$ converges weakly to the one-point mm-space $*$ as $n\to\infty$.
\item $\lim_{n\to\infty} \OD(\cP_n;-\kappa) = 0$ for any $\kappa > 0$.
\item $\lim_{n\to\infty} \OD(\cP_n)= 0$.
\end{enumerate}
\end{thm}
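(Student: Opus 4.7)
My plan is to prove the cycle $(1) \Rightarrow (2) \Leftrightarrow (3) \Rightarrow (1)$, relying throughout on the limit formula for observable diameter (Theorem \ref{lim_form}), the monotonicity and right-continuity of $\kappa \mapsto \OD(\cP;-\kappa)$, and the compactness of $(\Pi, \rho)$ (Theorem \ref{Py:thm}(2)).

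For $(1) \Rightarrow (2)$, I would observe that $\OD(*;-\kappa) = 0$ for every $\kappa > 0$ since the one-point space admits only constant $1$-Lipschitz functions. Given $\kappa_0 > 0$, I would apply the limit formula with parameter $\kappa_0/2$ to get $\lim_{\ep \to 0+}\limsup_n \OD(\cP_n;-(\kappa_0/2+\ep)) = 0$. Then by monotonicity, for $\ep < \kappa_0/2$, $\OD(\cP_n;-\kappa_0) \leq \OD(\cP_n;-(\kappa_0/2+\ep))$, so $\limsup_n \OD(\cP_n;-\kappa_0) = 0$ as wanted.

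For the equivalence $(2) \Leftrightarrow (3)$, I would work directly from the definition $\OD(\cP) = \inf_{\kappa>0}\max\{\OD(\cP;-\kappa),\kappa\}$. For $(2) \Rightarrow (3)$, given $\ep > 0$ the choice $\kappa = \ep$ yields $\OD(\cP_n) \leq \max\{\OD(\cP_n;-\ep),\ep\}$, and the right-hand side converges to $\ep$ by (2); letting $\ep \to 0$ gives (3). For $(3) \Rightarrow (2)$, given $\kappa > 0$ and $\delta \in (0,\kappa)$, eventually $\OD(\cP_n) < \delta$, which produces some $\kappa_n < \delta$ with $\OD(\cP_n;-\kappa_n) < \delta$; by monotonicity $\OD(\cP_n;-\kappa) \leq \OD(\cP_n;-\kappa_n) < \delta$, giving (2).

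The main step is $(2) \Rightarrow (1)$. Here I would use compactness of $(\Pi,\rho)$: it suffices to show that every weakly convergent subsequence $\cP_{n_k} \to \cP$ has $\cP = \cP_*$. Applying Theorem \ref{lim_form} to this subsequence together with (2) yields
\[
\OD(\cP;-\kappa) = \lim_{\ep \to 0+}\limsup_k \OD(\cP_{n_k};-(\kappa+\ep)) = 0
\]
for every $\kappa > 0$. From the definition of $\OD(\cP;-\kappa)$, taking $\kappa' = \kappa/2$ and using $\sup_{X \in \cP}\OD(X;-\kappa) \leq \sup_{X \in \cP}\OD(X;-(\kappa'+\delta))$ for $\delta < \kappa/2$, I deduce $\OD(X;-\kappa) = 0$ for every $X \in \cP$ and every $\kappa > 0$. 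This forces every $1$-Lipschitz function on such an $X$ to be essentially constant, so $X = *$, and hence $\cP = \{*\} = \cP_*$. Compactness then upgrades the subsequential conclusion to $\cP_n \to *$ weakly.

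The only mildly delicate step I expect is the last paragraph's passage from $\OD(\cP;-\kappa) = 0$ for all $\kappa > 0$ to $\OD(X;-\kappa) = 0$ for each $X \in \cP$: one must be careful with the $\delta$-limit in the definition of $\OD(\cP;-\kappa)$, but monotonicity in $\kappa$ handles this cleanly as indicated. Everything else is direct manipulation of the formulas already recorded in the preliminaries.
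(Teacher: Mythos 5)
Your argument is correct, but there is nothing in the paper to compare it with: Theorem \ref{Levyfam} is imported verbatim from \cite{OS}*{Corollary 5.8} and the paper gives no proof of it. Taking the quoted tools (Theorem \ref{lim_form}, the compactness and metrizability of $(\Pi,\rho)$, and the monotonicity of $\kappa\mapsto\OD(\cP;-\kappa)$) as given, your cycle $(1)\Rightarrow(2)\Leftrightarrow(3)\Rightarrow(1)$ goes through. The two places worth double-checking are exactly the ones you flag, and both are fine: since $\OD(X;-(\kappa+\delta))$ increases as $\delta\downarrow 0$, the limit defining $\OD(\cP;-\kappa)$ is a supremum over $\delta>0$, so $\OD(\cP;-\kappa)=0$ for all $\kappa>0$ does force $\OD(X;-\kappa')=0$ for every $X\in\cP$ and every $\kappa'>0$ (write $\kappa'=\kappa+\delta$); and applying this to $f=d_X(\cdot,x_0)$ with $x_0\in\supp\mu_X$ shows $\mu_X$ is a Dirac mass, whence $X=*$ and $\cP=\cP_*$. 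The final upgrade from subsequential limits to full convergence is legitimate because $\rho$ metrizes the weak topology on the compact space $\Pi$. Whether this coincides with the original proof in \cite{OS} cannot be judged from this paper alone, but as a self-contained derivation from the facts recorded in Section \ref{sec:prelim} it is sound.
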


\begin{ex}[cf.~\cite{OS}*{Example 3.13}]\label{GaussOD}
The observable diameter of the virtual infinite-dimensional Gaussian space $\cP_{\Gamma^\infty_{\lambda^2}}$ with variance $\lambda^2$ is
\[
\OD(\cP_{\Gamma^\infty_{\lambda^2}};-\kappa) = \diam(\gamma^1_{\lambda^2};1-\kappa) = 2\lambda I^{-1}((1-\kappa)/2)
\]
for any $\kappa$ and $\lambda$ with $0 < \kappa < 1$ and $\lambda \geq 0$, where
\[
I(r) := \gamma^1_{1^2}([0, r]) = \frac{1}{\sqrt{2\pi}} \int_0^r \exp(-\frac{x^2}{2}) \, dx.
\]
Therefore $\cP_{\Gamma^\infty_{\lambda^2}}$ converges weakly to $*$ as $\lambda \to 0$.
\end{ex}

\begin{dfn}[Metric transformation]
A function $F \colon [0,+\infty) \to [0, +\infty)$ is a {\it metric preserving function} provided that $F \circ d_X$ is a metric on $X$ for any metric space $(X, d_X)$. For a metric preserving function $F$, we define the {\it metric transformation} of an mm-space $X$ and of a pyramid $\cP$ by
\[
F(X) := (X, F \circ d_X, \mu_X) \quad \text{and} \quad F(\cP) := \overline{\bigcup_{X \in \cP} \cP_{F(X)}}^{\, \square}.
\]
\end{dfn}

If a metric preserving function $F$ is continuous, the topologies of $F(X)$ and $X$ coincide. In addition, if $F$ is nondecreasing, $F(\cP)$ is a pyramid for any pyramid $\cP$ and
\[
F(\cP_X) = \cP_{F(X)}
\]
holds for every mm-space $X$. Note that if $\{Y_m\}_{m=1}^\infty$ is an approximation of a pyramid $\cP$, then $\{F(Y_m)\}_{m=1}^\infty$ is an approximation of $F(\cP)$.

Let $F(s) := ts$ for $t > 0$, which is a continuous nondecreasing metric preserving function. We denote $F(X)$ and $F(\cP)$ by $tX$ and $t\cP$, respectively. Note that
\[
t\cP = \left\{tX \midd X \in \cP \right\},
\]
which is considered classically.

\begin{lem}\label{ODtrans}
Let $F$ be a continuous nondecreasing metric preserving function. Then we have
\[
\OD(F(\cP); -2\kappa) \leq 4F(\OD(\cP;-\kappa))
\]
for any pyramid $\cP$ and any $\kappa > 0$, where we agree that $F(+\infty) = \sup_{s > 0} F(s)$.
\end{lem}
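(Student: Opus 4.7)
The plan is to reduce the pyramid statement to a bound on a single mm-space, and then to establish the single-space bound via a Lévy-type concentration argument. More precisely, I would aim to prove the stronger single-space inequality
\[
\OD(F(X);-2\kappa) \leq 2F(\OD(X;-\kappa)),
\]
which is sharper than the stated $4F(\cdot)$ and therefore sufficient. For the transfer to the pyramid: by definition $\OD(F(\cP);-2\kappa) = \lim_{\delta\to 0+} \sup_{Y \in F(\cP)} \OD(Y;-(2\kappa+\delta))$, and since $F(\cP) = \overline{\bigcup_{X \in \cP} \cP_{F(X)}}^{\,\square}$, any such $Y$ is a $\square$-limit of mm-spaces $Y_n \prec F(X_n)$ with $X_n \in \cP$. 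I would then apply Theorem~\ref{lim_form} to $\cP_{Y_n} \to \cP_Y$, the Lipschitz-order monotonicity of $\OD$, and the inequality $\OD(X_n;-\kappa') \leq \OD(\cP;-\kappa')$ (a consequence of $X_n \in \cP$ together with right-continuity of $\OD(\,\cdot\,;-\kappa')$), combined with continuity and monotonicity of $F$, to chain the single-space bound up to $\OD(F(\cP);-2\kappa) \leq 2F(\OD(\cP;-\kappa))$.

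For the single-space bound, set $D := \OD(X;-\kappa)$ and take any $1$-Lipschitz $f\colon F(X)\to\R$; by definition of the metric on $F(X)$ this means $|f(x)-f(y)| \leq F(d_X(x,y))$. Pick a median $m$ of $f$ under $\mu_X$ and set $B := \{f \leq m\}$, so $\mu_X(B) \geq 1/2$. The key observation is: if $f(x) > m + F(D+\eta)$ for some $\eta > 0$, then for every $y \in B$ we have $F(d_X(x,y)) \geq f(x)-f(y) > F(D+\eta)$, and the monotonicity of $F$ forces $d_X(x,y) > D+\eta$; hence $d_X(x,B) \geq D+\eta$.

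Next, the function $g(x) := d_X(x,B)$ is $1$-Lipschitz on $X$, satisfies $g \geq 0$, and has $0$ as a median. The hypothesis $\OD(X;-\kappa) \leq D$ provides, for arbitrary $\ep > 0$, an interval of length at most $D+\ep$ carrying $g_*\mu_X$-mass $\geq 1-\kappa$; assuming $\kappa < 1/2$ (the complementary case being trivial since then $\OD(\,\cdot\,;-2\kappa) = 0$), this interval must contain the median $0$, and as $g \geq 0$ it is contained in $[0, D+\ep]$. Letting $\ep \to 0+$ gives $\mu_X(g > D) \leq \kappa$. Combined with the previous paragraph, $\mu_X(\{f > m + F(D+\eta)\}) \leq \kappa$ for every $\eta > 0$; sending $\eta \to 0+$ and using continuity of $F$ yields $\mu_X(\{f > m + F(D)\}) \leq \kappa$. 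The symmetric argument with $\{f \geq m\}$ in place of $B$ bounds the lower tail by the same amount, so $\mu_X(\{|f-m| \leq F(D)\}) \geq 1-2\kappa$ and hence $\diam(f_*\mu_X;1-2\kappa) \leq 2F(D)$. Taking the supremum over $1$-Lipschitz $f$ proves the single-space bound.

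The main obstacle I anticipate is the bookkeeping around strict versus non-strict inequalities in the $\eta \to 0+$ and $\ep \to 0+$ limits, together with the careful application of the limit formula when transferring the single-space bound to the pyramid; the underlying concentration argument itself is fairly standard.
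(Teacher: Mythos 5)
Your proposal is correct and shares the paper's overall architecture---a single-space estimate followed by a transfer to pyramids---but it differs substantively in where the work is done: the paper simply cites \cite{prod}*{Lemma 3.22} for the bound $\OD(F(X);-2\kappa)\le 4F(\OD(X;-\kappa))$ and then applies the limit formula (Theorem \ref{lim_form}) to a monotone approximation $\{Y_m\}$ of $\cP$, using that $\{F(Y_m)\}$ is an approximation of $F(\cP)$; you instead give a self-contained median/concentration proof of the single-space inequality, and your argument in fact yields the sharper constant $2$. I checked the key steps: the implication $F(d_X(x,y))>F(D+\eta)\Rightarrow d_X(x,y)>D+\eta$ uses only monotonicity of $F$; the function $g=d_X(\cdot,B)$ has median $0$ because $\mu_X(B)\ge 1/2$; and the near-optimal interval for $g_*\mu_X$ of mass $\ge 1-\kappa$ must contain $0$ exactly when $\kappa<1/2$, the complementary case being vacuous since $\OD(\,\cdot\,;-2\kappa)=0$ there. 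The $\eta\to 0+$ and $\ep\to 0+$ limits go through by continuity from below of the measure together with $\inf_{\eta>0}F(D+\eta)=F(D)$. Your transfer step is also sound but slightly heavier than the paper's: working with arbitrary $Y\in F(\cP)$ forces you to combine the limit formula with monotonicity of $\OD$ under the Lipschitz order ($Y_n\prec F(X_n)$) and with the observation that $X\in\cP$ implies $\OD(X;-\kappa)\le\OD(\cP;-\kappa)$, whereas the paper's choice of a monotone approximation lets it apply the limit formula once; do also note that the case $\OD(\cP;-\kappa)=+\infty$ is covered by the convention $F(+\infty)=\sup_{s>0}F(s)$. In short, your route buys independence from \cite{prod} and a better constant at the cost of a longer argument.
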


\begin{proof}
For any mm-space $X$ and any $\kappa > 0$, we already have obtained the same estimate
\[
\OD(F(X); -2\kappa) \leq 4F(\OD(X;-\kappa))
\]
in \cite{prod}*{Lemma 3.22}. Using this, we check for a given pyramid $\cP$. Let $\{Y_m\}_{m=1}^\infty$ be an approximation of $\cP$. By Theorem \ref{lim_form}, we have
\begin{align*}
\OD(F(\cP);-2\kappa) & = \lim_{\ep\to 0+} \liminf_{m\to\infty} \OD(F(Y_m);-2(\kappa+\ep)) \\
& \leq \lim_{\ep\to 0+} \liminf_{m\to\infty} 4F(\OD(Y_m;-(\kappa+\ep))) \\
& \leq \lim_{\ep\to 0+} 4F(\OD(\cP;-(\kappa+\ep))) \\
& \leq 4F(\OD(\cP;-\kappa)).
\end{align*}
The proof is completed.
\end{proof}

\begin{dfn}
Let $\cP$ and $\cP'$ be two pyramids and let $1 \leq p \leq +\infty$. We define the $l_p$-product of $\cP$ and $\cP'$ by
\[
\cP \times_p \cP' := \overline{\bigcup_{X \in \cP, Y \in \cP'} \cP_{X \times_p Y}}^{\, \square},
\]
where $X \times_p Y$ is the $l_p$-product space of two mm-spaces $X$ and $Y$.
\end{dfn}
Note that if $\{X_m\}_{m=1}^\infty$ and $\{Y_m\}_{m=1}^\infty$ are approximations of pyramids $\cP$ and $\cP'$, respectively, then $\{X_m\times_p Y_m\}_{m=1}^\infty$ is an approximation of $\cP \times_p \cP'$.

\section{No compact neighborhood in $\X$}\label{sec:cpt_nbd}
In this section, we prove Theorem \ref{no_cpt_nbd} and Corollary \ref{sigma_cpt}. For the box topology, we prove the following lemma which implies Theorem \ref{no_cpt_nbd} (in fact they are equivalent).

\begin{lem}\label{lem:tot_bdd}
Any neighborhood of an mm-space $X$ is not precompact with respect to the box topology.
\end{lem}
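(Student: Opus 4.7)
The plan is to construct, for every $X \in \X$ and every neighborhood $U$ of $X$, an infinite family $\{X_D\}_{D\in\N}$ inside a small $\square$-ball around $X$ whose observable diameters blow up. Choose $\ep>0$ so that $B_\square(X,\ep)\subset U$, set $\kappa_0:=\ep/2$, and for every $D>0$ define the two-point mm-space
\[
Y_D:=\bigl(\{0,D\},\,|\cdot|,\,(1-\kappa_0)\delta_0+\kappa_0\delta_D\bigr)
\]
together with $X_D:=X\times_\infty Y_D$. The geometric picture is that $Y_D$ has a heavy core near $0$ (so it sits near $*$ in the box distance uniformly in $D$) together with a light atom at arbitrarily large distance, which forces a large observable diameter.

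First I would verify $X_D\in B_\square(X,\ep)$ uniformly in $D$. Corollary \ref{box_1pt} with $A=\{0\}$ gives $\square(Y_D,*)\leq \kappa_0$ regardless of $D$. To transfer this to $X_D$ I would apply Lemma \ref{box_opt}: take the coupling $\pi$ on $(X\times Y_D)\times X$ given by the push-forward of $\mu_X\otimes\mu_{Y_D}$ under $(x,y)\mapsto((x,y),x)$, and the set $S:=\{((x,0),x)\mid x\in X\}$. Because $d_{l_\infty}((x,0),(x',0))=d_X(x,x')$, the distortion of $S$ is $0$ and $\pi(S)=1-\kappa_0$, so $\square(X_D,X)\leq\kappa_0=\ep/2<\ep$.

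Next I would establish the key lower bound $\OD(X_D;-\kappa)\geq D$ for every $\kappa\in(0,\kappa_0)$. The projection $X\times_\infty Y_D\to Y_D$ is $1$-Lipschitz (by the $l_\infty$ formula) and pushes the product measure to $\mu_{Y_D}$, so $Y_D\prec X_D$; monotonicity of the observable diameter under the Lipschitz order reduces the problem to bounding $\OD(Y_D;-\kappa)$ from below. Any Borel subset of $\R$ with $\mu_{Y_D}$-mass at least $1-\kappa>1-\kappa_0$ must contain both atoms $0$ and $D$, so its diameter is at least $D$; taking $f$ to be the inclusion $Y_D\hookrightarrow\R$ yields $\OD(Y_D;-\kappa)=D$ and hence $\OD(X_D;-\kappa)\geq D$.

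Finally, suppose a subsequence $X_{D_k}\to W$ in $\square$ with $D_k\to\infty$. Since $\square$-convergence implies weak convergence of the associated pyramids, Theorem \ref{lim_form} applied at any $\kappa'\in(0,\kappa_0)$ gives
\[
\OD(W;-\kappa')=\lim_{\eta\to 0+}\liminf_{k\to\infty}\OD(X_{D_k};-(\kappa'+\eta));
\]
choosing $\eta>0$ with $\kappa'+\eta<\kappa_0$ forces the right-hand side to be $+\infty$ because $\OD(X_{D_k};-(\kappa'+\eta))\geq D_k\to\infty$. This contradicts the fact that the observable diameter of an mm-space is finite, so no $\square$-convergent subsequence can exist and $U$ is not precompact. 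The main technical point — and the reason the $l_\infty$-product is used rather than a plain perturbation — is that this product simultaneously preserves $\square$-proximity to $X$ (via the $y=0$ slice) and the Lipschitz domination $Y_D\prec X_D$ (via the second projection), so the explosion of $\OD$ in $Y_D$ is faithfully transported into $X_D$.
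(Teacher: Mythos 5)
Your argument is correct, but it proves non-precompactness by a genuinely different mechanism than the paper. The paper's proof exhibits an explicit infinite $\delta$-discrete net in $U_{2\ep}(X)$: it replaces $X$ by a finite approximation $\dot X$ (Proposition \ref{mmg4.20}), sets $X_n := \dot X \times_\infty Y_n$ with $Y_n$ a uniform $(2N)^n$-point space of diameter $\ep$, and uses Lemma \ref{box_opt} plus a counting argument (an optimal correspondence either identifies two distinct points of $\dot X$, forcing distortion at least $\min_{x\ne x'} d_{\dot X}(x,x')$, or contains too few pairs to carry more than half the mass) to get a uniform positive lower bound on $\square(X_m,X_n)$; this directly refutes total boundedness, with all witnesses of uniformly bounded diameter. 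You instead keep $X_D = X\times_\infty Y_D$ within $\ep/2$ of $X$ by making the far atom of $Y_D$ light, and kill every convergent subsequence by blowing up the invariant $\OD(\,\cdot\,;-\kappa)$, invoking the limit formula (Theorem \ref{lim_form}) and the finiteness of the observable diameter of an mm-space. Your route is shorter and conceptually cleaner (a single unbounded invariant does all the work), at the cost of importing the nontrivial Theorem \ref{lim_form} and of not producing the quantitative discreteness the paper's construction gives; note also that since $\square$-convergence implies concentration, your sequence in fact has no $\conc$-convergent subsequence either, which is more than the lemma asks. Two small points to tidy: the claim $\OD(Y_D;-\kappa)=D$ should read $\geq D$ (which is all you use), and the step ``a set of $f_*\mu_{Y_D}$-mass at least $1-\kappa$ contains both atoms'' needs $\kappa<\min\{\kappa_0,1-\kappa_0\}$, which indeed holds because $\kappa<\kappa_0=\ep/2$ and $\ep$ is small.
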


\begin{proof}
For any small $\ep > 0$, it is sufficient to prove that $U_{2\ep}(X) \subset (\X, \square)$ is not precompact. We construct a countable discrete net in $U_{2\ep}(X)$. There exists a finite mm-space $\dot{X}$ such that $\square(X, \dot{X}) < \ep$ by Proposition \ref{mmg4.20} and let $N := \#\dot{X}$, where $\#$ means the number of points.
We define mm-spaces $Y_n$, $n=1,2,\ldots$, as
\[
Y_n := \{1, 2, \ldots, (2N)^n\}
\]
with metric $d_{Y_n}(i, j) := \ep$ for $i\neq j$ and uniform probability measure $\mu_{Y_n} := (2N)^{-n}\sum_{i=1}^{(2N)^n}\delta_i$, and define $X_n := \dot{X} \times_\infty Y_n$. Let us prove that $\{X_n\}_{n=1}^\infty$ is a discrete net in $U_{2\ep}(X)$.

Since $\square(A \times_p B, A \times_p C) \leq \square(B,C)$ in general (see \cite{prod}*{Proposition 4.1}), we have
\[
\square(X_n, X) < \square(X_n, \dot{X})+\ep \leq \square(Y_n, *) + \ep \leq 2\ep.
\]
Thus $X_n \in U_{2\ep}(X)$ for any $n$. We next prove that
\[
\square(X_m, X_n) \geq \min\{\ep, \min_{x\neq x'} d_{\dot{X}}(x,x'), \frac{1}{2}\} =: \delta > 0
\]
for any $m\neq n$. Assume that $m > n$. By Lemma \ref{box_opt}, there exist a coupling measure $\pi \in \Pi(\mu_{X_m}, \mu_{X_n})$ and a closed set $S \subset X_m \times X_n$ such that
\[
\square(X_m, X_n) = \max\{\dis{S}, 1-\pi(S)\}.
\]
If there exist two pairs $(x, y), (x', y) \in S$ with $x \neq x'$ in $X_m$, then we have
\[
\square(X_m, X_n) \geq \dis{S} \geq d_{X_m}(x,x') \geq \delta.
\]
If not, then we have $\#S \leq \#X_n = N(2N)^n$ and
\[
\pi(S) \leq \frac{\#S}{\#Y_m}\max_{x\in\dot{X}}{\mu_{\dot{X}}(\{x\})} \leq \frac{N(2N)^n}{(2N)^m} \leq \frac{1}{2},
\]
which implies that $\square(X_m, X_n) \geq 1-\pi(S) \geq \frac{1}{2} \geq \delta$. Therefore $\{X_n\}_{n=1}^\infty$ is $\delta$-discrete. This completes the proof.
\end{proof}

In order to prove Theorem \ref{no_cpt_nbd} for the concentration topology, we start with recalling the following proposition.

\begin{prop}[cf.~\cite{E}*{Theorem 3.3.9}]\label{prop:loc_cpt}
Let $Y$ be a Hausdorff space and let $X \subset Y$ be a dense subset. If a point $x \in X$ has a compact neighborhood {\rm (}in the relative topology of $Y${\rm )}, then $x$ is an interior point of $X$. In particular, if $X$ is locally compact, then $X$ is open in $Y$.
\end{prop}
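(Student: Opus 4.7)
The plan is to exploit two facts: compactness is an intrinsic property of a space (so a compact set is compact wherever it sits), and in a Hausdorff space every compact subset is closed. Denseness of $X$ in $Y$ will then force any open set of $Y$ that meets $X$ only inside a closed set to be contained in that closed set.

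Concretely, I would start by unwinding the hypothesis. Let $K$ be a compact neighborhood of $x$ in $X$ (with the relative topology inherited from $Y$). Then there exists an open subset $U$ of $Y$ with
\[
x \in U \cap X \subset K.
\]
Since $K$ is compact as a subspace of $X$, it is compact as a subspace of $Y$ as well, and because $Y$ is Hausdorff, $K$ is closed in $Y$. Consequently $U \setminus K$ is open in $Y$.

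The key step is to show $U \subset K$. Suppose for contradiction that $U \setminus K$ is nonempty. Since $X$ is dense in $Y$, the nonempty open set $U \setminus K$ must intersect $X$; pick $y \in X \cap (U \setminus K)$. Then $y \in U \cap X$ but $y \notin K$, contradicting $U \cap X \subset K$. Hence $U \subset K \subset X$, so $x \in U \subset X$ witnesses that $x$ is an interior point of $X$ in $Y$. The second assertion is immediate: if $X$ is locally compact, every point of $X$ has a compact neighborhood, hence every point of $X$ is an interior point of $X$ in $Y$, so $X$ is open in $Y$.

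There is no real obstacle here; the only thing to be careful about is not to confuse interiors taken in $X$ with interiors taken in $Y$, and to remember that compactness is absolute while being closed is relative — it is precisely the combination (Hausdorffness of $Y$ plus compactness of $K$) that makes $K$ closed in the larger space $Y$, which is what allows the density argument to bite.
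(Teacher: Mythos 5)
Your proof is correct and is exactly the standard density argument that the paper relies on by citing Engelking's Theorem 3.3.9 (the paper gives no proof of its own). The key steps --- compactness of $K$ is intrinsic, Hausdorffness of $Y$ makes $K$ closed in $Y$, and density of $X$ forces the open set $U\setminus K$ to be empty --- are all present and correctly ordered.
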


The $l_p$-product is useful again to construct a convergent sequence to a given mm-space or pyramid. The following proposition is known.

\begin{prop}[\cite{MMG}*{Proposition 7.32}]\label{mmg7.32}
Let $X$ and $Y$ be two mm-spaces. If $\OD(Y) < 1/2$, then we have
\[
\conc(X \times_p Y, X) \leq \OD(Y)
\]
for any $1 \leq p \leq +\infty$.
\end{prop}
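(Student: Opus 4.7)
The plan is to exhibit a single pair of parameters realising the bound in the definition of $\conc$. Fix parameters $\varphi \colon I \to X$ and $\psi \colon I \to Y$, together with a Borel measure-preserving map $T = (T_1, T_2) \colon I \to I \times I$ with $T_\ast \mathcal{L}^1 = \mathcal{L}^1 \otimes \mathcal{L}^1$ (available from the standard isomorphism of standard Borel probability spaces). Setting
\[
\tilde\varphi(s) := (\varphi(T_1(s)), \psi(T_2(s))), \qquad \varphi_0 := \varphi \circ T_1,
\]
one checks $\tilde\varphi_\ast \mathcal{L}^1 = \mu_X \otimes \mu_Y$, so $\tilde\varphi$ is a parameter of $X \times_p Y$, and $\varphi_0$ is a parameter of $X$. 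The task then reduces to bounding $\haus(\tilde\varphi^\ast \Lip_1(X \times_p Y), \varphi_0^\ast \Lip_1(X))$ in the Ky Fan metric $\kf^{\mathcal{L}^1}$ by $\OD(Y)$.

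One of the two Hausdorff inclusions is immediate: for any $g \in \Lip_1(X)$ the function $(x,y) \mapsto g(x)$ is $1$-Lipschitz on $X \times_p Y$, since in the $l_p$-product one has $d_X(x,x') \leq d_{X\times_p Y}((x,y),(x',y'))$, and composing with $\tilde\varphi$ returns precisely $g \circ \varphi_0$. Hence $\varphi_0^\ast \Lip_1(X) \subset \tilde\varphi^\ast \Lip_1(X \times_p Y)$, contributing nothing in that direction.

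The substantive direction uses the L\'evy median. Given $f \in \Lip_1(X \times_p Y)$, each slice $y \mapsto f(x,y)$ is $1$-Lipschitz on $Y$, so define $m_f(x)$ to be its lower median. Two facts need verification. First, $m_f$ is $1$-Lipschitz on $X$: since $x \mapsto f(x,y)$ is $1$-Lipschitz for each fixed $y$, the slices $f(x,\cdot)$ and $f(x',\cdot)$ differ pointwise by at most $d_X(x,x')$, and a short quantile comparison transfers this to the medians. Second, $m_f$ approximates $f$ on a large set. Since $\OD(Y) < 1/2$, one may select $\kappa \in (0,1/2)$ such that $\max\{\OD(Y;-\kappa), \kappa\}$ is arbitrarily close to $\OD(Y)$. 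By definition of $\OD(Y;-\kappa)$ there is a Borel $B_x \subset \R$ with $\mu_Y(f(x,\cdot)^{-1}(B_x)) \geq 1 - \kappa$ and $\diam B_x \leq \OD(Y;-\kappa)$; and because $\kappa < 1/2$, both $\mu_Y\{f(x,\cdot) \leq m_f(x)\}$ and $\mu_Y\{f(x,\cdot) \geq m_f(x)\}$ exceed $\kappa$, forcing $m_f(x) \in [\inf B_x, \sup B_x]$. Consequently $|f(x,y) - m_f(x)| \leq \OD(Y;-\kappa)$ on a $\mu_Y$-set of $y$ of measure at least $1 - \kappa$.

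To conclude, I would apply Fubini via $\tilde\varphi_\ast \mathcal{L}^1 = \mu_X \otimes \mu_Y$:
\begin{align*}
\mathcal{L}^1\bigl\{ s \in I : |(f \circ \tilde\varphi)(s) - (m_f \circ \varphi_0)(s)| > \OD(Y;-\kappa) \bigr\}
&= (\mu_X \otimes \mu_Y)\bigl\{ (x,y) : |f(x,y) - m_f(x)| > \OD(Y;-\kappa) \bigr\} \\
&\leq \kappa,
\end{align*}
so $\kf^{\mathcal{L}^1}(f \circ \tilde\varphi, m_f \circ \varphi_0) \leq \max\{\OD(Y;-\kappa), \kappa\}$, and taking the infimum over admissible $\kappa$ delivers the bound $\OD(Y)$. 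The main obstacle is exactly the median step: one must simultaneously verify that $m_f$ is $1$-Lipschitz and that deviations from it are controlled by $\OD(Y;-\kappa)$, and it is precisely the hypothesis $\OD(Y) < 1/2$ that keeps the quantile threshold $\kappa$ below $1/2$, where the median is genuinely enclosed inside the concentrating set.
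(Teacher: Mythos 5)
The paper does not prove this proposition: it is quoted verbatim from \cite{MMG}*{Proposition 7.32}, so there is no internal proof to compare against. Your argument is correct and is essentially the standard one behind that citation: fix parameters of $X\times_p Y$ and $X$ through a measure isomorphism $I\cong I\times I$, observe that $\Lip_1(X)$ pulls back into $\Lip_1(X\times_p Y)$ via the $1$-Lipschitz projection, and, for the substantive direction, replace $f\in\Lip_1(X\times_p Y)$ by the $1$-Lipschitz function $x\mapsto m_f(x)$ built from the slices $f(x,\cdot)$ (in \cite{MMG} a partial L\'evy mean/median plays exactly the role of your lower median), then combine concentration of each slice with Fubini. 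Two small points to tidy up, neither of which affects validity: the partial diameter $\diam(f(x,\cdot)_*\mu_Y;1-\kappa)$ is an infimum, so one should take $B_x$ with $\diam B_x\le \OD(Y;-\kappa)+\delta$ and let $\delta\to 0$ at the end; and when passing from $\max\{\OD(Y;-\kappa),\kappa\}$ to $\OD(Y)$ one should note that the hypothesis $\OD(Y)<1/2$ ensures the infimum defining $\OD(Y)$ is already achieved along $\kappa<1/2$, which is precisely the range in which your median-versus-$B_x$ intersection argument works.
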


This proposition can be generalized to pyramids as follows.

\begin{prop}\label{ODest}
Let $\cP$ and $\cP'$ be two pyramids. If $\OD(\cP') < 1/2$, then we have
\[
\rho(\cP \times_p \cP', \cP) \leq \OD(\cP')
\]
for any $1 \leq p \leq +\infty$.
\end{prop}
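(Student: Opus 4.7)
The plan is to lift the mm-space result Proposition \ref{mmg7.32} to pyramids by taking approximations, just as was done for Lemma \ref{ODtrans}. Concretely, I would pick approximations $\{X_m\}_{m=1}^\infty$ of $\cP$ and $\{Y_m\}_{m=1}^\infty$ of $\cP'$ as in Lemma \ref{approximation}. Then $X_m$ converges weakly to $\cP$, $Y_m$ converges weakly to $\cP'$, and by the remark following the definition of the $l_p$-product, $\{X_m \times_p Y_m\}_{m=1}^\infty$ is an approximation of $\cP \times_p \cP'$, hence $X_m \times_p Y_m$ converges weakly to $\cP \times_p \cP'$.

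Next, since $Y_m \in \cP'$, we have $\OD(Y_m) \leq \sup_{X \in \cP'} \OD(X) = \OD(\cP') < 1/2$, so Proposition \ref{mmg7.32} applies and gives
\[
\conc(X_m \times_p Y_m, X_m) \leq \OD(Y_m) \leq \OD(\cP')
\]
for every $m$ and every $1 \leq p \leq +\infty$. Because the embedding $\iota \colon (\X, \conc) \to (\Pi, \rho)$ is $1$-Lipschitz (Theorem \ref{Py:thm} (3)), translating to pyramids yields
\[
\rho(\cP_{X_m \times_p Y_m}, \cP_{X_m}) \leq \conc(X_m \times_p Y_m, X_m) \leq \OD(\cP').
\]

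Finally, I would pass to the limit $m \to \infty$. Since $\rho$ is compatible with weak convergence (Theorem \ref{Py:thm} (1)), the metric $\rho$ is continuous on $\Pi \times \Pi$ with respect to weak convergence in each coordinate, and by the approximation property above we have $\cP_{X_m} \to \cP$ and $\cP_{X_m \times_p Y_m} \to \cP \times_p \cP'$ weakly. Taking the limit in the displayed inequality therefore gives
\[
\rho(\cP \times_p \cP', \cP) = \lim_{m\to\infty} \rho(\cP_{X_m \times_p Y_m}, \cP_{X_m}) \leq \OD(\cP'),
\]
which is the desired estimate.

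There is no serious obstacle; the only point to check carefully is that the hypothesis $\OD(Y_m) < 1/2$ holds for every $m$, which follows directly from $Y_m \in \cP'$ and the identity $\OD(\cP') = \sup_{X \in \cP'} \OD(X)$, and that $\rho$-continuity allows us to pass to the limit simultaneously in both arguments, which is immediate from $\rho$ being a metric metrizing weak convergence.
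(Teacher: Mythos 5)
Your proposal is correct and follows essentially the same route as the paper: take approximations $\{X_m\}$ of $\cP$ and $\{Y_m\}$ of $\cP'$, apply Proposition \ref{mmg7.32} together with the $1$-Lipschitz property of $\iota$ to get $\rho(\cP_{X_m \times_p Y_m}, \cP_{X_m}) \leq \OD(Y_m) \leq \OD(\cP')$, and pass to the limit using that $X_m \times_p Y_m$ approximates $\cP \times_p \cP'$. The details you flag (that $\OD(Y_m) < 1/2$ follows from $Y_m \in \cP'$ and $\OD(\cP') = \sup_{X \in \cP'}\OD(X)$, and that $\rho$ metrizes weak convergence) are exactly the points the paper leaves implicit.
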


\begin{proof}
Let $\{X_m\}_{m=1}^\infty$ and $\{Y_m\}_{m=1}^\infty$ be approximations of $\cP$ and $\cP'$ respectively. For each $m$, we have
\[
\rho(\cP_{X_m \times_p Y_m}, \cP_{X_m}) \leq \conc(X_m \times_p Y_m, X_m) \leq \OD(Y_m) \leq \OD(\cP')
\]
by Proposition \ref{mmg7.32}. Thus we have
\[
\rho(\cP \times_p \cP', \cP) = \lim_{m\to\infty} \rho(\cP_{X_m \times_p Y_m}, \cP_{X_m}) \leq  \OD(\cP').
\]
The proof is completed.
\end{proof}

\begin{lem}
For given $X \in \X$ and $1 \leq p \leq +\infty$, the pyramid $\cP_\lambda$ for $\lambda > 0$ is defined by
\[
\cP_\lambda := \cP_X \times_p \cP_{\Gamma^\infty_{\lambda^2}}.
\]
Then $\cP_\lambda$ converges weakly to $X$  as $\lambda \to 0$.
\end{lem}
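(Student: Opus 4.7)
The plan is to reduce the claim to a direct application of Proposition \ref{ODest} combined with Example \ref{GaussOD} and Theorem \ref{Levyfam}.

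First I would observe that from Example \ref{GaussOD} we have the explicit formula $\OD(\cP_{\Gamma^\infty_{\lambda^2}}; -\kappa) = 2\lambda I^{-1}((1-\kappa)/2)$, which together with Theorem \ref{Levyfam} (or directly from the definition of $\OD(\cdot)$ as an infimum in $\kappa$) yields $\OD(\cP_{\Gamma^\infty_{\lambda^2}}) \to 0$ as $\lambda \to 0$. Concretely, one may just note that $\cP_{\Gamma^\infty_{\lambda^2}}$ converges weakly to $*$ as $\lambda \to 0$ (stated at the end of Example \ref{GaussOD}), and then invoke the implication $(1) \Rightarrow (3)$ in Theorem \ref{Levyfam}.

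Next I would apply Proposition \ref{ODest} with $\cP = \cP_X$ and $\cP' = \cP_{\Gamma^\infty_{\lambda^2}}$: for all $\lambda$ small enough that $\OD(\cP_{\Gamma^\infty_{\lambda^2}}) < 1/2$,
\[
\rho(\cP_\lambda, \cP_X) = \rho\bigl(\cP_X \times_p \cP_{\Gamma^\infty_{\lambda^2}},\, \cP_X\bigr) \leq \OD(\cP_{\Gamma^\infty_{\lambda^2}}).
\]
Letting $\lambda \to 0$ gives $\rho(\cP_\lambda, \cP_X) \to 0$. Since $\rho$ is compatible with weak convergence by Theorem \ref{Py:thm}(1), and since we identify $X$ with $\cP_X$ under the embedding $\iota$ of Theorem \ref{Py:thm}(3), this says exactly that $\cP_\lambda$ converges weakly to $X$.

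There is no real obstacle here; the entire argument is a straightforward assembly of previously stated results. The only minor point to be careful about is making sure the hypothesis $\OD(\cP_{\Gamma^\infty_{\lambda^2}}) < 1/2$ of Proposition \ref{ODest} is eventually satisfied, which is handled by the weak convergence $\cP_{\Gamma^\infty_{\lambda^2}} \to *$ as $\lambda \to 0$ together with Theorem \ref{Levyfam}.
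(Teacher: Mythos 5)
Your proposal is correct and follows exactly the paper's own argument: the paper's proof consists of the single line that the lemma ``follows directly from Theorem \ref{Levyfam}, Example \ref{GaussOD}, and Proposition \ref{ODest}'', and you have assembled precisely these three ingredients in the intended way, with the estimate $\rho(\cP_\lambda, \cP_X) \leq \OD(\cP_{\Gamma^\infty_{\lambda^2}}) \to 0$ as the key step. The extra care you take about the hypothesis $\OD(\cP_{\Gamma^\infty_{\lambda^2}}) < 1/2$ being eventually satisfied is a sensible elaboration of what the paper leaves implicit.
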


\begin{proof}
This follows directly from Theorem \ref{Levyfam}, Example \ref{GaussOD}, and Proposition \ref{ODest}.
\end{proof}

Since $\X$ and $\bar{\X}$ are downward-closed in $\Pi$ with respect to the inclusion, the pyramid $\cP_\lambda$ is neither in $\X$ nor in $\bar{\X}$ (see Example \ref{Gauss}). This leads to the following corollaries.

\begin{cor}\label{no_int}
Every $X \in \X$ is not an interior point of $\X$ with respect to the weak topology. Similarly, every $\bar{X} \in \bar{\X}$ is not an interior point of $\bar{\X}$ with respect to the weak topology.
\end{cor}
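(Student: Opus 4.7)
The plan is to deduce both statements directly from the preceding lemma together with the remark immediately before the corollary. For the first statement, I would take any $X\in\X$ and use the pyramids $\cP_\lambda = \cP_X \times_p \cP_{\Gamma^\infty_{\lambda^2}}$ produced by the lemma: they weakly converge to $X$ as $\lambda\to 0+$, while the preceding remark asserts that each $\cP_\lambda$ lies in $\Pi\setminus\bar{\X}\subset\Pi\setminus\X$. Hence every weak neighborhood of $X$ in $\Pi$ meets $\Pi\setminus\X$, which forces $X$ out of the interior of $\X$.

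For the second statement I would need a pyramid-level analogue of the preceding lemma, since a general $\bar{X}\in\bar{\X}$ need not come from an mm-space. The natural replacement is
\[
\cP_\lambda' := \cP_{\bar{X}} \times_p \cP_{\Gamma^\infty_{\lambda^2}},
\]
where $\cP_{\bar{X}}$ denotes the image of $\bar{X}$ under $\iota\colon \bar{\X}\hookrightarrow \Pi$. Proposition \ref{ODest} then yields
\[
\rho(\cP_\lambda', \cP_{\bar{X}}) \leq \OD(\cP_{\Gamma^\infty_{\lambda^2}}),
\]
and the right-hand side tends to $0$ as $\lambda\to 0+$ by Example \ref{GaussOD} (or Theorem \ref{Levyfam}); so $\cP_\lambda'$ converges weakly to $\bar{X}$ in $\Pi$.

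The main obstacle I anticipate is verifying $\cP_\lambda'\notin\bar{\X}$. I would argue it as follows. Because the one-point space $*$ lies in every pyramid and $*\times_p Y$ is mm-isomorphic to $Y$, each $Y\in\cP_{\Gamma^\infty_{\lambda^2}}$ already appears in the union defining $\cP_\lambda'$, so $\cP_{\Gamma^\infty_{\lambda^2}}\subset \cP_\lambda'$. Since $\cP_{\Gamma^\infty_{\lambda^2}}\notin\bar{\X}$ by Example \ref{Gauss} and $\bar{\X}$ is downward-closed in $\Pi$ with respect to inclusion, we conclude $\cP_\lambda'\notin\bar{\X}$. Combined with the weak convergence above, this shows no weak neighborhood of $\bar{X}$ in $\Pi$ can sit inside $\bar{\X}$, completing the argument.
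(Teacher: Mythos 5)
Your proposal is correct and follows essentially the same route as the paper: the paper deduces the corollary from the preceding lemma together with the observation that $\cP_\lambda$ lies outside $\bar{\X}$ because $\cP_{\Gamma^\infty_{\lambda^2}}\subset\cP_\lambda$, $\cP_{\Gamma^\infty_{\lambda^2}}\notin\bar{\X}$, and $\bar{\X}$ is downward-closed in $\Pi$. Your extension to a general $\bar{X}\in\bar{\X}$ via Proposition \ref{ODest} applied to the pyramid $\iota(\bar{X})$ is exactly the intended argument, with the inclusion $\cP_{\Gamma^\infty_{\lambda^2}}\subset\cP_\lambda'$ justified correctly by $*\times_p Y\cong Y$.
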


\begin{cor}
Both $\Pi \setminus \X$ and $\Pi \setminus \bar{\X}$ are dense in $\Pi$.
\end{cor}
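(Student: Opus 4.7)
The plan is to upgrade the construction from the preceding lemma, which produced a weakly convergent sequence to a given mm-space $X$, so that it works for an arbitrary pyramid $\cP \in \Pi$. Specifically, for any $\cP \in \Pi$ and any $1 \leq p \leq +\infty$, I set
\[
\widetilde{\cP}_\lambda := \cP \times_p \cP_{\Gamma^\infty_{\lambda^2}}, \quad \lambda > 0,
\]
and aim to verify (i) $\widetilde{\cP}_\lambda \to \cP$ weakly as $\lambda \to 0$, and (ii) $\widetilde{\cP}_\lambda \notin \bar{\X}$ for every sufficiently small $\lambda > 0$. Since $\X \subseteq \bar{\X}$, this gives $\widetilde{\cP}_\lambda \in \Pi\setminus\bar{\X} \subseteq \Pi\setminus\X$, from which density of both complements follows immediately.

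For (i), I would apply Proposition \ref{ODest} with $\cP$ and $\cP' = \cP_{\Gamma^\infty_{\lambda^2}}$ to get
\[
\rho(\widetilde{\cP}_\lambda, \cP) \leq \OD(\cP_{\Gamma^\infty_{\lambda^2}})
\]
once the right-hand side is smaller than $1/2$. By Example \ref{GaussOD}, $\OD(\cP_{\Gamma^\infty_{\lambda^2}}; -\kappa) = 2\lambda\, I^{-1}((1-\kappa)/2)$ tends to $0$ as $\lambda\to 0$ for each fixed $\kappa\in (0,1)$, so $\OD(\cP_{\Gamma^\infty_{\lambda^2}})\to 0$ as $\lambda \to 0$ (equivalently, by Theorem \ref{Levyfam}, since $\cP_{\Gamma^\infty_{\lambda^2}}$ converges weakly to $*$ as $\lambda \to 0$, which is the content of the last line of Example \ref{GaussOD}).

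For (ii), the key observation is the inclusion $\cP_{\Gamma^\infty_{\lambda^2}} \subseteq \widetilde{\cP}_\lambda$: every pyramid contains the one-point mm-space $*$ because $* \prec X$ for every $X$, and since $* \times_p Y$ is mm-isomorphic to $Y$, each $Y \in \cP_{\Gamma^\infty_{\lambda^2}}$ appears in the union defining $\widetilde{\cP}_\lambda$. Now, $\bar{\X}$ is downward-closed in $\Pi$ (as used just above Corollary \ref{no_int}), so $\widetilde{\cP}_\lambda \in \bar{\X}$ would force $\cP_{\Gamma^\infty_{\lambda^2}} \in \bar{\X}$, contradicting Example \ref{Gauss}. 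Hence $\widetilde{\cP}_\lambda \in \Pi\setminus\bar{\X}$, completing the proof.

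There is no real obstacle here: the only conceptual step is recognizing that Proposition \ref{ODest} is formulated at the level of pyramids, which lets the previous lemma's argument be applied directly to an arbitrary $\cP$ without a diagonalization through mm-space approximations. Apart from that, the argument simply parallels the derivation of Corollary \ref{no_int}.
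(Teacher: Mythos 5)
Your proof is correct and uses the same construction and the same key ingredients as the paper (the $l_p$-product with $\cP_{\Gamma^\infty_{\lambda^2}}$, Proposition \ref{ODest}, and the downward-closedness of $\bar{\X}$ in $\Pi$). The only cosmetic difference is that you run the argument directly at an arbitrary pyramid $\cP$, whereas the paper obtains the corollary by combining the preceding lemma (which treats $\cP_X$ for $X \in \X$) with the density of $\X$ in $\Pi$; both routes are one step from the same lemmas.
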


\begin{proof}[Proof of Theorem \ref{no_cpt_nbd}]
With respect to the box topology, Lemma \ref{lem:tot_bdd} implies Theorem \ref{no_cpt_nbd}.
Proposition \ref{prop:loc_cpt} and Corollary \ref{no_int} mean that every $X \in \X$ has no compact neighborhood in the relative weak topology, that is, the concentration topology. The proof is completed.
\end{proof}

\begin{prop}[cf.~\cite{W}*{25B}]\label{prop:sigma-cpt}
If a topological space $X$ is Hausdorff, $\sigma$-compact, and Baire, then at least one point in $X$ has a compact neighborhood.
\end{prop}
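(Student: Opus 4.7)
The plan is a direct application of the Baire category theorem in its closed-set form. Assuming $X$ is nonempty (otherwise the statement is vacuous and the proposition is only used later with nonempty $X$), write $X = \bigcup_{n=1}^\infty K_n$ with each $K_n$ compact, which is possible by $\sigma$-compactness.

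Next I would use the Hausdorff assumption to deduce that each compact set $K_n$ is closed in $X$. This is the only role of the Hausdorff hypothesis; it upgrades the compact cover to a closed cover, which is what is needed for the Baire step.

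Then I would invoke the characterization of Baire spaces: if $X$ is Baire, then any countable family of closed sets with empty interior has union with empty interior. Since $X$ itself is open in $X$ and nonempty, $X$ does not have empty interior, so the family $\{K_n\}$ cannot all have empty interior. Hence there is some $n_0$ with $\operatorname{int}(K_{n_0}) \neq \emptyset$.

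Finally, pick any $x \in \operatorname{int}(K_{n_0})$. Then $K_{n_0}$ is a compact set containing the open neighborhood $\operatorname{int}(K_{n_0})$ of $x$, so $K_{n_0}$ is a compact neighborhood of $x$, as required. There is no real obstacle here — the argument is essentially a textbook application of the Baire category theorem — but the cleanest way to present it is in the contrapositive (equivalent) form of Baire: if every $K_n$ had empty interior, then $X$ would be meager in itself, contradicting the Baire hypothesis.
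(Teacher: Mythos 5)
Your argument is correct and is essentially identical to the paper's own proof: decompose $X$ into countably many compact sets, use Hausdorffness to see they are closed, apply the Baire property to find one with nonempty interior, and conclude that it is a compact neighborhood of any of its interior points. No further comment is needed.
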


\begin{proof}
Let $X$ be a Hausdorff, $\sigma$-compact, and Baire space. There exists countable family $\{K_i\}_{i=1}^\infty$ of compact subsets of $X$ such that $X = \bigcup_{i=1}^\infty K_i$. Since $X$ is Hausdorff, each $K_i$ is closed. Since $X$ is Baire, at least one of $\{K_i\}_{i=1}^\infty$ must be nonempty interior. This completes the proof.
\end{proof}

\begin{proof}[Proof of Corollary \ref{sigma_cpt}]
Since $(\X, \square)$ is complete metric space, $(\X, \square)$ is Hausdorff and Baire. Theorem \ref{no_cpt_nbd} and Proposition \ref{prop:sigma-cpt} together mean that $(\X, \square)$ is not $\sigma$-compact.
\end{proof}

\begin{rem}
Let $\{Y_m\}_{m=1}^\infty$ be an approximation of the pyramid $\X$. It holds that
\[
\X = \overline{\bigcup_{m=1}^\infty \cP_{Y_m}}^{\, \square}
\]
and each $\cP_{Y_m}$ is $\square$-compact (see \cite{KY}). Here, Corollary \ref{sigma_cpt} says that the $\square$-closure operation is essential, namely
\[
\bigcup_{m=1}^\infty \cP_{Y_m} \subsetneq \X.
\]
\end{rem}

The following is obtained by the same reason as Corollary \ref{sigma_cpt}.

\begin{cor}
The completion $\bar{\X}$ of $(\X, \conc)$ is not $\sigma$-compact.
\end{cor}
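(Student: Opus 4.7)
The plan is to mirror exactly the argument used for Corollary \ref{sigma_cpt}, but applied to the completion $\bar{\X}$ in place of $(\X,\square)$. The three ingredients are: (i) $\bar{\X}$ is a Baire Hausdorff space; (ii) no point of $\bar{\X}$ has a compact neighborhood in the concentration topology; and (iii) Proposition \ref{prop:sigma-cpt}. Combining (i), (ii), (iii) immediately rules out $\sigma$-compactness, just as in the proof of Corollary \ref{sigma_cpt}.

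For ingredient (i), by construction $\bar{\X}$ is a complete metric space with respect to (the extension of) $\conc$, hence it is Hausdorff and, by the Baire category theorem, a Baire space.

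For ingredient (ii), the key point is that the topology of $\bar{\X}$ coincides with its subspace topology in $\Pi$, by the theorem of \cite{MMG}*{Theorem 7.27} quoted in the excerpt asserting that the natural extension $\iota\colon\bar{\X}\to\Pi$ is a topological embedding. Moreover $\bar{\X}\supset\X$ is dense in $\Pi$ by Theorem \ref{Py:thm}(4), and $\Pi$ is Hausdorff (being metrizable via $\rho$). Hence Proposition \ref{prop:loc_cpt} applies with $Y=\Pi$ and $X=\bar{\X}$: if some $\bar{X}\in\bar{\X}$ admitted a compact neighborhood in $\bar{\X}$, then $\bar{X}$ would be an interior point of $\bar{\X}$ in $\Pi$. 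But Corollary \ref{no_int} states precisely that no such interior point exists. Therefore no point of $\bar{\X}$ has a compact neighborhood.

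For ingredient (iii), assuming for contradiction that $\bar{\X}$ is $\sigma$-compact, Proposition \ref{prop:sigma-cpt} combined with (i) would force at least one point of $\bar{\X}$ to possess a compact neighborhood, contradicting (ii). This completes the plan. There is no real obstacle here: the entire content has already been assembled in Corollary \ref{no_int} and the preceding general topology lemmas, and the argument is a verbatim transcription of the proof of Corollary \ref{sigma_cpt} with $(\X,\square)$ replaced by $(\bar{\X},\conc)$, using the $\Pi$-embedding of $\bar{\X}$ to transport Corollary \ref{no_int} into the statement that no point of $\bar{\X}$ is locally compact in its own topology.
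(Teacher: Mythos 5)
Your proposal is correct and is precisely the argument the paper intends: the paper only remarks that the corollary ``is obtained by the same reason as Corollary \ref{sigma_cpt}'', and your three ingredients --- Baireness of the complete metric space $\bar{\X}$, the absence of compact neighborhoods via Proposition \ref{prop:loc_cpt} together with Corollary \ref{no_int} and the embedding $\bar{\X}\hookrightarrow\Pi$, and Proposition \ref{prop:sigma-cpt} --- are exactly the intended instantiation. No issues.
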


\section{$\X$ with the concentration topology is not a Baire space}\label{sec:Baire}

In this section, we prove Theorem \ref{conc_baire}. The key tool is the box distance from the one-point mm-space $*$, which is an invariant on $\X$ (see Corollary \ref{box_1pt}).

\begin{prop}\label{box_1pt_est}
For any mm-space $X$,
\[
\square(X, *) \geq 1 - \sup_{x\in X}{\mu_X(U_1(x))}.
\]
\end{prop}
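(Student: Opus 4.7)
The plan is to combine Corollary \ref{box_1pt}, which represents $\square(X,*)$ as a min--max over Borel subsets, with a simple containment argument: any Borel subset of small diameter must be contained in some open unit ball, so its measure is bounded above by $\sup_{x} \mu_X(U_1(x))$.

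Set $s := \sup_{x \in X} \mu_X(U_1(x))$. By Corollary \ref{box_1pt},
\[
\square(X, *) = \min_{A \subset X} \max\{\diam A,\ 1 - \mu_X(A)\},
\]
the minimum taken over Borel subsets $A \subset X$. Hence it suffices to prove that
\[
\max\{\diam A,\ 1 - \mu_X(A)\} \geq 1 - s
\]
for every Borel $A \subset X$. I would split into two cases according to the size of $\diam A$.

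If $\diam A \geq 1$, then $\max\{\diam A, 1 - \mu_X(A)\} \geq 1 \geq 1 - s$, since $s \in [0,1]$. If instead $\diam A < 1$ and $A$ is nonempty, pick any $x_0 \in A$; then $d_X(x_0, y) \leq \diam A < 1$ for all $y \in A$, so $A \subset U_1(x_0)$, whence $\mu_X(A) \leq \mu_X(U_1(x_0)) \leq s$, giving $1 - \mu_X(A) \geq 1 - s$. The empty set case is trivial since $1 - \mu_X(\emptyset) = 1 \geq 1 - s$. Taking the minimum over $A$ yields the desired inequality.

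There is no real obstacle here; the only technicality to be mindful of is that in Corollary \ref{box_1pt} the infimum is over Borel subsets, so one should choose the point $x_0$ from $A$ (not outside) to guarantee $A \subset U_1(x_0)$, and separately dispose of the empty $A$ case. No continuity or measurability issue arises since $U_1(x_0)$ is open and thus Borel.
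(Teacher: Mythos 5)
Your proposal is correct and follows essentially the same route as the paper: apply Corollary \ref{box_1pt} and observe that any Borel set of diameter less than $1$ containing a point $x$ lies in $U_1(x)$, hence has measure at most $\sup_{x}\mu_X(U_1(x))$. The only difference is that you spell out the trivial cases ($\diam A \geq 1$ and $A = \emptyset$) which the paper leaves implicit.
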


\begin{proof}
Take any Borel subset $A$ of $X$ with $\diam{A} < 1$ and choose $x \in A$. Then $A \subset U_1(x)$ and
\[
1 - \mu_X(A) \geq 1 - \mu_X(U_1(x)).
\]
Thus we have
\[
\max{\{\diam{A}, 1 - \mu_X(A)\}} \geq 1 - \mu_X(U_1(x)) \geq 1 - \sup_{x \in X}\mu_X(U_1(x)).
\]
Corollary \ref{box_1pt} implies the desired inequality.
\end{proof}

\begin{lem}\label{sphere_box}
Let $S^n(1)$ be the $n$-dimensional unit sphere with the standard Riemannian structure. Then
\[
\lim_{n \to \infty} \square(S^n(1), *) = 1.
\]
\end{lem}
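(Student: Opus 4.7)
The plan is to combine Proposition \ref{box_1pt_est} with the classical concentration of measure on the sphere. By Proposition \ref{box_1pt_est}, it suffices to show that
\[
\sup_{x \in S^n(1)} \mu_{S^n(1)}(U_1(x)) \longrightarrow 0 \quad \text{as } n \to \infty,
\]
since this gives $\liminf_{n \to \infty} \square(S^n(1), *) \geq 1$, while $\square(S^n(1), *) < 1$ always holds by the remark following the definition of the box distance.

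By the rotational symmetry of $S^n(1)$ and the invariance of the normalized Riemannian volume $\mu_{S^n(1)}$ under isometries, the quantity $\mu_{S^n(1)}(U_1(x))$ does not depend on $x \in S^n(1)$. Fixing a north pole $x_0$ and using geodesic polar coordinates, I would write
\[
\mu_{S^n(1)}(U_1(x_0)) = \frac{\int_0^1 \sin^{n-1}(t)\, dt}{\int_0^\pi \sin^{n-1}(t)\, dt}.
\]
The key step is then to show that this ratio tends to $0$ as $n \to \infty$. Since $1 < \pi/2$, there is some $\delta > 0$ with $\sin(1) \leq \sin(\pi/2 - \delta) < 1$, so the numerator is bounded above by $\sin^{n-1}(1)$, whereas the denominator is bounded below by $\int_{\pi/2 - \eta}^{\pi/2} \sin^{n-1}(t)\, dt \geq \eta \sin^{n-1}(\pi/2 - \eta)$ for any small $\eta > 0$. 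Choosing $\eta$ with $\sin(\pi/2 - \eta) > \sin(1)$ makes the quotient decay exponentially in $n$.

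Combining the above with Proposition \ref{box_1pt_est} yields
\[
\square(S^n(1), *) \geq 1 - \mu_{S^n(1)}(U_1(x_0)) \longrightarrow 1,
\]
and together with $\square(S^n(1), *) < 1$ this gives the claim. No real obstacle is expected here; the only minor care needed is the elementary estimate on the Wallis-type integral, for which I would use the monotonicity of $\sin$ on $[0, \pi/2]$ rather than invoking a sharper concentration inequality such as L\'evy's. (Alternatively one could directly apply L\'evy's concentration theorem to the 1-Lipschitz function $d(\cdot, x_0)$, whose median tends to $\pi/2$, but the direct integral estimate is self-contained and suffices.)
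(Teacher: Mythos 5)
Your proposal is correct and follows essentially the same route as the paper: apply Proposition \ref{box_1pt_est} and show that $\mu_{S^n(1)}(U_1(x)) = \int_0^1 \sin^{n-1}t\,dt \big/ \int_0^\pi \sin^{n-1}t\,dt \to 0$. The only difference is that you spell out the elementary estimate for this ratio, which the paper leaves implicit.
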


\begin{proof}
For any $x \in S^n(1)$, we have
\[
\lim_{n \to \infty}\mu_{S^n(1)}(U_1(x)) = \lim_{n \to \infty}\frac{\int_0^1 \sin^{n-1}{t} \, dt}{\int_0^\pi \sin^{n-1}{t} \, dt} = 0.
\]
Combining this and Proposition \ref{box_1pt_est} implies $\lim_{n \to \infty} \square(S^n(1), *) = 1$.
\end{proof}

\begin{prop}\label{lsc_box}
If a sequence $\{X_n\}_{n=1}^\infty$ of mm-spaces concentrates to an mm-space $X$, then
\[
\square(X, *) \leq \liminf_{n\to\infty} \square(X_n, *).
\]
\end{prop}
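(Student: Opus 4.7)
The plan is to combine three facts: the monotonicity of $\square(\,\cdot\,, *)$ under the Lipschitz order (Corollary \ref{box_1pt}), the existence of a dominated $\square$-convergent sequence produced from concentration (Proposition \ref{mmg6.2}), and the fact that $\square(\,\cdot\,, *)$ is $1$-Lipschitz in $\square$ (immediate from the triangle inequality for the metric $\square$).

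First I would invoke Proposition \ref{mmg6.2}: since $X_n$ concentrates to $X$, there exists a sequence $\{Y_n\}_{n=1}^\infty$ of mm-spaces with $Y_n \prec X_n$ and $\square(Y_n, X) \to 0$. Next, from the moreover part of Corollary \ref{box_1pt}, the domination $Y_n \prec X_n$ gives
\[
\square(Y_n, *) \leq \square(X_n, *) \quad \text{for every } n.
\]
Finally, the triangle inequality for $\square$ gives $|\square(Y_n, *) - \square(X, *)| \leq \square(Y_n, X) \to 0$, so $\square(Y_n, *) \to \square(X, *)$. Combining these yields
\[
\square(X, *) = \lim_{n \to \infty} \square(Y_n, *) \leq \liminf_{n \to \infty} \square(X_n, *),
\]
which is the desired inequality.

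There is essentially no obstacle here: the proof is a short chain of previously established results. The only subtle point is recognizing that the natural approach — trying to pass the formula $\square(X,*) = \min_{A \subset X} \max\{\diam A, 1-\mu_X(A)\}$ directly through a concentrating sequence — is harder than necessary, because $\square$-lower semicontinuity along $\conc$-convergent sequences is a stronger property than along $\square$-convergent sequences. The device of dominated $\square$-approximants from Proposition \ref{mmg6.2}, together with the Lipschitz-order monotonicity of $\square(\,\cdot\,, *)$, circumvents this issue entirely and reduces the statement to a continuity assertion along a genuine $\square$-convergent subsequence.
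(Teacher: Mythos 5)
Your proposal is correct and is essentially identical to the paper's own proof: both invoke Proposition \ref{mmg6.2} to produce dominated $\square$-approximants $Y_n \prec X_n$, then combine the monotonicity from Corollary \ref{box_1pt} with the $1$-Lipschitz continuity of $\square(\,\cdot\,,*)$ to conclude. You have merely made explicit two small steps that the paper leaves implicit.
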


\begin{proof}
Since $\{X_n\}_{n=1}^\infty$ concentrates to $X$, there exists a sequence $\{Y_n\}_{n=1}^\infty$ of mm-spaces $\square$-converging to $X$ with $Y_n \prec X_n$ for every $n$, by Proposition \ref{mmg6.2}. Thus we have
\[
\liminf_{n\to\infty} \square(X_n, *) \geq \lim_{n\to\infty} \square(Y_n, *) = \square(X, *).
\]
The proof is completed.
\end{proof}

\begin{proof}[Proof of Theorem \ref{conc_baire}]
Let $\X^\delta$ be the set of all mm-spaces with $\square(X, *) \leq \delta$ for $\delta \geq 0$. By Proposition \ref{lsc_box}, the set $\X^\delta$ is closed with respect to the concentration topology. Since
\[
\X = \bigcup_{n=1}^\infty \X^{1-\frac{1}{n}},
\]
if $\X^\delta$ is nowhere dense for any $\delta \in [0, 1)$, then $\X$ is not a Baire space. We prove that $\X^\delta$ is nowhere dense. Take any mm-space $X \in \X^\delta$. It is sufficient to prove that $X$ is not an interior point of $\X^\delta$. Indeed, the product space $X \times_p S^n(1)$ concentrates to $X$ as $n \to \infty$ by Proposition \ref{mmg7.32} but
\[
\liminf_{n\to\infty}\square(X \times_p S^n(1), *) \geq \lim_{n\to\infty}\square(S^n(1), *) = 1.
\]
Thus $X$ is not an interior point of $\X^\delta$. The proof is completed.
\end{proof}

\begin{rem}
\begin{enumerate}
\item From the above proof, $\X$ with the concentration topology is meager (i.e., a countable union of nowhere dense subsets) in itself. Actually, this fact is stronger than non-Baire.
\item The Baire category theorem claims that $\X$ is not completely metrizable, equivalently, $\X$ is not a $G_\delta$ subset of $\Pi$. Namely, there is no complete metric giving the concentration topology.
\end{enumerate}
\end{rem}

\begin{cor}
$\X$ is meager and non-comeager in $\Pi$.
\end{cor}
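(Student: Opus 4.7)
The plan is to mimic the proof of Theorem \ref{conc_baire} inside the compactification $(\Pi, \rho)$. I would extend the invariant $\square(\,\cdot\,, *)$ to all pyramids by setting $\square(\cP, *) := \sup_{X \in \cP} \square(X, *)$, and for each $\delta \in [0, 1)$ consider the sublevel set
\[
\Pi^\delta := \{\cP \in \Pi \mid \square(\cP, *) \leq \delta\}.
\]
Corollary \ref{box_1pt} gives $\square(Y, *) \leq \square(X, *)$ whenever $Y \prec X$, so the supremum defining $\square(\cP_X, *)$ is attained by $X$ itself, yielding $\square(\cP_X, *) = \square(X, *) < 1$. Hence $\X \subset \bigcup_{n=1}^\infty \Pi^{1-1/n}$, and it suffices to prove that every $\Pi^\delta$ with $\delta < 1$ is nowhere dense in $\Pi$.

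I would first verify that $\cP \mapsto \square(\cP, *)$ is lower semi-continuous on $\Pi$ by adapting Proposition \ref{lsc_box}. For $\cP_n \to \cP$ weakly and any $X \in \cP$, the definition of weak convergence furnishes $X_n \in \cP_n$ with $\square(X_n, X) \to 0$; continuity of $\square$ then gives $\square(X, *) = \lim_n \square(X_n, *) \leq \liminf_n \square(\cP_n, *)$, and taking the supremum over $X \in \cP$ yields the lower semi-continuity, hence the closedness of $\Pi^\delta$. For the empty interior, I fix $\cP \in \Pi^\delta$ and consider the perturbation
\[
\cP_n := \cP \times_p \cP_{S^n(1)}.
\]
By Proposition \ref{ODest} together with the classical concentration $\OD(S^n(1)) \to 0$, we obtain $\rho(\cP_n, \cP) \to 0$, so $\cP_n$ converges weakly to $\cP$. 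On the other hand, the $l_p$-projection $X \times_p S^n(1) \to S^n(1)$ is $1$-Lipschitz for every $X \in \cP$, so $S^n(1) \prec X \times_p S^n(1)$ and therefore $S^n(1) \in \cP_n$, which forces $\square(\cP_n, *) \geq \square(S^n(1), *) \to 1$ by Lemma \ref{sphere_box}. Thus $\cP_n \notin \Pi^\delta$ for $n$ large, so $\cP$ is not an interior point of $\Pi^\delta$.

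These two steps together show that each $\Pi^{1-1/n}$ is closed with empty interior, hence nowhere dense in $\Pi$, so $\X$ is meager in $\Pi$. Non-comeagerness is then a quick consequence of the Baire property: since $(\Pi, \rho)$ is compact metric, it is a Baire space, so $\Pi = \X \cup (\Pi \setminus \X)$ cannot be the union of two meager sets; the meagerness of $\X$ therefore forces $\Pi \setminus \X$ to be non-meager, i.e., $\X$ is non-comeager in $\Pi$. The main obstacle I anticipate is the extension of the lower semi-continuity of $\square(\,\cdot\,, *)$ from $\X$ to all of $\Pi$, which requires carefully unpacking the definition of weak convergence to produce the approximating mm-spaces $X_n \in \cP_n$; everything else is a natural adaptation of arguments already developed in Sections \ref{sec:cpt_nbd} and \ref{sec:Baire}.
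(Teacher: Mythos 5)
Your proof is correct and follows essentially the same route as the paper: the same sublevel sets $\Pi^\delta$, the same perturbation $\cP \times_p \cP_{S^n(1)}$ (which contains $S^n(1)$ and hence escapes $\Pi^\delta$) to show the interior is empty, and the same Baire-space contradiction for non-comeagerness. The only cosmetic difference is that you verify the closedness of $\Pi^\delta$ directly via lower semi-continuity of $\sup_{X \in \cP} \square(X,*)$, whereas the paper identifies $\Pi^\delta$ as the weak closure of $\X^\delta$ using Lemma \ref{approximation}; both are valid.
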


\begin{proof}
We first prove that the subset $\X$ of $\Pi$ is meager. By the definition of the weak convergence and Lemma \ref{approximation}, the closure of $\X^\delta$ with respect to the weak topology is
\[
\Pi^\delta := \left\{\cP \in \Pi \midd \square(X, *) \leq \delta \text{ for any } X \in \cP \right\}.
\]
It is sufficient to prove that the interior of $\Pi^\delta$ is empty for every $\delta \in [0, 1)$. Actually, given a pyramid $\cP \in \Pi^\delta$, the product pyramid $\cP \times_p \cP_{S^n(1)}$ converges weakly to $\cP$ by Proposition \ref{ODest} and $S^n(1) \in \cP \times_p \cP_{S^n(1)}$. These imply that $\cP$ is not an interior point of $\Pi^\delta$. Thus the subset $\X$ of $\Pi$ is meager.

Suppose that $\X$ is comeager in $\Pi$. The complement $\Pi\setminus\X$ is meager and hence $\Pi$ is meager in itself. This is a contradiction. Thus $\X$ is not comeager in $\Pi$.
\end{proof}

\begin{rem}
$\X^\delta$ is properly included in $\Pi^\delta$ as a subset of $\Pi$. For example,
\[
\left\{X \in \X \midd \diam X \leq \delta\right\}
\]
is a pyramid in $\Pi^\delta$ but it is not $\square$-compact, so that it belongs to $\Pi\setminus\X$ (see \cite{KY}). Indeed, letting $X_n$, $n=1,2,\ldots$, as the $l_\infty$-product space of $n$ copies of the interval $[0,\delta]$, the sequence  $\{X_n\}_{n=1}^\infty$ has no $\square$-convergent subsequence (see \cite{MMG}*{Proposition 7.37}).
\end{rem}

\section{$\X$ and $\Pi$ are contractible}\label{sec:contra}

In this section, we prove Theorem \ref{contra} by giving explicit deformation retractions. From now on, for two given maps $f$ and $g$ on a space $X$, we use the notation $(f, g)$ as the map on $X$ defined by
\[
(f, g)(x) := (f(x), g(x)), \quad x \in X.
\]
For example, $(\id_X, \id_X)$ means the map $X \ni x \mapsto (x, x) \in X \times X$.

\begin{lem}\label{retract_mm}
The map $H \colon \X \times [0,1] \to \X$ defined by
\[
H(X, t) := tX
\]
for $X \in \X$ and $t \in [0, 1]$ is continuous with respect to both the box and concentration topologies, where we agree that $0X = *$ for any mm-space $X$.
\end{lem}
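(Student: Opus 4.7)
My plan is to split the distance via the triangle inequality
\[
d(t_n X_n,\, tX) \leq d(t_n X_n,\, t_n X) + d(t_n X,\, tX),
\]
where $d$ denotes either $\square$ or $\conc$, and then verify that each summand vanishes when $X_n \to X$ in the appropriate topology and $t_n \to t$ in $[0, 1]$. The first summand will be controlled by a Lipschitz-type estimate in the space variable; the second by a uniform continuity estimate in the scale variable.

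For the space variable I would prove
\[
\square(tX, tY) \leq \square(X, Y) \quad\text{and}\quad \conc(tX, tY) \leq \conc(X, Y)
\]
for every $t \in [0, 1]$ and every pair $X, Y \in \X$. The box inequality is immediate from the parameter definition, since any admissible triple $(\varphi, \psi, I_0)$ for $\square(X, Y) < \varepsilon$ remains admissible for $\square(tX, tY)$, with the distortion only rescaled by the factor $t \leq 1$. The observable inequality reduces to the identity $\Lip_1(tX) = t\,\Lip_1(X)$ (valid for $t > 0$), combined with the elementary scaling $\kf^{\mathcal{L}^1}(tf, tg) \leq \max(t, 1)\,\kf^{\mathcal{L}^1}(f, g)$ of the Ky Fan metric, applied to parameter pullbacks; the case $t = 0$ is trivial since both sides collapse to $*$.

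For the scale variable, fix $X$ and $s, t \in [0, 1]$, and use a single parameter $\varphi$ of $X$ on both sides (possible since $tX$ and $sX$ share the measure $\mu_X$). For each $\kappa > 0$, a Borel set $A \subset X$ with $\mu_X(A) \geq 1 - \kappa$ and $\diam A \leq \diam(X; 1-\kappa) =: D$ yields, via the parameter coupling $(\varphi, \varphi, \varphi^{-1}(A))$,
\[
\square(tX, sX) \leq \max\bigl(|t-s|\,D,\ \kappa\bigr),
\]
the same estimate persisting at $s = 0$ thanks to Corollary \ref{box_1pt}. For $\conc$, the observable diameter furnishes, for each $f \in \Lip_1(X)$, a constant $c = c(f, \kappa) \in \R$ with $\mu_X(\{|f - c| > \OD(X; -\kappa)\}) \leq \kappa$, and then comparing $t\varphi^*f$ to $s\varphi^*f$ (shifted by $(t-s)c$) yields
\[
\conc(tX, sX) \leq \max\bigl(|t-s|\,\OD(X;-\kappa),\ \kappa\bigr),
\]
valid for all $s, t \in [0, 1]$.

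Assembling the two estimates, the first summand of the triangle split is at most $d(X_n, X) \to 0$, and the second tends to zero by first letting $\kappa \to 0$ and then $n \to \infty$ in the scale estimate with $s = t_n$. The most delicate point I anticipate is the scale estimate for $\conc$ at the endpoint $s = 0$: the naive identity $\Lip_1(sX) = s\,\Lip_1(X)$ breaks there because $\Lip_1(*) = \R$ rather than $\{0\}$. The issue will be resolved by noting that $\psi^*\Lip_1(*)$ (the set of constants on $I$) is automatically contained in $\varphi^*\Lip_1(tX)$, so one direction of the Hausdorff distance vanishes and the other is precisely the observable-diameter bound displayed above.
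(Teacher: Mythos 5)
Your proposal is correct and follows essentially the same route as the paper: the same triangle-inequality split $d(t_nX_n,tX)\leq d(t_nX_n,t_nX)+d(t_nX,tX)$, with the first summand controlled by the monotonicity of $t\mapsto d(tX,tY)$ and the second by a tightness-type diameter bound. The only cosmetic differences are that the paper reduces the scale estimate to a finite approximation $\dot{X}$ (Proposition \ref{mmg4.20}) where you use a set realizing the partial diameter, and that you write out the $\conc$ case explicitly (where one could also just invoke $\conc\leq\square$ for the scale summand) while the paper dismisses it as parallel.
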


\begin{proof}
Since the discussions are parallel, we prove only for the box topology. We take any $\{(X_n, t_n)\}_{n=1}^\infty \subset \X \times [0,1]$ converging to $(X, t)$. Then we have
\[
\square(t_nX_n, tX) \leq \square(t_nX_n, t_nX) + \square(t_nX, tX) \leq \square(X_n, X) + \square(t_nX, tX)
\]
since the map $t \mapsto \square(tX, tY)$ is nondecreasing. It is sufficient to prove that $t_nX$ $\square$-converges to $tX$ as $t_n \to t$ for a fixed mm-space $X$. Let $\ep > 0$ be a positive real number. There exists a finite mm-space $\dot{X}$ such that $\square(X, \dot{X}) < \ep$ by Proposition \ref{mmg4.20}.
Then we have
\[
\square(t_n\dot{X}, t\dot{X}) \leq |t_n - t|\diam{\dot{X}}.
\]
Indeed, letting $\pi := (\id_{\dot{X}}, \id_{\dot{X}})_* \mu_{\dot{X}}$ and $S := \{(x,x) \ | \ x \in \dot{X}\}$, we have
\[
\pi(S) = 1 \quad \text{and} \quad \dis{S} = |t_n - t|\diam{\dot{X}}.
\]
Note that this is true even if $t_n = 0$ or $t = 0$. Thus we have
\[
\limsup_{n \to \infty} \square(t_nX, tX) \leq \limsup_{n \to \infty} \square(t_n\dot{X}, t\dot{X}) +2\ep \leq 2\ep.
\]
As $\ep \to 0$, we obtain the conclusion.
\end{proof}

\begin{lem}\label{retract_py}
The map $H \colon \Pi \times [0,1] \to \Pi$ defined by
\[
H(\cP, t) := F_t(\cP), \quad \text{where} \quad F_t(s) := \min\{s, \frac{t}{1-t}\},
\]
for $\cP \in \Pi$ and $t \in [0, 1]$ is continuous with respect to the weak topology, where we agree that $F_1(\cP) = \cP$ and $F_0(\cP) = *$ for any pyramid $\cP$.
\end{lem}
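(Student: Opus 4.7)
The plan is to prove the lemma by first establishing an $\X$-level analogue and then promoting it to $\Pi$ via sequential compactness. Set $r_t := t/(1-t)$ for $t \in [0,1)$ and $r_1 := +\infty$, so that each $F_t(s) = \min\{s, r_t\}$ is a continuous, nondecreasing, $1$-Lipschitz metric preserving function satisfying $|F_t(s) - F_{t'}(s)| \leq |r_t - r_{t'}|$. A direct adaptation of the proof of Lemma \ref{retract_mm}, using Proposition \ref{mmg4.20} to approximate by a finite mm-space and the diagonal coupling for the $t$-variable, first yields that $\X \times [0,1] \ni (X, t) \mapsto F_t(X) \in (\X, \square)$ is continuous.

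Given a convergent sequence $(\cP_n, t_n) \to (\cP_0, t_0)$ in $\Pi \times [0, 1]$, I would use that $(\Pi, \rho)$ is compact metrizable and pass to a subsequence along which $F_{t_n}(\cP_n) \to \mathcal{Q}$; it then suffices to show $\mathcal{Q} = F_{t_0}(\cP_0)$. For the inclusion $F_{t_0}(\cP_0) \subseteq \mathcal{Q}$, I would take $Y \prec F_{t_0}(X)$ with $X \in \cP_0$ (dense in the $\square$-closure defining $F_{t_0}(\cP_0)$), choose $X_n \in \cP_n$ with $\square(X_n, X) \to 0$ using $\cP_n \to \cP_0$, apply the $\X$-level continuity together with Theorem \ref{Py:thm}(3) to obtain $\cP_{F_{t_n}(X_n)} \to \cP_{F_{t_0}(X)}$ weakly, and finally extract $Y_n \in \cP_{F_{t_n}(X_n)} \subseteq F_{t_n}(\cP_n)$ with $\square(Y_n, Y) \to 0$. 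A standard diagonal argument extends this to arbitrary $Y \in F_{t_0}(\cP_0)$.

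The reverse inclusion $\mathcal{Q} \subseteq F_{t_0}(\cP_0)$ is where the real work lies. For $Y \in \mathcal{Q}$, I would select $X_n \in \cP_n$ and $Y_n \prec F_{t_n}(X_n)$ with $\square(Y_n, Y) \to 0$. Since $F_{t_n}(s) \leq s$, the identity on $X_n$ realizes $F_{t_n}(X_n) \prec X_n$, so $Y_n \in \cP_{X_n} \subseteq \cP_n$, and condition (2) of weak convergence forces $Y \in \cP_0$. The crucial additional input is the diameter bound $\diam Y_n \leq \diam F_{t_n}(X_n) \leq r_{t_n}$, which I would feed into the following auxiliary statement: if $Z_n \to Z$ in $\square$ with $\diam Z_n \leq R_n \to R$, then $\diam Z \leq R$. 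This is proved directly from the definition of $\square$: an $\ep$-parameter coupling exhibits a $(1-\ep)$-measure subset of $Z$ of diameter $\leq R_n + \ep$; if $Z = \supp \mu_Z$ contained two points at distance $>R$, their positive-measure neighborhoods would eventually both meet every such subset, contradicting the uniform diameter bound. Applied here, $\diam Y \leq r_{t_0}$, so $F_{t_0}$ leaves the metric of $Y$ unchanged; together with $Y \in \cP_0$ this gives $Y = F_{t_0}(Y) \in \cP_{F_{t_0}(Y)} \subseteq F_{t_0}(\cP_0)$. The cases $t_0 = 0$ (then $\diam Y = 0$, so $Y = *$, matching the convention $F_0(\cP_0) = \{*\}$) and $t_0 = 1$ (then the diameter constraint is vacuous and $F_1 = \id$ makes $F_1(\cP_0) = \cP_0$) are absorbed uniformly.

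The principal difficulty will be the auxiliary diameter statement, since $\diam$ on $(\X, \square)$ is merely lower semicontinuous in general; the bound in the limit must exploit the standing convention $Y = \supp \mu_Y$. With this in hand, every other ingredient reduces to already-cited results and routine diagonal arguments.
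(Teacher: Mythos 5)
Your proof is correct, but it takes a genuinely different route from the paper's. The paper disposes of the case $t_0>0$ in one line by invoking the main theorem of \cite{comts} (Corollary 1.5 there), which gives weak convergence $F_{t_n}(\cP_n)\to F_{t_0}(\cP_0)$ for general continuous nondecreasing metric preserving functions, and then treats only the degenerate case $t_0=0$ by hand: Lemma \ref{ODtrans} yields $\OD(F_{t_n}(\cP_n);-2\kappa)\le 4t_n/(1-t_n)\to 0$, and Theorem \ref{Levyfam} forces weak convergence to $*$. Your argument is instead self-contained: the $\X$-level continuity of $(X,t)\mapsto F_t(X)$, compactness of $(\Pi,\rho)$ to extract a subsequential limit $\mathcal{Q}$, and a double inclusion to identify $\mathcal{Q}=F_{t_0}(\cP_0)$. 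The reverse inclusion is where your approach diverges most: the observation that any $Y_n\prec F_{t_n}(X_n)$ lies in $\cP_n$ and satisfies $\diam Y_n\le t_n/(1-t_n)$, combined with lower semicontinuity of $\diam$ under $\square$-convergence, exploits the specific truncation form of $F_t$ in a way the general theorem of \cite{comts} does not; your auxiliary diameter statement is exactly the inequality $\diam Y\le\liminf_n\diam Y_n$, i.e.\ precisely lower semicontinuity, so it is less of a ``principal difficulty'' than you suggest and your sketch of it is already a complete proof. What you lose is generality (the argument would not cover arbitrary metric transformations as \cite{comts} does); what you gain is independence from a substantial external result, and your $t_0=0$ case via $\diam Y=0$ subsumes the paper's observable-diameter computation.
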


\begin{proof}
We take any $\{(\cP_n, t_n)\}_{n=1}^\infty \subset \Pi \times [0,1]$ converging to $(\cP, t)$.
The main result of \cite{comts} implies that $F_{t_n}(\cP_n)$ converges weakly to $F_t(\cP)$ if $t > 0$ (see \cite{comts}*{Corollary 1.5}). We check only that if $t_n \to 0$, then $F_{t_n}(\cP_n)$ converges weakly to $*$ as $n \to \infty$. By Lemma \ref{ODtrans}, for any $\kappa > 0$, we have
\[
\OD(F_{t_n}(\cP_n); -2\kappa) \leq 4F_{t_n}(\OD(\cP_n; -\kappa)) \leq \frac{4t_n}{1-t_n} \to 0
\]
as $n \to \infty$. Therefore $F_{t_n}(\cP_n)$ converges weakly to $*$ as $n \to \infty$ by Theorem \ref{Levyfam}. The proof is completed.
\end{proof}

\begin{rem}
The map $(\cP, t) \mapsto t\cP$ is discontinuous, in fact, $t\X = \X$ for any $t > 0$. One reason for this is that the function $s \mapsto ts$ does not converge uniformly to 0 as $t \to 0$. On the other hand, the map $(X, t) \mapsto F_t(X)$, where $F_t$ in above lemma, is also continuous with respect to both the box and concentration topologies.
\end{rem}

\begin{proof}[Proof of Theorem \ref{contra}]
The maps in Lemmas \ref{retract_mm} and \ref{retract_py} are deformation retractions of $\X$ and $\Pi$ onto $\{*\}$, respectively. Therefore these are contractible.
\end{proof}

\section{$(\X, \square)$ is a geodesic space}\label{sec:geod}

The aim of this section is to prove Theorem \ref{box_geod}. We prepare several tools due to the optimal transport theory.

\begin{dfn}[$\ep$-Subtransport plan]
Let $\mu$ and $\nu$ be two Borel probability measures on a metric space $X$. A Borel measure $\pi$ on $X \times X$ is called a {\it subtransport plan} between $\mu$ and $\nu$ provided that ${\pr_0}_* \pi \leq \mu$ and ${\pr_1}_* \pi \leq \nu$, where $\pr_i$, $i=0, 1$, is the projection given by $(x_0, x_1) \mapsto x_i$. For a subtransport plan $\pi$, the {\it deficiency} of $\pi$ is defined to be
\[
\df{\pi} := 1 - \pi(X \times X).
\]
A subtransport plan $\pi$ is called an {\it $\ep$-subtransport plan} if it satisfies
\[
\supp{\pi} \subset \{ (x, x') \in X \times X \mid d_X(x, x') \leq \ep \}.
\]
\end{dfn}

\begin{thm}[Strassen's theorem \cite{V}*{Corollary 1.28}]\label{Strassen}
For any two Borel probability measures $\mu$ and $\nu$ on a complete separable metric space $X$, we have
\[
\prok(\mu, \nu) = \inf{\left\{ \ep > 0 \midd \begin{array}{l} \text{There exists an } \ep\text{-subtransport plan } \pi \\ \text{between } \mu \text{ and } \nu \text{ with } \df{\pi} \leq \ep \end{array} \right\}}.
\]
\end{thm}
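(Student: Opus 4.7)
The plan is to establish the two inequalities separately; the direction $\prok(\mu,\nu) \leq \inf\{\cdots\}$ is a direct manipulation, while the reverse inequality is the substantive half and requires constructing a subtransport plan out of the Prokhorov condition.

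For the easier direction, I would fix an $\ep$-subtransport plan $\pi$ with $\df\pi \leq \ep$ and an arbitrary Borel set $A \subset X$. The support condition forces every $(x,x') \in \supp\pi$ with $x' \in A$ to satisfy $x \in \overline{U_\ep(A)} \subset U_{\ep+\delta}(A)$ for each $\delta > 0$, so combining ${\pr_0}_*\pi \leq \mu$, ${\pr_1}_*\pi \leq \nu$, and $\pi(X \times X) \geq 1 - \ep$ yields
\[
\mu(U_{\ep+\delta}(A)) \;\geq\; \pi(U_{\ep+\delta}(A) \times X) \;\geq\; \pi(X \times A) \;\geq\; \nu(A) - \df\pi \;\geq\; \nu(A) - (\ep + \delta),
\]
whence $\prok(\mu,\nu) \leq \ep + \delta$ and then $\prok(\mu,\nu) \leq \ep$ on letting $\delta \to 0$.

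For the substantive direction, I would assume $\prok(\mu,\nu) < \ep$, i.e.\ $\mu(U_\ep(A)) \geq \nu(A) - \ep$ for every Borel $A$, and construct a suitable plan by discretization plus a weak limit. First I would approximate $\mu, \nu$ weakly by atomic probability measures $\mu_n, \nu_n$ supported on finite sets with a common atomic weight $1/N_n$ (obtained by pushing forward onto the centers of a fine partition and then subdividing atoms), in such a way that the approximate Prokhorov inequality $\mu_n(U_{\ep+\eta_n}(A)) \geq \nu_n(A) - (\ep + \eta_n)$ holds with $\eta_n \to 0$. In the finite equal-weights setting this estimate is exactly the deficiency version of Hall's marriage condition on the bipartite graph with an edge between $\mu_n$- and $\nu_n$-atoms at distance $\leq \ep + \eta_n$, so the deficiency form of Hall's theorem produces a partial matching covering all but at most $(\ep+\eta_n)N_n$ of the $\nu_n$-atoms, which I would convert into an $(\ep+\eta_n)$-subtransport plan $\pi_n$ between $\mu_n$ and $\nu_n$ with $\df \pi_n \leq \ep + \eta_n$.

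Passing to the limit, the family $\{\pi_n\}$ is tight because its marginals are dominated by the tight families $\{\mu_n\}, \{\nu_n\}$ (here completeness and separability of $X$ enter via Prokhorov's theorem), so there is a weakly convergent subsequence with limit $\pi$. The support condition $\supp\pi \subset \{d_X \leq \ep'\}$ passes to the limit for every $\ep' > \ep$ since that set is closed, the marginal inequalities pass by weak convergence, and $\df \pi \leq \ep$ follows by lower semicontinuity of total mass on open sets. A diagonal argument over $\ep' \downarrow \ep$ then delivers the required $\ep$-subtransport plan. I expect the main obstacle to be the careful bookkeeping around the open-versus-closed neighborhood distinction: one has to propagate the strict Prokhorov inequality $\mu(U_\ep(A)) \geq \nu(A) - \ep$ through the discretization step and through the weak limit without the error terms being absorbed into the main $\ep$, and one must verify that the limiting support condition on the closed $\ep$-neighborhood matches exactly what the definition of an $\ep$-subtransport plan demands.
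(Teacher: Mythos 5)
Your proposal is correct, but note first that the paper does not prove this statement at all: it is quoted as a known result and attributed to Villani \cite{V}*{Corollary 1.28}, where it is obtained from a Kantorovich-type duality for the cost $c=\mathbf{1}_{\{d_X>\ep\}}$ (a Hahn--Banach/minimax argument), the subtransport-plan formulation being a repackaging of the resulting optimal coupling. Your route is the classical combinatorial one (as in Dudley): reduce to finitely supported measures with equal atomic weights, observe that the one-sided Prokhorov inequality is exactly the defect Hall condition on the bipartite graph of atoms at distance at most $\ep+\eta_n$, extract a near-perfect matching, and pass to a weak limit using tightness and Prokhorov's theorem. This buys elementarity (no functional-analytic duality) at the price of the discretization and limit bookkeeping you anticipate; the duality route is shorter once the general duality theorem is available and also yields the coupling version $\inf_\pi\pi[d_X>\ep]\le\ep$ directly. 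Two small points in your write-up: the diagonal argument over $\ep'\downarrow\ep$ is unnecessary, since $\supp\pi\subset\{d_X\le\ep'\}$ for every $\ep'>\ep$ already gives $\supp\pi\subset\bigcap_{\ep'>\ep}\{d_X\le\ep'\}=\{d_X\le\ep\}$; and the bound $\df\pi\le\ep$ does not follow from ``lower semicontinuity of total mass on open sets'' (portmanteau on the open set $X\times X$ gives $\pi(X\times X)\le\liminf_k\pi_{n_k}(X\times X)$, which is the unhelpful direction) but rather from the fact that tightness forces convergence against $C_{\mathrm{b}}$ including $f\equiv 1$, so the total masses converge and $\pi(X\times X)=\lim_k\pi_{n_k}(X\times X)\ge 1-\ep$. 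Neither affects the validity of the argument.
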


Let $\Cb(X)$ be the set of all continuous bounded real-valued functions on a metric space $X$, which is a Banach space with the supremum norm $\|\cdot\|_\infty$. It is well-known that the map
\[
X \ni x \mapsto d_X(x, x') - d_X(\bar{x}, x') \in \Cb(X),
\]
where $\bar{x}$ is a fixed point in $X$, is isometric. This map is called the Kuratowski embedding.

\begin{proof}[Proof of Theorem \ref{box_geod}]
Take any two mm-spaces $X_0$ and $X_1$. We construct a midpoint $X_{\frac{1}{2}}$ between $X_0$ and $X_1$, that is,
\[
\square(X_0, X_{\frac{1}{2}}) = \square(X_1, X_{\frac{1}{2}}) = \frac{1}{2} \square(X_0, X_1)
\]
(see \cite{BBI}*{Theorem 2.4.16}). Let $r_n := \square(X_0, X_1) + n^{-1}$. Since
\[
\gp(2X_0, 2X_1) = \square(X_0, X_1) < r_n,
\]
there exists a complete separable metric space $Z_n$ such that both $2X_0$ and $2X_1$ are embedded in $Z_n$ isometrically and
\[
\prok^{Z_n}(\mu_{X_0}, \mu_{X_1}) < r_n.
\]
Moreover, by the Kuratowski embedding, $Z_n$ can be assumed to be a Banach space with norm $\|\cdot\|$. By Strassen's theorem, there exists an $r_n$-subtransport plan $\pi_n$ between $\mu_{X_0}$ and $\mu_{X_1}$ over $(Z_n, \|\cdot\|)$ with $\df{\pi_n} \leq r_n$. We define a map $M \colon Z_n \times Z_n \to Z_n$ by
\[
M(x_0, x_1) := \frac{x_0 + x_1}{2}
\]
and define a probability measure $\mu_{\frac{1}{2}, n}$ on $Z_n$ by
\begin{align*}
\mu_{\frac{1}{2}, n} & := M_* \pi_n  + \frac{1}{2}(\mu_{X_0}-{\pr_0}_* \pi_n)+\frac{1}{2}(\mu_{X_1}-{\pr_1}_* \pi_n).
\end{align*}
Let us prove
\[
\prok^{Z_n}(\mu_{X_i}, \mu_{\frac{1}{2}, n}) \leq \frac{1}{2}r_n, \quad i=0,1.
\]

We first find a subtransport plan between $\mu_{X_0}$ and $\mu_{\frac{1}{2}, n}$. We define a measure $\pi_{0, n}$ on $Z_n \times Z_n$ by
\[
\pi_{0,n} := (\pr_0, M)_* \pi_n + \frac{1}{2} (\id_{Z_n}, \id_{Z_n})_* (\mu_{X_0} - {\pr_0}_* \pi_n).
\]
The measure $\pi_{0,n}$ is a subtransport plan between $\mu_{X_0}$ and $\mu_{\frac{1}{2},n}$. Indeed,
\begin{align*}
{\pr_0}_* \pi_{0,n} & = {\pr_0}_* \pi_n + \frac{1}{2} (\mu_{X_0} - {\pr_0}_* \pi_n) \leq \mu_{X_0}, \\
{\pr_1}_* \pi_{0,n} & = M_* \pi_n + \frac{1}{2} (\mu_{X_0} - {\pr_0}_* \pi_n) \leq \mu_{\frac{1}{2},n}.
\end{align*}
Moreover, we have
\[
\pi_{0,n}(Z_n \times Z_n) = \pi_n(Z_n \times Z_n) + \frac{1}{2}(1 - \pi_n(Z_n \times Z_n)) \geq 1 - \frac{1}{2}r_n.
\]
We verify that $\pi_{0,n}$ is a $(1/2)r_n$-subtransport plan. Take any $(x, y) \in \supp{\pi_{0,n}}$ with $x \neq y$. There exists $(x_0, x_1) \in \supp{\pi_n}$ such that
\[
x = x_0 \quad \text{ and } \quad y = \frac{x_0 + x_1}{2}.
\]
Then we have
\[
\|x - y\| = \frac{1}{2}\|x_0 - x_1\| \leq \frac{1}{2} r_n.
\]
Therefore $\pi_{0,n}$ is a $(1/2)r_n$-subtransport plan with $\df{\pi_{0,n}} \leq (1/2)r_n$, which implies that
\[
\prok^{Z_n}(\mu_{X_0}, \mu_{\frac{1}{2},n}) \leq \frac{1}{2}r_n
\]
by Strassen's theorem again. Similarly, letting
\[
\pi_{1,n} := (M, \pr_1)_* \pi_n + \frac{1}{2} (\id_{Z_n}, \id_{Z_n})_* (\mu_{X_1} - {\pr_1}_* \pi_n),
\]
the measure $\pi_{1,n}$ is a $(1/2)r_n$-subtransport plan between $\mu_{\frac{1}{2},n}$ and $\mu_{X_1}$ with $\df{\pi_{1,n}} \leq (1/2)r_n$ and hence
\[
\prok^{Z_n}(\mu_{X_1}, \mu_{\frac{1}{2},n}) \leq \frac{1}{2}r_n.
\]

Defining an mm-space
\[
X_{\frac{1}{2},n} := (Z_n, \frac{1}{2}\|\cdot\|, \mu_{\frac{1}{2}, n}),
\]
this satisfies
\[
\square(X_i, X_{\frac{1}{2},n}) = \gp(2X_i, 2X_{\frac{1}{2}, n}) \leq \prok^{Z_n}(\mu_{X_i}, \mu_{\frac{1}{2}, n}) \leq\frac{1}{2}r_n, \quad i=0,1.
\]
We prove that $\{X_{\frac{1}{2}, n}\}_{n=1}^\infty$ is precompact with respect to the box topology. It is sufficient to prove that for any $\ep > 0$ there exists a positive number $\Delta(\ep)$ such that for any $n$ we have a finite subset $\cN_n$ of $X_{\frac{1}{2}, n}$ with
\[
\mu_{\frac{1}{2}, n} (U_{\ep}(\cN_n)) \geq 1 - \ep, \quad \#\cN_n \leq \Delta(\ep), \quad \text{and} \quad \diam{\cN_n} \leq \Delta(\ep)
\]
(see \cite{MMG}*{Lemma 4.28}). Take a sufficiently small $\ep > 0$ such that $1 - r_n -2\ep > 0$ for every sufficiently large $n$, here $r_n \to \square(X_0, X_1) < 1$ as $n \to \infty$. There exist a finite subset $\cN_0$ of $X_0$ and a finite subset $\cN_1$ of $X_1$ such that
\[
\mu_{X_0}(U_\ep(\cN_0)) \geq 1-\ep \quad \text{ and } \quad \mu_{X_1}(U_\ep(\cN_1)) \geq 1-\ep.
\]
Let $\cZ_0$ and $\cZ_1$ be the images of $\cN_0$ and $\cN_1$ by the embeddings to $Z_n$, respectively, and define
\[
\cN_n := \cZ_0 \cup M(\cZ_0\times \cZ_1) \cup \cZ_1 \subset Z_n.
\]
Note that $\# \cN_n \leq \#\cN_0 \cdot \#\cN_1 + \#\cN_0 + \#\cN_1 $.
We have
\begin{align*}
& \pi_n(U_{2\ep}(\cZ_0) \times U_{2\ep}(\cZ_1)) \\
& \geq \pi_n(Z_n \times Z_n) - {\pr_0}_*\pi_n(Z_n \setminus U_{2\ep}(\cZ_0)) - {\pr_1}_*\pi_n(Z_n \setminus U_{2\ep}(\cZ_1)) \\
& \geq 1 - r_n - \mu_{X_0}(X_0 \setminus U_{\ep}(\cN_0)) - \mu_{X_1}(X_1 \setminus U_{\ep}(\cN_1)) \geq 1 - r_n -2\ep > 0,
\end{align*}
which implies that there exists a pair $(\bar{x}, \bar{y}) \in \supp\pi_n \cap (U_{2\ep}(\cZ_0) \times U_{2\ep}(\cZ_1))$.
Moreover,
\[
\|M(x_0, x_1) - M(\bar{x}, \bar{y})\| \leq \frac{1}{2} \|x_0 - \bar{x}\| + \frac{1}{2}\|x_1 - \bar{y}\| \leq \frac{1}{2} \diam{\cZ_0} + \frac{1}{2} \diam{\cZ_1} + 2\ep
\]
for every $(x_0, x_1) \in \cZ_0 \times \cZ_1$. Combining these implies that
\begin{align*}
\diam{\cN_n} & \leq \frac{3}{2}\diam{\cZ_0} + \frac{3}{2}\diam{\cZ_1} + \|\bar{x} - \bar{y}\| + 4\ep \\
& \leq 3\diam{\cN_0} + 3\diam{\cN_1} + r_n + 4\ep \\
& \leq 3\diam{\cN_0} + 3\diam{\cN_1} + 2.
\end{align*}
We verify that
\[
\mu_{\frac{1}{2}, n} (U_{2\ep}(\cN_n)) \geq 1 - 2\ep.
\]
Since $U_{2\ep}(M(\cZ_0\times\cZ_1)) \supset M(U_{2\ep}(\cZ_0) \times U_{2\ep}(\cZ_1))$, we have
\begin{align*}
& \mu_{\frac{1}{2}, n}(U_{2\ep}(\cN_n))  \\
\geq & M_* \pi_n(U_{2\ep}(M(\cZ_0\times\cZ_1)))  + \frac{1}{2}(\mu_{X_0}-{\pr_0}_* \pi_n)(U_{2\ep}(\cZ_0))+\frac{1}{2}(\mu_{X_1}-{\pr_1}_* \pi_n)(U_{2\ep}(\cZ_1)) \\
\geq & \pi_n(U_{2\ep}(\cZ_0) \times U_{2\ep}(\cZ_1))  + \frac{1}{2}(\mu_{X_0}-{\pr_0}_* \pi_n)(U_{2\ep}(\cZ_0))+\frac{1}{2}(\mu_{X_1}-{\pr_1}_* \pi_n)(U_{2\ep}(\cZ_1)) \\
\geq & \pi_n(Z_n \times Z_n) - {\pr_0}_*\pi_n(Z_n \setminus U_{2\ep}(\cZ_0)) - {\pr_1}_*\pi_n(Z_n \setminus U_{2\ep}(\cZ_1)) \\
& + \frac{1}{2}(\mu_{X_0}-{\pr_0}_* \pi_n)(U_{2\ep}(\cZ_0))+\frac{1}{2}(\mu_{X_1}-{\pr_1}_* \pi_n)(U_{2\ep}(\cZ_1)) \\
\geq & \mu_{X_0}(U_{\ep}(\cN_0)) + \mu_{X_1}(U_{\ep}(\cN_1)) -1 \geq 1 - 2\ep.
\end{align*}
Thus $\{X_{\frac{1}{2}, n}\}_{n=1}^\infty$ is precompact. There exists a $\square$-convergent subsequence of $\{X_{\frac{1}{2}, n}\}_{n=1}^\infty$ and its limit, denote by $X_{\frac{1}{2}}$, satisfies
\[
\square(X_i, X_{\frac{1}{2}}) \leq \frac{1}{2} \square(X_0, X_1), \quad i=0,1.
\]
The proof is completed.
\end{proof}

\begin{rem}
\begin{enumerate}
\item The Gromov-Prokhorov distance $\gp$ is also geodesic on $\X$.
\item Any geodesic metric space is locally path connected clearly. Hence $\X$ is locally path connected in the box topology.
\end{enumerate}
\end{rem}

On $(\X, \square)$, a geodesic between two distinct mm-spaces is never unique and it branches everywhere.

\begin{thm}\label{thm:multigeod}
For any two mm-spaces which are not mm-isomorphic, there exists a family of uncountably many pairwise-disjoint $\square$-geodesics between them.
Here, two geodesics are disjoint if they do not intersect anywhere except the endpoints.
In particular, every $\square$-geodesic branches everywhere.
\end{thm}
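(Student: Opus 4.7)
The plan is to construct, for each $\lambda$ in an uncountable index set, a distinct $\square$-geodesic $\gamma_\lambda \colon [0,1] \to \X$ from $X_0$ to $X_1$ by modifying the midpoint construction in the proof of Theorem \ref{box_geod}. There, a midpoint was obtained by Kuratowski-embedding $X_0, X_1$ into a Banach space $Z_n$, fixing an $r_n$-subtransport plan $\pi_n$, and pushing forward by $M(x_0, x_1) = (x_0 + x_1)/2$; the key observation is that this midpoint map has considerable freedom. I would enlarge $Z_n$ to $\tilde Z_n := Z_n \oplus \R$ equipped with the $\ell^\infty$-norm $\|(z, r)\| := \max(\|z\|_{Z_n}, |r|)$, which still isometrically contains $X_0$ and $X_1$ as $Z_n \oplus \{0\}$. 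For each $\lambda \in [0, 1/2]$ introduce the midpoint map
\[ M_\lambda(x_0, x_1) := \Bigl( \tfrac{1}{2}(x_0 + x_1),\ \lambda \|x_0 - x_1\|_{Z_n} \Bigr). \]
Since $\lambda \|x_0 - x_1\|_{Z_n} \leq \|x_0 - x_1\|_{Z_n}/2$, the point $M_\lambda(x_0, x_1)$ is an $\ell^\infty$-midpoint of $(x_0, 0)$ and $(x_1, 0)$ in $\tilde Z_n$. Running the argument of Theorem \ref{box_geod} verbatim with $M$ replaced by $M_\lambda$ yields a midpoint $X_{1/2}^{(\lambda)}$ of $X_0$ and $X_1$.

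To produce full geodesics I would extend this to $t$-interpolation via
\[ M_{t, \lambda}(x_0, x_1) := \bigl( (1-t) x_0 + t x_1,\ \lambda \min\{t, 1-t\} \|x_0 - x_1\|_{Z_n} \bigr), \quad t \in (0, 1), \]
which remain $t$-interpolation points in $(\tilde Z_n, \|\cdot\|)$. The same $\varepsilon$-subtransport plan argument and precompactness argument from the proof of Theorem \ref{box_geod} produce mm-spaces $X_{t, \lambda, n}$ with $\square(X_0, X_{t, \lambda, n}) \leq t r_n$ and $\square(X_{t, \lambda, n}, X_1) \leq (1 - t) r_n$. A standard diagonal extraction over a countable dense set of $t$ then gives a $\square$-geodesic $\gamma_\lambda \colon [0, 1] \to \X$ from $X_0$ to $X_1$.

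For pairwise disjointness, the geodesic property forces $\gamma_\lambda(s) = \gamma_{\lambda'}(t) \Rightarrow s = t$, so it suffices to show that $\lambda \mapsto \gamma_\lambda(t)$ is injective on an uncountable set for each $t \in (0, 1)$. The second-coordinate projection $\pi_2 \colon \tilde Z_n \to \R$ is $1$-Lipschitz, and $(\pi_2)_* \mu_{X_{t, \lambda, n}}$ is essentially the distribution of $\lambda \min\{t, 1-t\} \|x_0 - x_1\|_{Z_n}$ under $\pi_n$; its support and spread scale linearly with $\lambda$. This scaling is detected by an observable-diameter-type invariant of $\gamma_\lambda(t)$ that is preserved by the $\square$-limit via Theorem \ref{lim_form}, and monotonicity in $\lambda$ separates uncountably many $\lambda$'s. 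The branching statement follows formally: given any geodesic $\gamma$ and an interior time $t_0 \in (0, 1)$, apply the main assertion to the pair $(\gamma(t_0), X_1)$ and concatenate the resulting disjoint geodesics with $\gamma|_{[0, t_0]}$ to produce uncountably many branches at $\gamma(t_0)$.

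The main obstacle is that an mm-isomorphism between $X_t^{(\lambda)}$ and $X_t^{(\lambda')}$ need not respect the coordinate projection $\pi_2$, so one must verify carefully that the ``second-coordinate spread'' encodes a genuinely intrinsic mm-invariant. A natural way around this is to tie the second coordinate to an intrinsic metric functional recoverable (up to uniform error) from the metric on $M_\lambda(Z_n \times Z_n)$ itself, so that the $\lambda$-dependence manifests as a genuine change in the collection of pushforward measures of $\mu_{\gamma_\lambda(t)}$ under all $1$-Lipschitz functions. Once this invariant is set up, the scheme above delivers the required uncountable pairwise-disjoint family of $\square$-geodesics.
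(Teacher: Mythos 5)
Your overall idea---perturb a geodesic by attaching an auxiliary ``height'' that vanishes at the endpoints and scales with a parameter $\lambda$---is the right one, and it is essentially the idea behind the paper's proof. But your specific realization has a genuine gap at exactly the point you flag as the ``main obstacle,'' and it is worse than a missing verification: the construction can degenerate. Your second coordinate is $\lambda\min\{t,1-t\}\,\|x_0-x_1\|_{Z_n}$ evaluated along the subtransport plan $\pi_n$. If $\pi_n$ has full mass and the displacement $\|x_0-x_1\|$ is ($\pi_n$-essentially) constant on $\supp\pi_n$ --- which can happen, e.g.\ when an optimal coupling is deterministic with constant displacement --- then the second coordinate of $M_{t,\lambda}$ is a constant, a constant offset in one factor of an $\ell^\infty$-sum does not change the induced metric on the support of the measure, and $X_{t,\lambda,n}$ is literally the same mm-space for every $\lambda$. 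So injectivity in $\lambda$ is not merely unproven; it is false for this choice of height function, and no observable-diameter invariant can rescue it. (Separately, ``running the argument verbatim'' is optimistic: to get a geodesic rather than a midpoint you must also bound $\square(X_{s,\lambda,n},X_{t,\lambda,n})\le|s-t|r_n$ for interior pairs $s,t$, including the coupling of the leftover masses $(1-t)(\mu_{X_0}-{\pr_0}_*\pi_n)+t(\mu_{X_1}-{\pr_1}_*\pi_n)$, which is not controlled by the distances to the endpoints alone.)

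The paper avoids both problems by decoupling the perturbation from the ambient construction. It first produces one geodesic $\{X_t\}$ by Theorem \ref{box_geod}, and then sets $Y_{s,t}:=X_t\times_\infty sf(t)Z$ with $f(t)=r\min\{t,1-t\}$ and $Z=([0,1],|\cdot|,\mathcal{L}^1)$ a \emph{fixed nondegenerate} mm-space. Lemma \ref{lem:geod_prod} shows by a short coupling computation (the diagonal coupling $\pi\otimes\mu_Z$ on the $Z$-factor) that each $\{Y_{s,t}\}_{t}$ is again a geodesic, with no need to revisit the interpolation or precompactness arguments. Because $Z$ has positive diameter and is carried with an independent product measure, the spaces genuinely change with $s$; and distinctness is proved not by an invariant but by rigidity of the Lipschitz order: for $s<s'$ the identity is a $1$-Lipschitz measure-preserving map $Y_{s',t}\to Y_{s,t}$ that is not an isometry, and by the argument of \cite{MMG}*{Proof of Lemma 2.12} an mm-isomorphism would force it to be one. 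If you want to salvage your scheme, the fix is precisely to replace your height $\lambda\|x_0-x_1\|$ by an independent product with a fixed interval, i.e.\ to reduce to the paper's Lemma \ref{lem:geod_prod}, and to replace the invariant-based separation by the Lipschitz-order rigidity argument.
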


In order to construct a family of geodesics, we prepare the following lemma.

\begin{lem}\label{lem:geod_prod}
Let $\{X_t\}_{t \in [0,1]}$ be a $\square$-geodesic from an mm-space $X_0$ to an mm-space $X_1$ and let $r := \square(X_0, X_1)$.
Take any $r$-Lipschitz function $f \colon [0,1] \to [0,+\infty)$ with $f(0) = f(1) = 0$ and any mm-space $Z$ with $0 < \diam{Z} \le 1$ and define an mm-space $Y_t$, $t \in [0,1]$, by
\[
Y_t := X_t \times_\infty f(t)Z.
\]
Then $\{Y_t\}_{t \in [0,1]}$ is also $\square$-geodesic from $X_0$ to $X_1$.
\end{lem}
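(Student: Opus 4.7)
The plan is to show the single upper bound $\square(Y_s,Y_t)\le |s-t|r$ for all $s,t\in[0,1]$; the reverse inequality along the whole path then falls out from the triangle inequality since the endpoints coincide with $X_0$ and $X_1$.

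First I would check the endpoints. Since $f(0)=f(1)=0$, the factor $f(0)Z=f(1)Z$ is mm-isomorphic to the one-point space $*$, and $A\times_\infty *$ is canonically mm-isomorphic to $A$; thus $Y_0=X_0$ and $Y_1=X_1$.

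The main step is the upper bound, which I would prove via the optimal transport characterization in Lemma~\ref{box_opt}. Fix $s,t\in[0,1]$ and take an optimal pair $(\pi_{st},S_{st})\in\Pi(\mu_{X_s},\mu_{X_t})\times \mathcal B(X_s\times X_t)$ realizing
\[
\square(X_s,X_t)=\max\{\dis S_{st},\,1-\pi_{st}(S_{st})\}=|s-t|r.
\]
Using the diagonal coupling on $Z$, define a coupling $\pi\in\Pi(\mu_{Y_s},\mu_{Y_t})$ by
\[
\pi := T_*(\pi_{st}\otimes \mu_Z),\qquad T(x_s,x_t,z):=\bigl((x_s,z),(x_t,z)\bigr),
\]
and set $S:=T(S_{st}\times Z)\subset Y_s\times Y_t$. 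Then $\pi(S)=\pi_{st}(S_{st})$, so $1-\pi(S)\le|s-t|r$. For two points $((x_s,z),(x_t,z)),((x_s',z'),(x_t',z'))\in S$, the $l_\infty$ product metric together with the elementary inequality $|\max\{a,b\}-\max\{c,d\}|\le\max\{|a-c|,|b-d|\}$ yields
\[
\bigl|d_{Y_s}((x_s,z),(x_s',z'))-d_{Y_t}((x_t,z),(x_t',z'))\bigr|
\le \max\bigl\{|d_{X_s}(x_s,x_s')-d_{X_t}(x_t,x_t')|,\;|f(s)-f(t)|\,d_Z(z,z')\bigr\}.
\]
The first entry is at most $\dis S_{st}\le|s-t|r$, and the second is at most $|f(s)-f(t)|\diam Z\le r|s-t|\cdot 1$ because $f$ is $r$-Lipschitz and $\diam Z\le 1$. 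Hence $\dis S\le |s-t|r$, and Lemma~\ref{box_opt} gives $\square(Y_s,Y_t)\le|s-t|r$.

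Finally, for $0\le s\le t\le 1$, the triangle inequality combined with the endpoint identifications forces
\[
r=\square(X_0,X_1)=\square(Y_0,Y_1)\le \square(Y_0,Y_s)+\square(Y_s,Y_t)+\square(Y_t,Y_1)\le sr+(t-s)r+(1-t)r=r,
\]
so every inequality is an equality; in particular $\square(Y_s,Y_t)=(t-s)r=|s-t|\,\square(X_0,X_1)$, proving that $\{Y_t\}_{t\in[0,1]}$ is a $\square$-geodesic from $X_0$ to $X_1$. The only non-routine point is the $l_\infty$-distortion estimate, where the normalizations $\diam Z\le 1$ and the $r$-Lipschitz assumption on $f$ are used precisely to absorb the $Z$-contribution into the budget $|s-t|r$.
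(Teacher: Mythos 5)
Your proposal is correct and follows essentially the same route as the paper's proof: the same diagonal coupling $\pi = T_*(\pi_{st}\otimes\mu_Z)$, the same set $S = T(S_{st}\times Z)$, and the same distortion estimate via $|\max\{a,b\}-\max\{c,d\}|\le\max\{|a-c|,|b-d|\}$ together with the $r$-Lipschitz bound on $f$ and $\diam Z\le 1$. The only difference is that you spell out the endpoint identification and the final triangle-inequality bookkeeping, which the paper leaves implicit.
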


\begin{proof}
Take any $s, t \in [0, 1]$ and fix them.
It is sufficient to prove that
\[
\square(Y_s, Y_t) \le |s-t|r
\]
by the triangle inequality.
Since $\square(X_s, X_t) = |s-t|r$, there exist a coupling $\pi \in \Pi(\mu_{X_s},\mu_{X_t})$ and a closed set $S \subset X_s\times X_t$ such that
\[
\max\{\dis{S}, 1 - \pi(S)\} = |s-t|r
\]
by Lemma \ref{box_opt}.
Here we define a coupling $\pi'\in \Pi(\mu_{X_s}\otimes \mu_Z,\mu_{X_t} \otimes \mu_Z)$ and a closed subset $S' \subset Y_s\times Y_t$ by
\[
\pi' := (\pr_1, \pr_3, \pr_2, \pr_3)_* (\pi \otimes \mu_Z), \quad
S' := \left\{(x, z, y, z) \midd (x, y) \in S, z \in Z\right\}.
\]
Then we see that
\[
\pi'(S') = (\pi \otimes \mu_Z)(S \times Z) = \pi(S) \ge 1 - |s - t| r.
\]
Moreover, for any $(x,z,y,z),(x',z',y',z') \in S'$, we have
\begin{align*}
&\left|\max\{d_{X_s}(x,x'), f(s)d_Z(z,z')\} - \max\{d_{X_t}(y,y'), f(t)d_Z(z,z')\}\right| \\
&\le \max\{|d_{X_s}(x,x')-d_{X_t}(y,y')|, |f(s)-f(t)|d_Z(z,z')\}\\
&\le \max\{\dis{S}, |f(s)-f(t)|\} \\
&\le |s-t|r,
\end{align*}
which implies that $\dis{S'} \le |s-t|r$. Therefore we obtain
\[
\square(Y_s, Y_t) \le \max\{\dis{S'}, 1-\pi'(S')\} \le |s-t|r
\]
by Lemma \ref{box_opt} and then $\{Y_t\}_{t\in[0,1]}$ is a $\square$-geodesic. The proof is completed.
\end{proof}

\begin{proof}[Proof of Theorem \ref{thm:multigeod}]
Let $X_0$ and $X_1$ be mm-spaces and assume $r := \square(X_0, X_1) > 0$.
By Theorem \ref{box_geod}, there exists a $\square$-geodesic $\{X_t\}_{t \in [0,1]}$ from $X_0$ to $X_1$.
We take a function $f$ and an mm-space $Z$ satisfying the assumption of Lemma \ref{lem:geod_prod}, e.g.,
\[
f(t) := r \min\{t, 1-t\} \quad \text{ and } \quad Z := ([0,1], |\cdot|, \mathcal{L}^1).
\]
For any $s,t \in [0,1]$, an mm-space $Y_{s,t}$ is defined by
\[
Y_{s, t} := X_t \times_\infty sf(t) Z.
\]
By Lemma \ref{lem:geod_prod}, $\{Y_{s,t}\}_{t \in [0,1]}$ is a geodesic from $X_0$ to $X_1$ for every $s$.
We prove that $\{Y_{s,t}\}_{t \in [0,1]}$ and $\{Y_{s',t}\}_{t \in [0,1]}$ are disjoint if $s \neq s'$.
It is sufficient to prove that $Y_{s, t}$ and $Y_{s',t}$ are not mm-isomorphic for any $s,s',t \in [0,1]$ with $s<s'$.
The map $\phi := \id_{X_t \times Z}$ is a 1-Lipschitz measure-preserving map from $Y_{s',t}$ to $Y_{s,t}$.
If $Y_{s',t}$ and $Y_{s,t}$ are mm-isomorphic, then the map $\phi$ must be an isometry from $Y_{s',t}$ to $Y_{s,t}$ in the same way as \cite{MMG}*{Proof of Lemma 2.12}, which is a contradiction.
Thus $Y_{s',t}$ and $Y_{s,t}$ are not mm-isomorphic to each other.
Therefore we obtain a family $\{t\mapsto Y_{s,t}\}_{s \in [0,1]}$ of uncountably many pairwise-disjoint geodesics . The proof is completed.
\end{proof}

\begin{cor}\label{cor:geod_noncpt}
For any two mm-spaces $X_0$ and $X_1$ which are not mm-isomorphic and for any $t \in [0,1]$, the set
\[
[X_0, X_1]_t := \left\{Z \in \X \midd \square(X_0, Z) = t\square(X_0,X_1)\text{ and } \square(X_1, Z) = (1-t)\square(X_0,X_1)\right\}
\]
is not compact with respect to the box topology.
\end{cor}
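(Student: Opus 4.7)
The plan is to exhibit an infinite $\square$-discrete sequence inside $[X_0, X_1]_t$ for $t \in (0, 1)$, which forces the set to be non-precompact and hence non-compact. (At $t \in \{0, 1\}$ the set is the singleton $\{X_0\}$ or $\{X_1\}$, so the statement is meaningful only for $t \in (0,1)$.) The sequence will be built by inserting increasingly non-compact discrete factors into a fixed geodesic via Lemma \ref{lem:geod_prod}, and kept apart by the counting trick from Lemma \ref{lem:tot_bdd}.

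First, fix a $\square$-geodesic $\{X_s\}_{s \in [0,1]}$ from $X_0$ to $X_1$ (Theorem \ref{box_geod}), set $r := \square(X_0, X_1) > 0$, pick a constant $c \in (0, r\min\{t, 1-t\}]$, and let $f(s) := c \min\{s/t, (1-s)/(1-t)\}$; this $f$ is $r$-Lipschitz with $f(0) = f(1) = 0$ and $f(t) = c$. For each integer $n \ge 2$ let $Z_n = \{1, \ldots, n\}$ carry the uniform probability measure and the metric $d_{Z_n}(i, j) = 1$ for $i \ne j$, so $\diam Z_n = 1$. Then Lemma \ref{lem:geod_prod} applied with $Z = Z_n$ says that $s \mapsto X_s \times_\infty f(s) Z_n$ is a $\square$-geodesic from $X_0$ to $X_1$, and evaluating at $s = t$ gives
\[
W_n := X_t \times_\infty c Z_n \in [X_0, X_1]_t.
\]

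The heart of the argument is to show $\{W_n\}_{n \ge 2}$ has no $\square$-convergent subsequence. Fix $\ep > 0$ (to be shrunk momentarily) and use Proposition \ref{mmg4.20} to pick a finite $\dot X_t$ with $\square(X_t, \dot X_t) < \ep$; set $N := \#\dot X_t$, $\alpha := \min_{x \ne x'} d_{\dot X_t}(x, x') > 0$, $M := \max_x \mu_{\dot X_t}(\{x\})$, $\delta_0 := \min\{c, \alpha, 1/2\}$, and $\dot W_n := \dot X_t \times_\infty c Z_n$. The product estimate $\square(A \times_p B, A' \times_p B) \le \square(A, A')$ from \cite{prod}*{Proposition 4.1} gives $\square(W_n, \dot W_n) < \ep$. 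For $m > n$, take an optimal pair $(\pi, S)$ as in Lemma \ref{box_opt}. Either $\dis S \ge \delta_0$, or else every two distinct $u, u' \in \dot W_m$ satisfy $d_{\dot W_m}(u, u') \ge \min\{c, \alpha\} \ge \delta_0 > \dis S$, so each slice $\dot W_m \times \{v\}$ contains at most one point of $S$; in the latter case $|S| \le \#\dot W_n = Nn$ and $\pi(S) \le |S| \cdot \max_u \mu_{\dot W_m}(\{u\}) = NnM/m \le 1/2$ as soon as $m \ge 2NnM$, yielding $1 - \pi(S) \ge 1/2 \ge \delta_0$. Either way, $\square(\dot W_m, \dot W_n) \ge \delta_0$. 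Fixing $\ep \in (0, \delta_0/4)$ at the outset and extracting a subsequence $n_k$ with $n_{k+1} \ge 2N n_k M$, the triangle inequality gives $\square(W_{n_j}, W_{n_k}) \ge \delta_0 - 2\ep \ge \delta_0/2 > 0$ for $j \ne k$, so $\{W_{n_k}\}_k$ is an infinite $(\delta_0/2)$-discrete subset of $[X_0, X_1]_t$, and the set is not precompact. The main obstacle compared with Lemma \ref{lem:tot_bdd} is that $X_t$ is in general infinite, which is exactly why the finite approximation $\dot X_t$ and the $2\ep$ slack from the triangle inequality enter.
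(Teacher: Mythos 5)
Your proposal is correct in strategy and genuinely different from the paper's proof in how non-precompactness is certified. Both arguments plant a one-parameter family inside $[X_0,X_1]_t$ via Lemma \ref{lem:geod_prod} applied to $X_s \times_\infty f(s)Z$ with $f$ vanishing at the endpoints, but the paper takes $Z$ to be the cube $([0,1]^n,\|\cdot\|_\infty,\mathcal{L}^n)$ of growing dimension and then simply cites \cite{MMG}*{Proposition 7.37} for the fact that $\{X_t \times_\infty f(t)[0,1]^n\}_n$ has no $\square$-convergent subsequence, whereas you take $Z$ to be a finite uniform discrete space and re-run the counting argument of Lemma \ref{lem:tot_bdd} through a finite approximation of $X_t$. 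Your route is longer but self-contained and reuses only machinery already developed in the paper; the paper's is a three-line reduction to an external result. Your side remark that $[X_0,X_1]_t$ is a compact singleton at $t\in\{0,1\}$ is correct and in fact flags a small imprecision in the statement (the paper's own proof also tacitly needs $f(t)>0$, hence $t\in(0,1)$).

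One step needs repair: you define $\delta_0 := \min\{c,\alpha,1/2\}$ \emph{after} choosing $\dot X_t$ with $\square(X_t,\dot X_t)<\ep$, and then ask to fix $\ep<\delta_0/4$ ``at the outset''; since $\alpha$ depends on $\dot X_t$, which depends on $\ep$, this is circular, and it is not clear that every $X_t$ admits a finite approximation whose minimal positive distance dominates four times its approximation quality. The fix is to drop $\alpha$ from $\delta_0$ altogether: if $\dis{S}<c$, then any two $(u,v),(u',v)\in S$ with $u=(x,i)$, $u'=(x',i')$ satisfy $\max\{d_{\dot X_t}(x,x'),c\,d_{Z_m}(i,i')\}<c$, which forces $i=i'$, so each slice over $v$ meets $S$ in at most $N$ points (not one); hence $\#S\le N^2n$ and $\pi(S)\le N^2nM/m\le 1/2$ for $m\ge 2N^2nM$. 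This yields $\square(\dot W_m,\dot W_n)\ge\min\{c,1/2\}$, a threshold independent of the approximation, so you may legitimately fix $\ep<\min\{c,1/2\}/4$ first and then choose $\dot X_t$. With that adjustment the argument is complete.
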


\begin{proof}
Take a $\square$-geodesic $\{X_t\}_{t\in[0,1]}$ from $X_0$ to $X_1$ and a function $f$ satisfying the assumption of Lemma \ref{lem:geod_prod}.
Then the mm-space
\[
Z_n := X_t \times_\infty f(t)([0,1]^n, \|\cdot\|_\infty, \mathcal{L}^n), \quad n=1,2,\ldots
\]
is in the set $[X_0, X_1]_t$ for any $n$.
However, this sequence $\{Z_n\}_{n=1}^\infty$ have no $\square$-convergent subsequence (see \cite{MMG}*{Proposition 7.37}).
This completes the proof.
\end{proof}

\begin{rem}
\begin{enumerate}
\item Theorem \ref{thm:multigeod} shows that the Alexandrov curvature of $\X$
is not bounded from below nor from above with respect to the box metric $\square$.
\item In the Gromov-Hausdorff space case, one can see the analogous statements of Lemma \ref{lem:geod_prod} in \cite{I}*{Proposition 5.3} and of Corollary \ref{cor:geod_noncpt} in \cite{Bori}.
The construction of a family of geodesics on the Gromov-Hausdorff space has studied in \cites{I, MW}.
\end{enumerate}
\end{rem}

\section{$\X$ and $\Pi$ are locally path connected}\label{sec:loc_path_conn}

The goal of this section is to prove Theorem \ref{loc_path_conn}. For the concentration and weak topologies, it is difficult to obtain a geodesic at present, but it is possible to construct a continuous path in a small ball. We prepare some lemmas to prove Theorem \ref{loc_path_conn}.

\begin{prop}\label{monotone_path}
Let $X_0$ and $X_1$ be two mm-spaces with $X_0 \prec X_1$ and let $f\colon X_1 \to X_0$ be a $1$-Lipschitz measure-preserving map. For any $0 < t < 1$, we define a metric $d_{X_t}$ on $X_1$ by
\[
d_{X_t}(x, x') := (1-t)d_{X_0}(f(x), f(x')) + td_{X_1}(x, x'), \quad x,x' \in X_1,
\]
and define an mm-space
\[
X_t := (X_1, d_{X_t}, \mu_{X_1}).
\]
Then the map $[0, 1] \ni t \mapsto X_t$ is a $\square$-continuous path from $X_0$ to $X_1$ and is monotone with respect to the Lipschitz order, that is, $X_s \prec X_t$ for every $0 \leq s \leq t \leq 1$.
\end{prop}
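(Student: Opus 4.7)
The plan is to verify three things in turn: (a) each $X_t$ is a genuine mm-space, (b) the Lipschitz order $X_s \prec X_t$ holds, and (c) the assignment $t \mapsto X_t$ is continuous with the correct endpoints. Well-definedness is essentially automatic: $d_{X_t}$ is a convex combination of the metric $d_{X_1}$ and the pseudometric $d_{X_0} \circ (f \times f)$, so symmetry and the triangle inequality are immediate, and for $t > 0$ positivity follows from $d_{X_1}(x,x') = 0 \Rightarrow x = x'$. Moreover the sandwich $t \, d_{X_1} \le d_{X_t} \le d_{X_1}$ shows $(X_1, d_{X_t})$ is complete, separable, and topologically identical to $X_1$, so $\mu_{X_1}$ remains a Borel probability measure on it.

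For monotonicity, take $0 \le s \le t \le 1$. When $s > 0$, I would use the identity $\id_{X_1} \colon X_t \to X_s$, which is measure-preserving; the difference
\[
d_{X_t}(x,x') - d_{X_s}(x,x') = (t-s)\bigl(d_{X_1}(x,x') - d_{X_0}(f(x),f(x'))\bigr) \ge 0
\]
uses precisely that $f$ is $1$-Lipschitz, and gives the required $1$-Lipschitz property. For $s = 0$, I would instead take the map $f \colon X_t \to X_0$, which is measure-preserving by hypothesis, and observe that $d_{X_0}(f(x),f(x')) \le d_{X_t}(x,x')$ because the right-hand side is a convex combination of $d_{X_0}(f(x),f(x'))$ and the larger quantity $d_{X_1}(x,x')$.

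The continuity is the main point. The naive idea is to use the diagonal coupling $\pi = (\id,\id)_*\mu_{X_1}$ on $X_s \times X_t$; its distortion on any subset $S$ is
\[
\dis(S) = |s-t| \sup_{(x,x),(x',x') \in S} \bigl|d_{X_1}(x,x') - d_{X_0}(f(x),f(x'))\bigr|.
\]
This is unbounded in general, which is the main obstacle, so I would truncate: given $\ep > 0$, use tightness of $\mu_{X_1}$ to find $M > 0$ and a bounded set $A \subset X_1$ with $\diam A \le 2M$ and $\mu_{X_1}(A) \ge 1 - \ep$. Restricting to $S = \{(x,x) : x \in A\}$ gives $\dis(S) \le 2M |s-t|$ and $1 - \pi(S) \le \ep$, so Lemma \ref{box_opt} yields $\square(X_s,X_t) \le \max\{2M|s-t|, \ep\}$, proving continuity on $(0,1)$.

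Finally, to handle the endpoints I would run essentially the same argument but with the maps that realize $X_0 \prec X_t \prec X_1$. For $t \to 0$, using the coupling $\pi = (\id, f)_* \mu_{X_1}$ on $X_t \times X_0$ and $S = \{(x,f(x)) : x \in A\}$, the distortion becomes $t \bigl|d_{X_1}(x,x') - d_{X_0}(f(x),f(x'))\bigr| \le 2tM$, giving $\square(X_t, X_0) \le \max\{2tM, \ep\} \to \ep$. For $t \to 1$, the diagonal coupling on $X_t \times X_1$ with the same $A$ gives distortion $(1-t)\bigl|d_{X_1}(x,x') - d_{X_0}(f(x),f(x'))\bigr| \le 2(1-t)M$, so $\square(X_t, X_1) \le \max\{2(1-t)M, \ep\}$. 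Since $\ep$ was arbitrary, continuity extends to the closed interval $[0,1]$ with the correct endpoints, completing the proof.
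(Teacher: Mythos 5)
Your proof is correct and follows essentially the same route as the paper: monotonicity via the identity map (for $s>0$) and via $f$ (for $s=0$), and continuity via Lemma \ref{box_opt} applied to the diagonal coupling restricted to a set of measure $\geq 1-\ep$ with finite diameter (the paper takes a compact set from inner regularity where you take a bounded set from tightness, and it likewise switches to the coupling $(f,\id)_*\mu_{X_1}$ at the endpoint $s=0$). The only addition is your explicit well-definedness check, which the paper leaves implicit.
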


\begin{proof}
For any $s, t \in (0,1]$ with $s \leq t$, since $d_{X_s} \leq d_{X_t}$, we have $X_s \prec X_t$.
Moreover, for any $t \in (0,1]$, the map $f$ is also 1-Lipschitz with respect to $d_{X_0}$ and $d_{X_t}$, which implies $X_0 \prec X_t$.
Thus we obtain the monotonicity of $t \mapsto X_t$.

We next prove the (uniform) continuity with respect to $\square$. We take a real number $\ep > 0$. By the inner regularity of $\mu_{X_1}$, there exists a compact subset $K \subset X_1$ such that
\[
\mu_{X_1}(K) \geq 1-\ep.
\]
If two real numbers $s, t \in [0,1]$ satisfy $|s - t| \leq (\diam{K})^{-1}\ep$, then $\square(X_s, X_t) \leq \ep$ holds.
Indeed, in the case of $s, t \in (0,1]$, letting $\pi := (\id_{X_1}, \id_{X_1})_* \mu_{X_1}$ and $S := \left\{(x, x) \midd x \in K \right\}$, we have $\pi(S) = \mu_{X_1}(K) \geq 1-\ep$ and
\begin{align*}
\dis{S} & = \sup_{x, x' \in K} |d_{X_s}(x,x') - d_{X_t}(x,x')| \\
& = |s - t| \sup_{x, x' \in K} |d_{X_1}(x,x') - d_{X_0}(f(x),f(x'))| \\
& \leq |s - t| \diam{K} \leq \ep.
\end{align*}
Thus we obtain $\square(X_s, X_t) \leq \ep$ by Lemma \ref{box_opt}.
Similarly, if $s = 0$, then we just put $\pi := (f, \id_{X_1})_* \mu_{X_1}$ and $S := \left\{(f(x), x) \midd x \in K \right\}$.
The proof is completed.
\end{proof}

\begin{prop}\label{pyramid_path}
Let $\cP$ be a pyramid and let $\ep > 0$. There exist an mm-space $X \in \cP$ and a $\rho$-continuous path $\gamma \colon [0,1] \to \Pi$ joining $\cP_X$ and $\cP$ such that
\[
\rho(\gamma(t), \cP) < \ep \quad \text{ and } \quad \gamma(t) \subset \cP
\]
for all $t \in [0,1]$.
\end{prop}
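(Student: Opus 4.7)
The plan is to concatenate countably many Lipschitz-monotone $\square$-continuous paths furnished by Proposition \ref{monotone_path} along an approximating chain of $\cP$. By Lemma \ref{approximation} I fix an approximation $\{Y_m\}_{m=1}^\infty$ of $\cP$, so $Y_m \prec Y_{m+1}$ and $Y_m$ converges weakly to $\cP$; I choose $m_0$ with $\rho(\cP_{Y_{m_0}}, \cP) < \ep$ and set $X := Y_{m_0}$. For each $k \geq m_0$, I apply Proposition \ref{monotone_path} to the relation $Y_k \prec Y_{k+1}$ (with a chosen $1$-Lipschitz measure-preserving map $Y_{k+1} \to Y_k$) to obtain a $\square$-continuous path $[0,1] \ni s \mapsto Y_{k,s}$ with $Y_{k,0} = Y_k$, $Y_{k,1} = Y_{k+1}$, and $Y_{k,s} \prec Y_{k,t}$ whenever $s \leq t$. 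Setting $a_k := 1 - 2^{-(k-m_0)}$, I define
\[
\gamma(t) := \cP_{Y_{k, (t-a_k)/(a_{k+1}-a_k)}} \quad \text{for } t \in [a_k, a_{k+1}), \qquad \gamma(1) := \cP.
\]
The values at each gluing point $t = a_k$ are consistent (both pieces produce $\cP_{Y_k}$), $\gamma(0) = \cP_X$, and $\gamma(1) = \cP$ by construction.

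The central observation is the nested sandwich
\[
\cP_{Y_{m_0}} \subset \cP_{Y_k} \subset \gamma(t) \subset \cP_{Y_{k+1}} \subset \cP
\]
which holds for every $t \in [a_k, a_{k+1})$ by the Lipschitz monotonicity of $s \mapsto Y_{k,s}$. This immediately yields $\gamma(t) \subset \cP$. For the $\rho$-estimate I invoke the explicit formula for $\rho$ from the remark after Theorem \ref{Py:thm}, together with the elementary fact that $A \subset B \subset C$ implies $\haus(B \cap \X(j,j), C \cap \X(j,j)) \leq \haus(A \cap \X(j,j), C \cap \X(j,j))$ for every $j$ (since $A \subset C$ reduces $\haus(A, C)$ to $\sup_{c \in C} \square(c, A)$, and enlarging $A$ to $B$ only decreases this supremum). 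Weight-summing in $j$ gives $\rho(\gamma(t), \cP) \leq \rho(\cP_{Y_{m_0}}, \cP) < \ep$.

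It remains to verify continuity. On each closed segment $[a_k, a_{k+1}]$ the map $\gamma$ is $\rho$-continuous because $s \mapsto Y_{k,s}$ is $\square$-continuous and the composition $\iota \colon (\X, \square) \to (\X, \conc) \to (\Pi, \rho)$ is continuous (the second arrow being $1$-Lipschitz by Theorem \ref{Py:thm}, the first since $\conc \leq \square$); the pieces match at $t = a_k$ by construction. At $t = 1$, the same sandwich gives $\rho(\gamma(t), \cP) \leq \rho(\cP_{Y_k}, \cP)$ for $t \in [a_k, a_{k+1})$, and this tends to $0$ as $k \to \infty$ because $Y_k$ converges weakly to $\cP$. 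The only step that requires more than bookkeeping is the sandwich monotonicity of $\rho$; this is what forces recourse to the explicit Hausdorff-sum representation of $\rho$ on the compact slices $\X(j,j)$, and once that is in hand the proof closes.
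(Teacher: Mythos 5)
Your proposal is correct and follows essentially the same route as the paper: concatenate the monotone $\square$-continuous paths of Proposition \ref{monotone_path} along an approximation $\{Y_m\}_{m=1}^\infty$ of $\cP$ on a sequence of intervals accumulating at $t=1$, and use the weak convergence $Y_m\to\cP$ for continuity at the endpoint. The only difference is cosmetic: the paper gets the $\ep$-bound by cutting the tail of the path starting at $\cP_{Y_1}$, while you start at a well-chosen $Y_{m_0}$ and deduce the bound from the (correctly verified) monotonicity $\rho(\mathcal{B},\mathcal{C})\le\rho(\mathcal{A},\mathcal{C})$ for nested pyramids $\mathcal{A}\subset\mathcal{B}\subset\mathcal{C}$, read off from the explicit Hausdorff-sum formula for $\rho$; both variants are valid.
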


\begin{proof}
Let $\{Y_m\}_{m=1}^\infty$ be an approximation of $\cP$. By Proposition \ref{monotone_path}, for each $m$, there exists a $\square$-continuous path $\gamma_m \colon [0,1] \to \X$ from $Y_m$ to $Y_{m+1}$ with $Y_m \prec \gamma_m(t) \prec Y_{m+1}$ for all $t$.
We define a map $\gamma \colon [0, 1) \to \Pi$ by
\[
\gamma(t) := \cP_{\gamma_m(2 - 2^m(1-t))} \quad \text{ if } 1-2^{-m+1} \leq t \leq 1 -2^{-m}.
\]
Since $Y_m$ converges weakly to $\cP$ as $m\to\infty$, $\gamma(t)$ converges weakly to $\cP$ as $t \to 1$. Thus $\gamma$ is a $\rho$-continuous path from $\cP_{Y_1}$ and $\cP$ with $\gamma(t) \subset \cP$ for all $t$. Cutting $\gamma$ if it is required, we obtain the desired one.
\end{proof}

\begin{proof}[Proof of Theorem \ref{loc_path_conn}]
We first prove that $\X$ with the concentration topology is locally path connected.
If not, there exist an mm-space $X$, a real number $\ep > 0$, and a sequence $\{Y_n\}_{n=1}^\infty$ of mm-spaces concentrating to $X$ such that for any $\conc$-continuous path $\gamma \colon [0,1] \to \X$ from $Y_n$ to $X$, there exists $t \in [0,1]$ such that
\[
\conc(\gamma(t), X) > \ep.
\]
By Proposition \ref{mmg6.2}, there exists a sequence $\{Z_n\}_{n = 1}^\infty$ of mm-spaces $\square$-converging to $X$ with $Z_n \prec Y_n$ for every $n$. By Proposition \ref{monotone_path} and Theorem \ref{box_geod}, for each $n$, there exist $\square$-continuous paths $\gamma_n^1$ and $\gamma_n^2$ such that
\begin{itemize}
\item $\gamma_n^1 \colon [0,1] \to \X$ joins $Y_n$ to $Z_n$ and satisfies $Z_n \prec \gamma_n^1(t) \prec Y_n$ for any $0\leq t \leq 1$,
\item $\gamma_n^2 \colon [0,1] \to \X$ is a $\square$-geodesic from $Z_n$ to $X$.
\end{itemize}
Joining the two paths $\gamma_n^1$ and $\gamma_n^2$, we obtain a continuous path from $Y_n$ to $X$. By the assumption and by
\[
\limsup_{n\to \infty} \sup_{t \in [0,1]} \conc(\gamma_n^2(t), X) \leq \lim_{n\to \infty} \square(Z_n, X) = 0,
\]
there exists $\{t_n\}_{n=1}^\infty \subset [0,1]$ such that
\[
\conc(\gamma_n^1(t_n), X) > \ep
\]
for any sufficiently large $n$. On the other hand, $\{\gamma_n^1(t_n)\}_{n=1}^\infty$ must concentrate to $X$ as $n \to \infty$. Indeed, since $Z_n \prec \gamma_n^1(t_n) \prec Y_n$ for all $n$, a limit of a weak convergent subsequence of $\{\gamma_n^1(t_n)\}_{n =1}^\infty$ must be $X$. This is a contradiction. Therefore $\X$ is locally path connected in the concentration topology.

We next prove that $\Pi$ is locally path connected. The outline of the proof is same as that in the first half. Suppose that $\Pi$ is not locally path connected. There exist a pyramid $\cP$, a real number $\ep > 0$, and a sequence $\{\cP_n\}_{n=1}^\infty$ of pyramids converging weakly to $\cP$ such that for any $\rho$-continuous path $\gamma \colon [0,1] \to \Pi$ from $\cP_n$ to $\cP$, there exists $t \in [0,1]$ such that
\[
\rho(\gamma(t), \cP) > \ep.
\]
For every $m$ and $n$, by Proposition \ref{pyramid_path}, there exist mm-spaces $X_m \in \cP$, $Y_n \in \cP_n$ and $\rho$-continuous paths $\gamma_n^1$ and $\gamma_m^4$ such that
\begin{itemize}
\item $\gamma_n^1 \colon [0,1] \to \Pi$ joins $\cP_n$ to $\cP_{Y_n}$ and satisfies $\rho(\gamma_n^1(t), \cP_n) \leq n^{-1}$ for any $0\leq t \leq 1$.
\item $\gamma_m^4 \colon [0,1] \to \Pi$ joins $\cP_{X_m}$ to $\cP$ and satisfies
$\rho(\gamma_m^4(t), \cP) \leq m^{-1}$ for any $0 \leq t \leq 1$.
\end{itemize}
In particular, $\{\cP_{Y_n}\}_{n=1}^\infty$ converges weakly to $\cP$.
Thus, by the definition of the weak convergence, for any $m$, there exists a sequence $\{Z_{mn}\}_{n=1}^\infty$ of mm-spaces such that
\[
\lim_{n\to\infty} \square(Z_{mn}, X_m) = 0 \quad \text{ and } \quad  Z_{mn} \prec Y_n \text{ for every } n.
\]
For every $m$, we choose $n=n(m)$ as
\[
\square(Z_{mn(m)}, X_m) \le m^{-1} \quad \text{ and } \quad  \lim_{m\to\infty} n(m) = \infty,
\]
and put
\[
Y_m := Y_{n(m)}, \quad Z_m := Z_{mn(m)}, \quad \text{ and } \quad \gamma_m^1 := \gamma_{n(m)}^1.
\]
By Proposition \ref{monotone_path} and Theorem \ref{box_geod}, for each $m$, there exist $\square$-continuous paths $\gamma_m^2$ and $\gamma_m^3$ such that
\begin{itemize}
\item $\gamma_m^2 \colon [0,1] \to \X$ joins $Y_m$ to $Z_m$ and satisfies $Z_m \prec \gamma_m^2(t) \prec Y_m$ for any $0\leq t \leq 1$.
\item $\gamma_m^3 \colon [0,1] \to \X$  is a $\square$-geodesic from $Z_m$ to $X_m$.
\end{itemize}
Joining the four paths $\gamma_m^1$, $\gamma_m^2$, $\gamma_m^3$, and $\gamma_m^4$, we obtain a $\rho$-continuous path from $\cP_m = \cP_{n(m)}$ to $\cP$. By the assumption and by
\[
\lim_{m\to \infty} \sup_{t \in [0,1]} \rho(\gamma_m^1(t), \cP) = \lim_{m\to \infty} \sup_{t \in [0,1]} \rho(\gamma_m^3(t), \cP) = \lim_{m\to \infty} \sup_{t \in [0,1]} \rho(\gamma_m^4(t), \cP) = 0,
\]
there exists $\{t_m\}_{m=1}^\infty \subset [0,1]$ such that
\[
\rho(\gamma_m^2(t_m), \cP) > \ep
\]
for any sufficiently large $m$. On the other hand, $\{\gamma_m^2(t_m)\}_{m=1}^\infty$ must converge weakly to $\cP$ as $m \to \infty$. This is a contradiction. Therefore $\Pi$ is locally path connected. The proof of the theorem is completed.
\end{proof}

Theorem \ref{contra} and Theorem \ref{loc_path_conn} together imply Corollary \ref{Peano} directly. We recall the {\it Peano space} through the following theorem.

\begin{thm}[Hahn-Mazurkiewicz theorem]
A Hausdorff space is compact, connected, metrizable, and locally connected if and only if it is a continuous image of the unit closed interval $[0, 1]$.
{\rm (}Such a space is called a Peano space.{\rm )}
\end{thm}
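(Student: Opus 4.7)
The plan is to treat this as a classical theorem of general topology and sketch the standard proof, since the authors appear to be recalling Hahn-Mazurkiewicz in order to apply it in Corollary \ref{Peano} rather than to prove it anew.

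The forward implication --- a continuous image $Y$ of $[0,1]$ is a Peano space --- is routine. Compactness and connectedness pass to continuous images; a continuous surjection from the compact $[0,1]$ into a Hausdorff space is automatically a closed quotient map, so local connectedness of $Y$ follows from that of $[0,1]$ via the standard fact that closed quotients of locally connected spaces are locally connected. Metrizability of the compact Hausdorff $Y$ follows from second countability (inherited through the quotient) and Urysohn's theorem.

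The converse is the substantive direction. Given a Peano space $Y$ with a fixed compatible metric, I would first upgrade local connectedness to \emph{uniform} local connectedness using compactness: for every $\varepsilon > 0$ there exists $\delta > 0$ such that any two points of $Y$ within distance $\delta$ lie in a common connected open subset of diameter less than $\varepsilon$. Using this, I would inductively construct a nested sequence of finite chains $E^n_1, E^n_2, \ldots, E^n_{k_n}$ of closed connected subsets covering $Y$, with $\diam(E^n_j) \leq 2^{-n}$ and $E^n_j \cap E^n_{j+1} \neq \emptyset$, arranged so that the chain at level $n+1$ refines the chain at level $n$ compatibly. Subdividing $[0,1]$ into $k_n$ successive subintervals and mapping each onto the corresponding chain element (interpolating continuously across boundary points chosen in the pairwise intersections) produces continuous maps $f_n \colon [0,1] \to Y$. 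Uniform local connectedness guarantees $\sup_t d_Y(f_n(t), f_{n+1}(t)) \leq C \cdot 2^{-n}$, so the $f_n$ form a uniform Cauchy sequence whose limit $f$ is a continuous surjection.

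The main obstacle is the combinatorial bookkeeping needed to refine the chains compatibly across levels so that the interpolating maps truly form a uniform Cauchy sequence; uniform local connectedness and a careful re-ordering of each cover at every step are what make this possible. Once Hahn-Mazurkiewicz is available, Corollary \ref{Peano} follows immediately: by Theorem \ref{Py:thm} the space $(\Pi, \rho)$ is compact and metrizable, by Theorem \ref{contra} it is connected (being contractible), and by Theorem \ref{loc_path_conn} it is locally path connected, hence locally connected.
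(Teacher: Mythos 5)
The paper offers no proof of this statement: it is quoted verbatim as the classical Hahn--Mazurkiewicz theorem, solely so that Corollary \ref{Peano} can be read off from Theorems \ref{Py:thm}, \ref{contra}, and \ref{loc_path_conn} (compactness and metrizability of $(\Pi,\rho)$, connectedness via contractibility, and local connectedness via local path connectedness). Your decision to treat it as a recalled classical result and your closing derivation of Corollary \ref{Peano} therefore match the paper's intent exactly, and your sketch of the standard textbook proof is essentially sound in both directions: continuous Hausdorff images of $[0,1]$ are compact, connected, locally connected (closed quotient) and metrizable (e.g.\ because $C(Y)$ embeds isometrically into the separable space $C([0,1])$, which is a cleaner route than chasing second countability through the quotient), and the converse goes through uniform local connectedness and nested refining chains.

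One step in your converse is quicker than it should be: ``interpolating continuously across boundary points chosen in the pairwise intersections'' presupposes that you can join two points of a small connected closed piece $E^n_j$ by a \emph{path} inside (a slight enlargement of) that piece. Connectedness plus local connectedness does not by itself produce paths; this is exactly the content of the arcwise connectedness theorem (a complete, connected, locally connected metric space is arcwise connected and uniformly locally arcwise connected), whose proof is itself nontrivial and uses compactness/completeness in an essential way. The standard treatments either establish that theorem first and then run your chain argument with genuine arcs, or bypass it by starting from a continuous surjection of the Cantor set onto $Y$ and filling the complementary gaps with small arcs. Since you explicitly flag the combinatorial refinement as the main obstacle but not this one, be aware that the passage from connected pieces to continuous partial maps is where the real work hides; with that ingredient supplied, your outline is the standard and correct proof.
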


\section{Revisit the weak topology on $\Pi$}\label{sec:hypersp}
Let $X$ be a Hausdorff space and let $\F(X)$ be the set of all closed subsets of $X$.

We recall that for a given net $\{A_\lm\}_{\lm \in \Lm}$ in $\F(X)$, the {\it upper closed limit} and the {\it lower closed limit} of $\{A_\lm\}_{\lm \in \Lm}$ are defined as
\begin{align*}
\Ls{A_\lm} & := \left\{x \in X \midd U_x \cap A_\lm \neq \emptyset \text{ cofinally for every neighborhood } U_x \text{ of } x \right\}, \\
\Li{A_\lm} & := \left\{x \in X \midd U_x \cap A_\lm \neq \emptyset \text{ residually for every neighborhood } U_x \text{ of } x \right\}.
\end{align*}
Note that $\Ls{A_\lm}$ and $\Li{A_\lm}$ are closed subsets of $X$ and $\Li{A_\lm} \subset \Ls{A_\lm}$. A element of $\Ls{A_\lm}$ is called a {\it cluster point} of $\{A_\lm\}_{\lm \in \Lm}$ and a element of $\Li{A_\lm}$ a {\it limit point}. In the case that $X$ is a metric space with metric $d$, we see that
\[
\Ls{A_\lm} = \left\{x \in X \midd {\textstyle \liminf_{\lm} d(x, A_\lm)} = 0 \right\} \text{ and }
\Li{A_\lm} = \left\{x \in X \midd {\textstyle\limsup_{\lm} d(x, A_\lm)} = 0 \right\},
\]
where $\liminf_{\lm} a_\lm := \sup_{\lm \in \Lm} \inf_{\lm' \geq \lm} a_\lm$ and $\limsup_{\lm} a_\lm := \inf_{\lm \in \Lm} \sup_{\lm' \geq \lm} a_\lm$ for a net $\{a_\lm\}_{\lm\in\Lm}$ of real numbers.

\begin{dfn}\label{KPconv}
A net $\{A_\lm\}_{\lm \in \Lm}$ {\it Kuratowski-Painlev\'e converges} to $A \in \F(X)$ provided that
\[
\Li{A_\lm} = \Ls{A_\lm} = A.
\]
\end{dfn}

It is well-known that the Kuratowski-Painlev\'e convergence is topological (that is, there exists a topology achieving the convergence) if and only if $X$ is locally compact. On the other hand, the finest topology whose convergence is weaker than the Kuratowski-Painlev\'e convergence always exists. This topology, write $\tK$, is called the {\it topologization} of the Kuratowski-Painlev\'e convergence, or {\it convergence topology} historically.

We next recall the {\it Fell topology} $\tF$ on $\F(X)$. The Fell topology is deeply related to the Kuratowski-Painlev\'e convergence. This topology is determined by the following subbase:
\[
\left\{\{A \in \F(X) : A \cap V \neq \emptyset\} \midd V \text{ is open} \right\}
\cup \left\{\{A \in \F(X) : A \subset X \setminus K \} \midd K \text{ is compact} \right\}.
\]
The Kuratowski-Painlev\'e convergence implies $\tF$-convergence, so that $\tF \subset \tK$. If $X$ is locally compact, then the $\tF$-convergence implies the Kuratowski-Painlev\'e convergence conversely. In this case, the Fell topology achieves the Kuratowski-Painlev\'e convergence.

The known results used in this paper are listed as follows.

\begin{thm}[Mrowka's theorem, cf.~\cite{B}*{Theorems 5.2.11 and 5.2.12}]
Let $X$ be a Hausdorff space. Any net $\{A_\lm\}_{\lm\in\Lm}$ in $\F(X)$ has a Kuratowski-Painlev\'e convergent subnet. Moreover, if $X$ is second countable, then any sequence $\{A_n\}_{n=1}^\infty$ in $\F(X)$ has a Kuratowski-Painlev\'e convergent subsequence.
\end{thm}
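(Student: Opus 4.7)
The plan is to treat the two assertions separately, using universal subnets for the general case and a diagonal extraction against a countable base for the second-countable case. Both arguments are purely set-theoretic and topological; no appeal to local compactness is made, which matters since $(\X, \square)$ is not locally compact by Theorem \ref{no_cpt_nbd}, so one could not simply invoke compactness of the Fell topology here.

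For the first assertion, the plan is to pass to a universal subnet $\{A_{\lm_\mu}\}_{\mu \in M}$ of $\{A_\lm\}_{\lm \in \Lm}$, that is, a subnet such that for every $\mathcal{S} \subset \F(X)$ the subnet lies residually in either $\mathcal{S}$ or in $\F(X)\setminus\mathcal{S}$; such a subnet exists by Zorn's lemma applied to an ultrafilter extending the tails of the net. I would then show that any universal net is Kuratowski-Painlev\'e convergent. The inclusion $\Li A_{\lm_\mu} \subset \Ls A_{\lm_\mu}$ is automatic; for the reverse, take $x \in \Ls A_{\lm_\mu}$ and any open neighborhood $U$ of $x$. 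The collection $\mathcal{S}_U := \{A \in \F(X) \mid A \cap U \neq \emptyset\}$ is met cofinally by the subnet, so the universal property rules out its being residually in $\F(X)\setminus\mathcal{S}_U$ (which would contradict cofinal meeting), forcing it to be residually in $\mathcal{S}_U$. Hence $x \in \Li A_{\lm_\mu}$, and the two limits coincide.

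For the sequential second-countable case, I would fix a countable base $\{V_n\}_{n=1}^\infty$ of $X$ and run a standard diagonal extraction on $\{A_n\}$. At the $k$-th stage, starting from the subsequence produced previously, I would extract a further subsequence on which either $V_k \cap A_n \neq \emptyset$ for every sufficiently large index, or $V_k \cap A_n = \emptyset$ for every such index; this dichotomy is available because at least one of these two behaviors occurs along an infinite set of indices, so one simply passes to that infinite set. The diagonal subsequence $\{A_{n_j}\}$ then satisfies, for every $k$, that $V_k$ either eventually meets or eventually misses $A_{n_j}$. Given $x \in \Ls A_{n_j}$ and an arbitrary neighborhood $U$ of $x$, I would pick a basic $V_k$ with $x \in V_k \subset U$; cofinal meeting excludes the ``eventually misses'' alternative, so $V_k$, and thus $U$, meets $A_{n_j}$ eventually, yielding $x \in \Li A_{n_j}$. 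The only place that requires care is the bookkeeping for the dichotomy at each extraction stage; beyond that, the conclusion is direct from the countable-base characterization of $\Li$ and $\Ls$, whereas in the first part this countable reduction is unavailable and one genuinely needs the universal-subnet machinery.
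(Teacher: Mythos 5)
The paper does not actually prove this statement: it is quoted as a known result with a pointer to Beer's book (Theorems 5.2.11 and 5.2.12), so there is no in-paper argument to compare against. Your proposal is correct and is essentially the standard proof of Mrowka's theorem as found in that reference: pass to a universal subnet and observe that universality upgrades ``$\{A_{\lm_\mu}\}$ meets $\mathcal{S}_U$ cofinally'' to ``residually,'' which forces $\Ls A_{\lm_\mu} = \Li A_{\lm_\mu}$; in the second-countable case the ultrafilter machinery is replaced by a diagonal extraction over a countable base, and the countable-base characterization of $\Li$ and $\Ls$ closes the argument. The only points worth recording are that the common limit $\Ls A_{\lm_\mu} = \Li A_{\lm_\mu}$ is automatically a closed (possibly empty) set, hence genuinely an element of $\F(X)$, and that the Hausdorff hypothesis is a standing assumption of the hyperspace framework rather than something your argument uses --- neither observation is a gap.
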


We remark that a subnet of a sequence is not necessarily a subsequence. Mrowka's theorem says the compactness and the sequentially compactness of both $\tF$ and $\tK$.

\begin{thm}[cf.~\cite{B}*{Proposition 5.1.2}, \cite{F}*{4A2T}]
Let $X$ be a Hausdorff space. Then the following {\rm (1)} and {\rm (2)} hold.
\begin{enumerate}
\item Both $\tF$ and $\tK$ are $T_1$ {\rm (}Fr\'echet{\rm )}.
\item $\tF$ is Hausdorff if and only if $X$ is locally compact.
\end{enumerate}
\end{thm}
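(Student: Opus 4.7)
The plan is to deduce (1) and (2) from direct manipulations with the Fell subbase, exploiting that singletons in the Hausdorff space $X$ are compact. For (1), it suffices to establish the $T_1$ property for the Fell topology $\tF$; since $\tF \subseteq \tK$ and the $T_1$ axiom passes to finer topologies (a closed singleton in $\tF$ is still closed in $\tK$), $\tK$ will inherit it. For (2), the forward direction is a routine separation argument from a compact neighborhood, while for the converse I plan to argue by contrapositive using the observation that basic $\tF$-neighborhoods of $\emptyset$ consist only of ``miss-a-compact'' conditions.

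To prove $\tF$ is $T_1$, I would exhibit every singleton $\{A_0\} \subseteq \F(X)$ as an intersection of $\tF$-closed sets. Since $\{x\}$ is compact for every $x \in X$, the subbasic open set $\{A : A \cap \{x\} = \emptyset\}$ has $\tF$-closed complement $\{A : x \in A\}$, so $\{A : A_0 \subseteq A\} = \bigcap_{x \in A_0}\{A : x \in A\}$ is $\tF$-closed. The other inclusion $\{A : A \subseteq A_0\}$ is the complement of the union $\bigcup \{A : A \cap V \neq \emptyset\}$ taken over open $V$ disjoint from $A_0$, so it is $\tF$-closed as well. Their intersection is $\{A_0\}$.

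For the forward direction of (2), assume $X$ is locally compact. Given $A \neq B$ in $\F(X)$, take $x \in A \setminus B$ (without loss of generality). Local compactness in the Hausdorff $X$ yields a compact neighborhood $K$ of $x$ disjoint from the closed set $B$, and then the $\tF$-open sets $\{C : C \cap \operatorname{int} K \neq \emptyset\}$ and $\{C : C \cap K = \emptyset\}$ are disjoint neighborhoods of $A$ and $B$ respectively (since $\operatorname{int} K \subseteq K$). For the converse, I argue by contrapositive: suppose some $x \in X$ has no compact neighborhood, and I will show $\{x\}$ and $\emptyset$ admit no disjoint $\tF$-open neighborhoods. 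A basic $\tF$-neighborhood of $\{x\}$ has the form $\mathcal{U} = \{A : A \cap V_i \neq \emptyset \text{ for } i=1,\ldots,n,\ A \cap K = \emptyset\}$ with $x \in V_i$ open and $K$ compact with $x \notin K$; a basic $\tF$-neighborhood of $\emptyset$ must omit all ``hit'' constraints (since $\emptyset$ meets no open set), so it has the form $\mathcal{V} = \{A : A \cap K' = \emptyset\}$ for some compact $K'$. Setting $W := \bigcap_i V_i$, for each $y \in W$ with $y \notin K$ the closed singleton $\{y\}$ satisfies $\{y\} \in \mathcal{U}$, so if $\mathcal{U} \cap \mathcal{V} = \emptyset$ then $\{y\} \cap K' \neq \emptyset$, i.e.\ $y \in K'$. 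Hence $W \subseteq K \cup K'$, which is compact and therefore closed in the Hausdorff $X$, and so $\overline{W}$ is a compact neighborhood of $x$, contradicting the hypothesis. The main obstacle is recognizing that $(\{x\}, \emptyset)$ is the right witness: the absence of hit constraints around $\emptyset$ forces any separating pair $(\mathcal{U}, \mathcal{V})$ to directly encode a compact neighborhood of $x$, after which the argument is routine.
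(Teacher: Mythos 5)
This theorem is stated in the paper as background, with citations to Beer (Proposition 5.1.2) and Fremlin (4A2T); the paper supplies no proof of its own, so there is nothing internal to compare against. Your argument is correct and is essentially the standard one from those references: singletons of $\F(X)$ are closed because $\{A : A_0 \subseteq A\}$ and $\{A : A \subseteq A_0\}$ are each $\tF$-closed (using that points of a Hausdorff space are compact), $T_1$ passes up to the finer topology $\tK$, and for (2) the pair $(\{x\},\emptyset)$ is indeed the right witness, with the reduction of a basic neighborhood of $\{x\}$ to the form $\{A : A\cap V_i\neq\emptyset,\ A\cap K=\emptyset\}$ justified because a finite union of compact sets is compact. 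The only points worth making explicit are that $\emptyset$ is taken to be an element of $\F(X)$ (as the paper's use of compactness of $(\F(X),\tF)$ requires) and that in the degenerate case with no hit-constraints one gets $X\subseteq K\cup K'$ compact, which still contradicts the failure of local compactness at $x$.
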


\begin{thm}[\cite{BRL}*{Theorems 3.12 and 3.13}]
Let $X$ be a metrizable space. Then the following {\rm (1)--(3)} hold.
\begin{enumerate}
\item $\tK$ is sequential if and only if $X$ is separable.
\item $\tK = \tF$ if and only if $X$ has at most one point that has no compact neighborhood.
\item $\tF$ is sequential if and only if $X$ is separable and $X$ has at most one point that has no compact neighborhood.
\end{enumerate}
\end{thm}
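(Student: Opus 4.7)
The plan is to fix a compatible metric $d$ on $X$ and to use the characterization that a set $\mathcal{C}\subset\F(X)$ is $\tK$-closed iff every Kuratowski-Painlev\'e (KP) convergent net in $\mathcal{C}$ has its limit in $\mathcal{C}$. Since any KP-convergent net $A_\lm\to A$ automatically $\tF$-converges to $A$ (hit conditions from $A\subset\Li A_\lm$, miss-compact conditions from the fact that a compact $K$ disjoint from $A=\Ls A_\lm$ is eventually disjoint from $A_\lm$), one always has $\tF\subset\tK$. I would organize the proof by proving (2) first, then (1), and finally combining them for (3).

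For (2), in the backward direction I would show that $\tF$-convergence implies KP-convergence when $X$ has at most one non-locally-compact point $x_0$. Given $A_\lm\to A$ in $\tF$, the inclusion $A\subset\Li A_\lm$ is immediate from hit conditions. For $\Ls A_\lm\subset A$, any $x\notin A$ with $x\ne x_0$ admits a compact neighborhood disjoint from $A$, so the miss-compact condition forces $x\notin\Ls A_\lm$. The potentially bad point $x_0$, if it lies outside $A$, is handled by covering a small closed neighborhood $\overline{U}$ of $x_0$, with $\overline{U}\cap A=\emptyset$, by countably many compact sets using local compactness away from $x_0$, together with a direct argument that eventually no $A_\lm$ hits $x_0$ itself. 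For the forward direction I would exhibit, in the presence of two non-locally-compact points $p,q$, a net that $\tF$-converges but fails the KP-condition by simultaneously witnessing non-local-compactness at both points.

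For (1), the backward direction (separability $\Rightarrow$ $\tK$ sequential) is the main obstacle. Fix a countable base $\{U_n\}$ of $X$. Given a sequentially $\tK$-closed $\mathcal{C}$ and a KP-convergent net $\{A_\lm\}_{\lm\in\Lm}\subset\mathcal{C}$ with KP-limit $A$, the goal is to produce a sequence $\{A_{\lm_n}\}$ KP-converging to $A$. At step $n$ I would choose $\lm_n$ so that $U_k\cap A_{\lm_n}\ne\emptyset$ for each $k\le n$ with $U_k\cap A\ne\emptyset$ and so that $A_{\lm_n}$ misses a fixed finite set approximating the complement of the $1/n$-thickening of $A$; combining a diagonal argument with Mrowka's theorem then yields a subsequence KP-converging to $A$, so $A\in\mathcal{C}$ by sequential closedness. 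For the forward direction, if $X$ is non-separable I would take an uncountable $\ep$-discrete subset $D\subset X$ and let $\mathcal{C}$ consist of all finite subsets of $D$: any KP-limit of a sequence in $\mathcal{C}$ uses only countably many points of $D$, so $\mathcal{C}$ can be enlarged to a sequentially $\tK$-closed set, while the net of all finite subsets of $D$ ordered by inclusion KP-converges to $D$, which lies outside this enlargement.

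For (3): if $X$ has at most one non-locally-compact point then $\tK=\tF$ by (2), so $\tF$ is sequential iff $\tK$ is, iff $X$ is separable by (1). Conversely, if $X$ has two non-locally-compact points $p,q$, I would construct a set that is sequentially $\tF$-closed but not $\tF$-closed by pairing sequences converging to $p$ with sequences converging to $q$, showing that $\tF$ fails to be sequential regardless of separability. The hardest step throughout is the diagonalization in (1) producing an honest sequence from a net; this is precisely where second countability of $X$ (not merely Mrowka's sequential compactness) is indispensable.
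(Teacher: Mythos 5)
The paper gives no proof of this statement---it is quoted verbatim from Beer--Rodr\'iguez-L\'opez---so your argument can only be judged on its own terms, and it contains a genuine error in the backward direction of (2). You propose to show that when $X$ has at most one point $x_0$ without a compact neighborhood, every $\tF$-convergent net of closed sets is Kuratowski--Painlev\'e convergent. Since the reverse implication always holds, this would make the two net convergences coincide, hence would make Kuratowski--Painlev\'e convergence topological (it would be achieved by the topology $\tF$); by the classical fact quoted in the paper just above this statement, that forces $X$ to be locally compact. So in the only interesting case---exactly one bad point $x_0$---your intermediate claim is false, and the failure sits precisely where your sketch is vaguest: covering $\overline{U}\setminus\{x_0\}$ by countably many compact sets gives, for each piece separately, that the net residually misses it, but along a net these countably many ``residually'' statements do not combine into a single residual statement for a whole neighborhood of $x_0$, which is what $x_0\notin\Ls A_\lm$ requires. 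The equality $\tK=\tF$ must instead be proved at the level of closed sets (every family stable under Kuratowski--Painlev\'e limits of nets is $\tF$-closed), which is a genuinely different and more delicate argument than comparing the convergences net by net.

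There is also a soft spot in the backward direction of (1): having $A_{\lm_n}$ miss a fixed finite set says nothing about $\Ls A_{\lm_n}$, so your construction as described only secures $A\subset\Li A_{\lm_n}$ and the Mrowka subsequence may have a strictly larger upper limit. The standard repair is to run the diagonalization over the countable base $\{U_k\}$ on both sides: for each $k$ with $U_k\cap A\neq\emptyset$ record a threshold beyond which $U_k\cap A_\lm\neq\emptyset$, and for each $k$ with $U_k\cap A_\lm=\emptyset$ residually record the corresponding threshold; choosing $\lm_n$ above the finitely many thresholds of index at most $n$ already produces a sequence Kuratowski--Painlev\'e converging to $A$ (no appeal to Mrowka needed), since every $x\notin A=\Ls A_\lm$ lies in some basic $U_k$ of the second kind. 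Your forward direction of (1) via an uncountable uniformly discrete set, and the case analysis in (3), are sound in outline, but the constructions witnessing failure when $X$ has two non-locally-compact points (forward directions of (2) and (3)) are only asserted and would need to be carried out.
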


\begin{thm}[cf.~\cite{B}*{Theorem 5.2.10}]
Let $X$ be a first countable Hausdorff space and let $A$ and $A_n$, $n=1,2,\ldots$, be a closed subsets of $X$. Then $\{A_n\}_{n=1}^\infty$ $\tF$-converges to $A$ if and only if $\{A_n\}_{n=1}^\infty$ Kuratowski-Painlev\'e converges to $A$. In particular, a limit of any $\tF$-convergent sequence is unique.
\end{thm}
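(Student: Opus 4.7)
The plan is to prove both implications. The direction ``Kuratowski--Painlev\'e convergence implies $\tF$-convergence'' is already recorded in the preliminary discussion as the general inclusion $\tF \subset \tK$, valid for any Hausdorff $X$. So I focus on the converse: assume $\{A_n\}_{n=1}^\infty$ $\tF$-converges to $A$, and establish $A \subset \Li A_n$ together with $\Ls A_n \subset A$.

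The inclusion $A \subset \Li A_n$ is essentially tautological. For $x \in A$ and any open $V \ni x$, the set $\{B \in \F(X) : B \cap V \neq \emptyset\}$ is a subbasic $\tF$-neighborhood of $A$, so $A_n \cap V \neq \emptyset$ for all large $n$. Running $V$ through a countable decreasing neighborhood basis of $x$, provided by first countability, gives $x \in \Li A_n$.

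The substantive step is $\Ls A_n \subset A$, proved by contradiction: suppose $x \in \Ls A_n \setminus A$. Since $X \setminus A$ is open and contains $x$, first countability yields a decreasing neighborhood basis $\{U_k\}_{k=1}^\infty$ of $x$ with $U_k \subset X \setminus A$ (shrink any given basis by intersecting with $X \setminus A$). Because $x$ is a cluster point, a diagonal extraction produces $n_1 < n_2 < \cdots$ and points $x_k \in A_{n_k} \cap U_k$; in particular $x_k \to x$ and $x_k \notin A$ for every $k$. The key construction is to set $K := \{x\} \cup \{x_k : k \geq 1\}$. In a Hausdorff space a convergent sequence together with its limit is compact, since an open cover must cover $x$ by some open set, which absorbs all but finitely many $x_k$. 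Since $K \cap A = \emptyset$, the set $\{B \in \F(X) : B \subset X \setminus K\}$ is a subbasic $\tF$-neighborhood of $A$, forcing $A_n \subset X \setminus K$ eventually and contradicting $x_k \in A_{n_k} \cap K$. Uniqueness of $\tF$-limits is then immediate from this equivalence: any two $\tF$-limits of $\{A_n\}$ must both coincide with the Kuratowski--Painlev\'e limit $\Li A_n = \Ls A_n$, which depends only on the sequence.

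The main obstacle is engineering the compact test set $K$, and both hypotheses are used essentially. First countability is needed to realize the cluster point $x$ by an actual sequence $x_k \to x$ and to slide the neighborhood basis into $X \setminus A$, while Hausdorffness is what makes the explicit set $K$ compact and keeps $x \notin A$ usable for the disjointness $K \cap A = \emptyset$. Dropping either hypothesis would leave the Fell topology too coarse to separate $A$ from the ``rival'' cluster-point enlargement, which is the general reason that $\tF$ and $\tK$ may fail to coincide outside the locally compact setting.
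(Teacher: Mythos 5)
Your proof is correct. The paper itself gives no proof of this statement---it is quoted from Beer's book \cite{B}*{Theorem 5.2.10}---and your argument is the standard one: the hit-sets of the Fell subbasis yield $A \subset \Li{A_n}$, while for a cluster point $x \in \Ls{A_n} \setminus A$ the compact test set $K = \{x\} \cup \{x_k : k \geq 1\}$, with $x_k \in A_{n_k}$ extracted via a countable neighborhood basis sitting inside $X \setminus A$, is eventually missed by $A_n$ under $\tF$-convergence, giving the contradiction. One minor quibble with your closing commentary: compactness of a convergent sequence together with its limit holds in any topological space (your own covering argument never actually invokes the Hausdorff axiom), so Hausdorffness is better viewed as a standing hypothesis for the Fell topology than as the reason $K$ is compact.
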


We now consider the space $\F(\X, \square)$. Theorem \ref{no_cpt_nbd} and some properties of $(\X, \square)$ imply that
\begin{itemize}
\item the Kuratowski-Painlev\'e convergence on $\F(\X, \square)$ is not topological,
\item $(\F(\X, \square), \tF)$ is compact and $T_1$, but neither Hausdorff nor sequential,
\item $(\F(\X, \square), \tK)$ is compact, $T_1$, and sequential,
\item $\tF \subsetneq \tK$,
\item $\tF$ and $\tK$ are sequentially equivalent on $\F(\X, \square)$.
\end{itemize}

\begin{rem}
\begin{enumerate}
\item $\tK$ is the sequentially modification of the Fell topology $\tF$. The sequentially modification of a topology is a stronger topology whose open sets consist of all sequentially open sets of the original topology.
\item Viewing $\tK$ from another angle, if $X$ is a metrizable space, then $\tK$ is the infimum of the Wijsman topologies $\tau_{\mathrm{W}_d}$, where $d$ runs over all compatible metrics on $X$. For a metric space $(X, d)$, the Wijsman topology $\tau_{\mathrm{W}_d}$ on $\F(X)$ is the weakest topology such that the function $A \mapsto d(x, A)$ on $\F(X)$ is continuous for every $x \in X$.
Here, if there exists a compatible metric $d$ on $X$ such that $\tK = \tau_{\mathrm{W}_d}$, then $X$ must be locally compact (see \cite{B85}). Over $(\X,\square)$, there is no minimum of the Wijsman topologies.
\item The authors do not know whether $(\F(\X, \square), \tK)$ is Hausdorff or not.
\end{enumerate}
\end{rem}

We now prove Theorem \ref{emb}.

\begin{proof}[Proof of Theorem \ref{emb}]
Since both $\Pi $ and $(\F(\X, \square), \tK)$ are sequential, it is sufficient to prove that, for any sequence $\{\cP_n\}_{n=1}^\infty$ of pyramids, the weak convergence and the $\tK$-convergence coincide with each other. Actually, these coincide with the Kuratowski-Painlev\'e convergence respectively. This completes the proof.
\end{proof}

The final topic in this paper starts with the following fact.

\begin{prop}[\cite{MMG}*{Corollary 6.15}]
Any pyramid is $\conc$-closed.
\end{prop}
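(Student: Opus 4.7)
The plan is to reduce $\conc$-closedness to $\square$-closedness via the Lipschitz-order structure of a pyramid, exploiting Proposition \ref{mmg6.2}. Since $\conc$ is a metric, it suffices to show sequential closedness: given any sequence $\{X_n\}_{n=1}^\infty \subset \cP$ that concentrates to an mm-space $X$, we must verify $X \in \cP$.

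By Proposition \ref{mmg6.2}, the concentration $X_n \to X$ furnishes a sequence $\{Y_n\}_{n=1}^\infty$ of mm-spaces such that $Y_n \prec X_n$ for every $n$ and $Y_n$ $\square$-converges to $X$. Since each $X_n$ lies in $\cP$ and $\cP$ is downward-closed under the Lipschitz order (condition (1) of Definition \ref{Py:dfn}), we conclude $Y_n \in \cP$ for every $n$. Then, because $\cP$ is $\square$-closed by condition (3) of Definition \ref{Py:dfn}, the $\square$-limit $X$ of $\{Y_n\}_{n=1}^\infty$ must also belong to $\cP$. This gives $X \in \cP$ as required.

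The only delicate point is the invocation of Proposition \ref{mmg6.2}: one needs that concentration of $\{X_n\}$ to $X$ always produces a Lipschitz-dominating sequence $Y_n \prec X_n$ with $\square$-convergence to $X$. Granted this, the proof is essentially a one-line chase through the defining properties of a pyramid. No finer analysis of the observable distance or of parameters is required, since all heavy lifting is packaged into Proposition \ref{mmg6.2}.
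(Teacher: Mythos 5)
Your proof is correct: reducing $\conc$-closedness to $\square$-closedness via Proposition \ref{mmg6.2} and the downward-closedness of a pyramid is exactly the right argument, and it is the same device this paper itself uses in the proof of Proposition \ref{box-conc} to show $\Ls_{\mathrm{conc}}\cP_n \subset \Ls_{\square}\cP_n$. The paper does not reprove the statement (it cites \cite{MMG}*{Corollary 6.15}), but your argument is a valid and essentially standard derivation from the tools already stated here.
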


This means that $\Pi \subset \F(\X, \conc) \subset \F(\X, \square)$. The following observation is interesting.

\begin{prop}\label{box-conc}
The Kuratowski-Painlev\'e convergence on $\F(\X, \square)$ and $\F(\X, \conc)$ coincide for any sequences of pyramids.
\end{prop}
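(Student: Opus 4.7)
\textbf{Proof plan for Proposition \ref{box-conc}.}
The strategy is to show that the four closed limits $\square$-$\Ls \cP_n$, $\conc$-$\Ls \cP_n$, $\square$-$\Li \cP_n$, $\conc$-$\Li \cP_n$ all coincide for any sequence $\{\cP_n\}_{n=1}^\infty$ of pyramids. Since $\X$ is a metric space in either topology, I can characterize these limits via the $\liminf$ and $\limsup$ of the distance functions $\square(x, \cP_n)$ and $\conc(x, \cP_n)$.

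One direction is essentially automatic. Because $\conc \leq \square$ on $\X$, the inequality $\conc(x, \cP_n) \leq \square(x, \cP_n)$ holds pointwise for every $x \in \X$ and every $n$, so $\liminf \conc(x, \cP_n) \leq \liminf \square(x, \cP_n)$ and likewise for $\limsup$. This immediately gives $\square$-$\Ls \cP_n \subset \conc$-$\Ls \cP_n$ and $\square$-$\Li \cP_n \subset \conc$-$\Li \cP_n$, and in fact this inclusion would hold for arbitrary closed sets, not just pyramids.

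The substantive content is the reverse inclusion, where I will use the downward-closure of pyramids under $\prec$ together with Proposition \ref{mmg6.2}. Suppose $x \in \conc$-$\Ls \cP_n$; then along some subsequence $\{n_k\}$, $\conc(x, \cP_{n_k}) \to 0$, so I may choose $Z_k \in \cP_{n_k}$ with $\conc(Z_k, x) \to 0$. By Proposition \ref{mmg6.2} applied to this concentrating sequence, there exist mm-spaces $Y_k \prec Z_k$ with $\square(Y_k, x) \to 0$. Since each $\cP_{n_k}$ is a pyramid and $Z_k \in \cP_{n_k}$, the Lipschitz-order domination forces $Y_k \in \cP_{n_k}$, whence $\square(x, \cP_{n_k}) \leq \square(x, Y_k) \to 0$ and therefore $x \in \square$-$\Ls \cP_n$. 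The analogous argument, using the full sequence instead of a subsequence, proves $\conc$-$\Li \cP_n \subset \square$-$\Li \cP_n$.

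Combining these with the symmetric statement for the $\square$-limits, all four closed limits coincide, so the two Kuratowski--Painlev\'e convergences on sequences of pyramids are identical. The only nontrivial step is the transition from $\conc$-convergence of the chosen $Z_k$'s to $\square$-convergence of a dominated sequence $Y_k$ lying inside $\cP_{n_k}$; this is exactly what Proposition \ref{mmg6.2} supplies, and the pyramid axiom (downward closure) makes the argument go through. There is no significant obstacle beyond locating and invoking these two facts in the right order.
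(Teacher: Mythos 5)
Your proposal is correct and follows essentially the same route as the paper: the easy inclusions come from $\conc\leq\square$, and the reverse inclusions come from extracting a concentrating sequence in the pyramids, applying Proposition \ref{mmg6.2} to get a $\square$-convergent dominated sequence, and using the downward closure of pyramids under $\prec$. No substantive differences.
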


\begin{proof}
Let $\{\cP_n\}_{n=1}^\infty$ be a sequence of pyramids. We denote by $\Ls_{\square} \cP_n$ and $\Li_{\square} \cP_n$ the upper closed limit and the lower closed limit with respect to the box distance function $\square$. Similarly, we denote by $\Ls_{\mathrm{conc}} \cP_n$ and $\Li_{\mathrm{conc}} \cP_n$ them with respect to the observable distance function $\conc$.
Since $\conc \leq \square$,
\[
\Ls_{\square} \cP_n \subset \Ls_{\mathrm{conc}} \cP_n \quad \text{ and } \quad \Li_{\square} \cP_n \subset \Li_{\mathrm{conc}} \cP_n
\]
are trivial. We prove $\Ls_{\mathrm{conc}} \cP_n \subset \Ls_{\square} \cP_n$. Let $X \in \Ls_{\mathrm{conc}} \cP_n$. Then there exist a subsequence $\{n_i\}_{i=1}^\infty$ of $\{n\}$ and mm-spaces $X_i \in \cP_{n_i}$, $i=1,2,\ldots$ such that
$X_i$ concentrates to $X$ as $i \to \infty$. By Proposition \ref{mmg6.2}, we find a sequence $\{Y_i\}_{i=1}^\infty$ of mm-spaces $\square$-converging to $X$ with $Y_i \prec X_i$ for every $i$. Since $\cP_{n_i}$ is a pyramid, we have $Y_i \in \cP_{n_i}$ and hence
\[
\liminf_{n\to\infty} \square(X, \cP_n) \leq \liminf_{i\to\infty} \square(X, \cP_{n_i}) \leq \lim_{i\to\infty} \square(X, Y_i) = 0,
\]
which implies that $X \in \Ls_{\square} \cP_n$. We obtain $\Ls_{\mathrm{conc}} \cP_n \subset \Ls_{\square} \cP_n$. Similarly, $\Li_{\mathrm{conc}} \cP_n \subset \Li_{\square} \cP_n$ is obtained. The proof is completed.
\end{proof}

Proposition \ref{box-conc} leads to another embedding as follows.

\begin{thm}\label{emb_conc}
The inclusion map $\Pi \ni \cP \mapsto \cP \in (\F(\X, \conc), \tK)$ is a topological embedding map.
\end{thm}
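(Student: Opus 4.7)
My plan is to mirror the proof of Theorem~\ref{emb}, replacing $\square$ by $\conc$ on the hyperspace side and using Proposition~\ref{box-conc} as the bridge between the two. The first observation is that $\Pi$ is metrizable (Theorem~\ref{Py:thm}) and that $(\F(\X,\conc),\tK)$ is sequential because $(\X,\conc)$ is separable metric (by \cite{BRL}*{Theorem 3.12}). Since both ambient spaces are sequential and the inclusion is trivially injective, to prove it is a topological embedding it suffices to show that weak convergence in $\Pi$ and $\tK$-convergence in $(\F(\X,\conc),\tK)$ coincide along arbitrary sequences of pyramids.

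For a sequence $\{\cP_n\}_{n=1}^\infty$ of pyramids and a pyramid $\cP$, I would establish the chain
\begin{align*}
\cP_n \to \cP \text{ weakly} &\Longleftrightarrow \cP_n \to \cP \text{ in Kuratowski--Painlev\'e sense in } \F(\X,\square) \\
&\Longleftrightarrow \cP_n \to \cP \text{ in Kuratowski--Painlev\'e sense in } \F(\X,\conc) \\
&\Longleftrightarrow \cP_n \to \cP \text{ in } \tK \text{ on } \F(\X,\conc).
\end{align*}
The first equivalence is the identification already used in Theorem~\ref{emb}, which unpacks weak convergence through the formulas $\Ls \cP_n = \{X \mid \liminf_n \square(X,\cP_n) = 0\}$ and $\Li \cP_n = \{X \mid \limsup_n \square(X,\cP_n) = 0\}$. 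The second equivalence is exactly Proposition~\ref{box-conc}. The third equivalence is the standard fact that over a separable metric base, sequential $\tK$-convergence agrees with Kuratowski--Painlev\'e convergence.

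The step most likely to require care is the last equivalence, and it is the same one handled implicitly in the proof of Theorem~\ref{emb}. By the construction of $\tK$ as the topologization of Kuratowski--Painlev\'e convergence, any Kuratowski--Painlev\'e convergent sequence is $\tK$-convergent. For the reverse direction along a sequence, I would invoke Mrowka's theorem: every subsequence of $\{\cP_n\}$ admits a further Kuratowski--Painlev\'e convergent subsequence, and its Kuratowski--Painlev\'e limit must coincide with the $\tK$-limit $\cP$ by the $T_1$ property of $\tK$; hence the full sequence Kuratowski--Painlev\'e converges to $\cP$. Chaining the three equivalences then completes the proof exactly as for Theorem~\ref{emb}.
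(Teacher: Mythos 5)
Your proposal follows the paper's intended route exactly: the paper derives Theorem~\ref{emb_conc} by combining Proposition~\ref{box-conc} with the argument already given for Theorem~\ref{emb}, namely sequentiality of both sides plus the identification of weak convergence, Kuratowski--Painlev\'e convergence, and $\tK$-convergence along sequences of pyramids. The one point to repair is your justification of the final equivalence: in a $T_1$ but non-Hausdorff space a sequence may converge to several distinct points (e.g.\ the cofinite topology), so the $T_1$ property of $\tK$ alone does not force the Kuratowski--Painlev\'e limit of the extracted subsequence to equal the $\tK$-limit $\cP$. The uniqueness you need is instead supplied by the cited fact (Beer, Theorem~5.2.10) that over a first countable Hausdorff base --- here the metric space $(\X,\conc)$ --- sequential $\tF$-convergence coincides with Kuratowski--Painlev\'e convergence and has unique limits; since $\tF \subset \tK$, any $\tK$-convergent sequence is $\tF$-convergent to the same limit, hence Kuratowski--Painlev\'e convergent to it, which gives the reverse implication directly and makes the detour through Mrowka's theorem unnecessary. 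With that one substitution your chain of equivalences is precisely the paper's (largely implicit) proof.
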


Note that $(\F(\X, \conc), \tK)$ is also compact, $T_1$, and sequential. This $\tK$ is just the topologization of the Kuratowski-Painlev\'e convergence over $(\X, \conc)$ which is unrelated to $(\F(\X, \square), \tK)$ by definition.

\section{Further questions}
The following question remains.

\begin{qst}
Is $\X$ with the concentration topology $\sigma$-compact? Equivalently, is  $\Pi\setminus\X$ a $G_\delta$ subset of $\Pi$?
\end{qst}
If this question is true, then $\Pi\setminus \X$ is a Baire space.

\begin{qst}
Are $(\X, \conc)$ and $(\Pi,\rho)$ geodesic spaces?
\end{qst}

\begin{qst}
Is $(\F(\X, \square), \tK)$ Hausdorff (metrizable)?
\end{qst}
\begin{qst}
Is there a relation between $(\F(\X, \square), \tK)$ and $(\F(\X, \conc), \tK)$?
\end{qst}
\begin{qst}
Can $\Pi$ be embedded into $(\F(\X, \square), \tF)$ or $(\F(\X, \square), \tau_{\mathrm{W}_\square})$ topologically?
\end{qst}

\subsection*{Acknowledgements}
The authors thank Professors Takumi Yokota and Yoshito Ishiki for their valuable comments.

\begin{bibdiv}
\begin{biblist}

\bib{BCZ}{article}{
   author={Banakh, T.},
   author={Cauty, R.},
   author={Zarichnyi, M.},
   title={Open problems in infinite-dimensional topology},
   book={
     title={Open Problems in Topology, II},
     editor={Pearl, E.},
     publisher={Elsevier},
     date={2007},
     },
   pages={601--624},
}

\bib{B85}{article}{
   author={Beer, Gerald},
   title={On convergence of closed sets in a metric space and distance
   functions},
   journal={Bull. Austral. Math. Soc.},
   volume={31},
   date={1985},
   number={3},
   pages={421--432},
   issn={0004-9727},
}

\bib{B}{book}{
   author={Beer, Gerald},
   title={Topologies on closed and closed convex sets},
   series={Mathematics and its Applications},
   volume={268},
   publisher={Kluwer Academic Publishers Group, Dordrecht},
   date={1993},
   pages={xii+340},
   isbn={0-7923-2531-1},
}

\bib{BRL}{article}{
   author={Beer, Gerald},
   author={Rodr\'{\i}guez-L\'{o}pez, Jes\'{u}s},
   title={Topologies associated with Kuratowski-Painlev\'{e} convergence of
   closed sets},
   journal={J. Convex Anal.},
   volume={17},
   date={2010},
   number={3-4},
   pages={805--826},
}

\bib{Bori}{article}{
   author={Borisova, O. B.},
   title={Noncompactness of segments in the Gromov-Hausdorff space},
   note={Translation of Vestnik Moskov. Univ. Ser. I Mat. Mekh. {\bf 2021},
   no. 5, 3--8},
   journal={Moscow Univ. Math. Bull.},
   volume={76},
   date={2021},
   number={5},
   pages={187--192},
   issn={0027-1322},
}

\bib{BBI}{book}{
   author={Burago, Dmitri},
   author={Burago, Yuri},
   author={Ivanov, Sergei},
   title={A course in metric geometry},
   series={Graduate Studies in Mathematics},
   volume={33},
   publisher={American Mathematical Society, Providence, RI},
   date={2001},
   pages={xiv+415},
   isbn={0-8218-2129-6},
}

\bib{E}{book}{
   author={Engelking, Ryszard},
   title={General topology},
   series={Sigma Series in Pure Mathematics},
   volume={6},
   edition={2},
   note={Translated from the Polish by the author},
   publisher={Heldermann Verlag, Berlin},
   date={1989},
   pages={viii+529},
   isbn={3-88538-006-4},
}

\bib{F}{book}{
   author={Fremlin, D. H.},
   title={Measure theory. Vol. 4},
   note={Topological measure spaces. Part I, II;
   Corrected second printing of the 2003 original},
   publisher={Torres Fremlin, Colchester},
   date={2006},
   pages={Part I: 528 pp.; Part II: 439+19 pp. (errata)},
   isbn={0-9538129-4-4},
}

\bib{GPW}{article}{
   author={Greven, Andreas},
   author={Pfaffelhuber, Peter},
   author={Winter, Anita},
   title={Convergence in distribution of random metric measure spaces
   {\rm (}$\Lambda$-coalescent measure trees{\rm )}},
   journal={Probab. Theory Related Fields},
   volume={145},
   date={2009},
   number={1-2},
   pages={285--322},
   issn={0178-8051},
}

\bib{GPW2}{article}{
   author={Greven, Andreas},
   author={Pfaffelhuber, Peter},
   author={Winter, Anita},
   title={Tree-valued resampling dynamics martingale problems and
   applications},
   journal={Probab. Theory Related Fields},
   volume={155},
   date={2013},
   number={3-4},
   pages={789--838},
   issn={0178-8051},
}

\bib{Grmv}{book}{
   author={Gromov, Misha},
   title={Metric structures for Riemannian and non-Riemannian spaces},
   series={Modern Birkh\"auser Classics},
   edition={Reprint of the 2001 English edition},
   publisher={Birkh\"auser Boston, Inc., Boston, MA},
   date={2007},
   pages={xx+585},
   isbn={978-0-8176-4582-3},
   isbn={0-8176-4582-9},
}

\bib{I}{article}{
   author={Ishiki, Yoshito},
   title={Branching geodesics of the Gromov-Hausdorff distance},
   journal={Anal. Geom. Metr. Spaces},
   volume={10},
   date={2022},
   number={1},
   pages={109--128},
}

\bib{prod}{article}{
   author={Kazukawa, Daisuke},
   title={Concentration of product spaces},
   journal={Anal. Geom. Metr. Spaces},
   volume={9},
   date={2021},
   number={1},
   pages={186--218},
}

\bib{comts}{article}{
   author={Kazukawa, Daisuke},
   title={Convergence of metric transformed spaces},
   journal={Israel J. Math.},
   volume={252},
   date={2022},
   number={1},
   pages={243--290},
   issn={0021-2172},
}

\bib{bundle}{article}{
   author={Kazukawa, Daisuke},
   author={Nakajima, Hiroki},
   author={Shioya, Takashi},
   title={Principal bundle structure of the space of metric measure spaces},
   note={preprint},
}

\bib{KY}{article}{
   author={Kazukawa, Daisuke},
   author={Yokota, Takumi},
   title={Boundedness of precompact sets of metric measure spaces},
   journal={Geom. Dedicata},
   volume={215},
   date={2021},
   pages={229--242},
   issn={0046-5755},
}

\bib{Led}{book}{
   author={Ledoux, Michel},
   title={The concentration of measure phenomenon},
   series={Mathematical Surveys and Monographs},
   volume={89},
   publisher={American Mathematical Society, Providence, RI},
   date={2001},
   pages={x+181},
   isbn={0-8218-2864-9},
}

\bib{Levy}{book}{
   author={L\'{e}vy, Paul},
   title={Probl\`emes concrets d'analyse fonctionnelle. Avec un compl\'{e}ment sur
   les fonctionnelles analytiques par F. Pellegrino},
   language={French},
   note={2d ed},
   publisher={Gauthier-Villars, Paris},
   date={1951},
   pages={xiv+484},
}

\bib{Lohr}{article}{
   author={L\"{o}hr, Wolfgang},
   title={Equivalence of Gromov-Prohorov- and Gromov's
   $\underline\square_\lambda$-metric on the space of metric measure spaces},
   journal={Electron. Commun. Probab.},
   volume={18},
   date={2013},
   pages={no. 17, 10},
}

\bib{MW}{article}{
   author={M\'{e}moli, Facundo},
   author={Wan, Zhengchao},
   title={Characterization of Gromov-type geodesics},
   journal={Differential Geom. Appl.},
   volume={88},
   date={2023},
   pages={Paper No. 102006},
   issn={0926-2245},
}

\bib{VMil}{article}{
   author={Milman, V. D.},
   title={The heritage of P. L\'{e}vy in geometrical functional analysis},
   note={Colloque Paul L\'{e}vy sur les Processus Stochastiques (Palaiseau,
   1987)},
   journal={Ast\'{e}risque},
   number={157-158},
   date={1988},
   pages={273--301},
   issn={0303-1179},
}

\bib{N}{article}{
   author={Nakajima, Hiroki},
   title={Box distance and observable distance via optimal transport},
   note={arXiv:2204.04893},
}

\bib{OS}{article}{
   author={Ozawa, Ryunosuke},
   author={Shioya, Takashi},
   title={Limit formulas for metric measure invariants and phase transition
   property},
   journal={Math. Z.},
   volume={280},
   date={2015},
   number={3-4},
   pages={759--782},
   issn={0025-5874},
}

\bib{MMG}{book}{
   author={Shioya, Takashi},
   title={Metric measure geometry},
   series={IRMA Lectures in Mathematics and Theoretical Physics},
   volume={25},
   note={Gromov's theory of convergence and concentration of metrics and
   measures},
   publisher={EMS Publishing House, Z\"urich},
   date={2016},
   pages={xi+182},
   isbn={978-3-03719-158-3},
}

\bib{St}{article}{
   author={Sturm, Karl-Theodor},
   title={The space of spaces: curvature bounds and gradient flows on the space of metric measure spaces},
   note={arXiv:1208.0434v2},
}

\bib{V}{book}{
   author={Villani, C\'{e}dric},
   title={Topics in optimal transportation},
   series={Graduate Studies in Mathematics},
   volume={58},
   publisher={American Mathematical Society, Providence, RI},
   date={2003},
   pages={xvi+370},
   isbn={0-8218-3312-X},
}

\bib{W}{book}{
   author={Willard, Stephen},
   title={General topology},
   publisher={Addison-Wesley Publishing Co., Reading, Mass.-London-Don
   Mills, Ont.},
   date={1970},
   pages={xii+369},
}

\end{biblist}
\end{bibdiv}
\end{document}